\tikzset{>=latex}
\tikzset{>=latex}
\newcommand{\norm}[1]{\big\lVert#1\big\rVert}
\def\de{\partial}
\def\disp{\displaystyle}
\def\R{\mathbb{R}}
\def\Z{\mathbb{Z}}
\def\N{\mathbb{N}}
\def\vv{\mathcal{V}}
\def\uu{\mathcal{U}}
\def\ff{\mathcal{F}}
\def\qq{\mathcal{Q}}
\def\ncar{N}
\def\umax{u^{\mathrm{max}}}
\def\vmax{V^{\mathrm{max}}}
\def\rhomax{\rho^{\mathrm{max}}}
\def\rhof{\rho_{f}}
\def\wl{w_{L}}
\def\wr{w_{R}}
\def\nn{\widetilde N}
\def\tot{\mathrm{tot}}
\def\sens{\mathrm{sens}}
\def\cuno{C^{1}}
\newcommand{\bp}{\textbf{p}}
\def\dx{\Delta x}
\def\dy{\Delta y}
\def\dt{\Delta t}
\def\nx{N_{x}}
\def\nt{N_{t}}
\def\kmh{\mathrm{km/h}}
\def\km{\mathrm{km}}
\def\meter{\mathrm{m}}
\def\vehkm{\mathrm{veh/km}}
\def\second{\mathrm{s}}
\def\myhour{\mathrm{h}}
\def\mysecond{\mathrm{s}}
\def\mymeter{\mathrm{m}}
\def\mymin{\mathrm{min}}
\def\per{/}
\def\km{\mathrm{km}}
\def\myhour{\mathrm{h}}
\def\mygram{\mathrm{g}}
\def\nox{\mathrm{NO_{x}}}
\numberwithin{equation}{section}
\newtheorem{remark}{Remark}
\newtheorem{proposition}{Proposition}
\numberwithin{remark}{section}
\newcommand{\mylabel}[2]{#2\def\@currentlabel{#2}\label{#1}}
\title{\Large\textbf{Estimate of traffic emissions through multiscale \\second order models 
with heterogeneous data}}
\author{\normalsize{Caterina Balzotti}\thanks{SISSA - Scuola Internazionale Superiore di Studi Avanzati, Mathematics Area, mathLab, Via Bonomea, 265, 34136, Trieste, Italy (\href{mailto:cbalzott@sissa.it}{cbalzott@sissa.it})
}
\and {\setcounter{footnote}{3}\normalsize{Maya Briani}\thanks{Istituto per le Applicazioni del Calcolo, 
Consiglio Nazionale delle Ricerche, Via dei Taurini 19, 00185, Rome, Italy (\href{mailto:m.briani@iac.cnr.it}{m.briani@iac.cnr.it})}
}
}
\date{\vspace{-0.5cm}}
\begin{document}

\maketitle

\begin{abstract}
In this paper we propose a multiscale traffic model, based on the family of Generic Second Order Models, which integrates multiple trajectory data into the velocity function. This combination of a second order macroscopic model with microscopic information allows us to reproduce significant variations in speed and acceleration that strongly influence traffic emissions. We obtain accurate approximations even with a few trajectory data. The proposed approach is therefore a computationally efficient and highly accurate tool for calculating macroscopic traffic quantities and estimating emissions. 
\end{abstract}

\begin{description}
\item[\textbf{Keywords.}] Second order traffic models; heterogeneous data; emissions; road networks. 
\item[\textbf{Mathematics Subject Classification.}]  35L65, 35F25, 90B20, 62P12.
\end{description}

\section{Introduction}\label{sec:intro}
In this paper, we focus on the development of models specifically designed to take advantage of the availability of heterogeneous data. By heterogeneous data we mean not only data coming from different sources, but especially data coming from different scales of observation and different modes of monitoring. We refer in particular to Lagrangian data, which provide information on the trajectories followed by vehicles, and to Eulerian data, which measure the transit of cars from fixed locations. In the case of vehicles, the Lagrangian data are typically GPS data (i.e.\ trajectory data, from which the instantaneous speed is derived), while the Eulerian data come from fixed sensors placed along the road, capable of counting cars and measuring their speed. 

Alongside this analysis we consider the problem of estimating emissions from vehicular traffic on complex networks.
The continuous traffic growth is in fact associated with negative environmental effects, which are related to both air quality and climate change. In order to assess the impact of traffic emissions on the environment and human health, an accurate estimate of their rates and location is required.

\subsection{Related work}
In this article we mainly follow the approach of \cite{colombo2016M3AS}, where the authors propose a new traffic model that integrates position and velocity information of a single tracked vehicle into the velocity function of the Lighthill-Whitham-Richards (LWR) model \cite{LighthillWhitham1955, Richards1956}. LWR is a first order (i.e.\ a single equation) model that describes traffic dynamics in a road through the density of vehicles and their average speed. We extend the ideas of \cite{colombo2016M3AS} to the case of several vehicle trajectory data and to the family of Generic Second Order Models (GSOM) introduced in \cite{lebacque2007TTT}. GSOM encompasses the majority of second order (i.e.\ two equations) traffic models that, unlike first order ones, are able to reproduce bounded traffic accelerations \cite{lebacque2002TT}. Our choice leads to a multiscale type model that exploits the best features of the macroscopic and microscopic approaches. Multiscale models has been already considered in several papers. We refer, for instance, to \cite{colombo2015M3AS, garavello2017NoDEA} where the macroscopic LWR model is merged with the classical microscopic \emph{follow the leader} model. In \cite{cristiani2019DCDSB} the authors propose a multiscale approach obtained by coupling a first order macroscopic model with a second order microscopic one that is used only under specific traffic conditions.
The interested reader can find other examples of multiscale models in \cite{bourrel2003TRR, garavello2013NHM, herty2012KRM, lattanzio2010M3AS, leclercq2007TRB}. Hereafter we cite some works closer to our goals.
In \cite{chalons2017IFB}, the LWR model is combined with an ordinary differential equation representing the trajectory of a slow vehicle acting as a moving bottleneck. This approach can be considered as a way to include real trajectory data in a macroscopic traffic model and is therefore comparable with our scopes.
We also refer to \cite{CanepaClaudel2017} and the references therein, to mention a robust method of traffic estimation involving mixed fixed and mobile sensor data using the Hamilton-Jacobi equations. 

Once the traffic state variables have been estimated, they can be used as  input for the so-called emission models, which evaluate the mass of pollutants emitted. In this respect, in \cite{Jamshidnejad2017TRC,zegeye2013elsevier} the authors provide general frameworks to integrate macroscopic traffic flow models and microscopic emission models. In \cite{alvarez2017JCAM,alvarez2018MCRF} the traffic modeling relies on the LWR model and a reaction-diffusion model describes the spread of carbon monoxide in the air with a source term associated with traffic dynamics. In the recent work \cite{garcia2020AX} the authors analyze a reaction-diffusion model, based on LWR traffic dynamics, to control nitrogen oxides emissions. In \cite{bayen2014} the authors suggest a new methodology to estimate in real-time the emission rates of pollutants and describe their diffusion in air. Furthermore, in \cite{balzotti2021DCDS} the authors propose a computational tool to estimate pollutant emissions due to vehicular traffic using second order traffic models. This approach approximates emissions well when a large amount of data is available to feed the traffic model. The present work extends this method by including microscopic data in the second order macroscopic traffic model, and provides a good estimate of pollutant emissions even when few data is available.

\subsection{Background and motivations}
In order to exploit as much information as possible, we consider traffic models that take into account different and heterogeneous traffic data available along a road. 
A single source of data is generally not sufficient to estimate and forecast traffic volumes on a road.
In Figure \ref{fig:esDati} on the left we provide an example of the partial information coming from GPS data; speed is observed for a limited number of vehicles and only at specific points in time and space. At the same time, in Figure \ref{fig:esDati} on the right we show an example of flux data coming from a fixed sensor; 
this type of data is not enough to correctly calculate traffic quantities such as densities, as it only provides average speed values or noisy, time-sampled flow information.
\begin{figure}[h!]
\centering
\includegraphics[width=0.3\columnwidth]{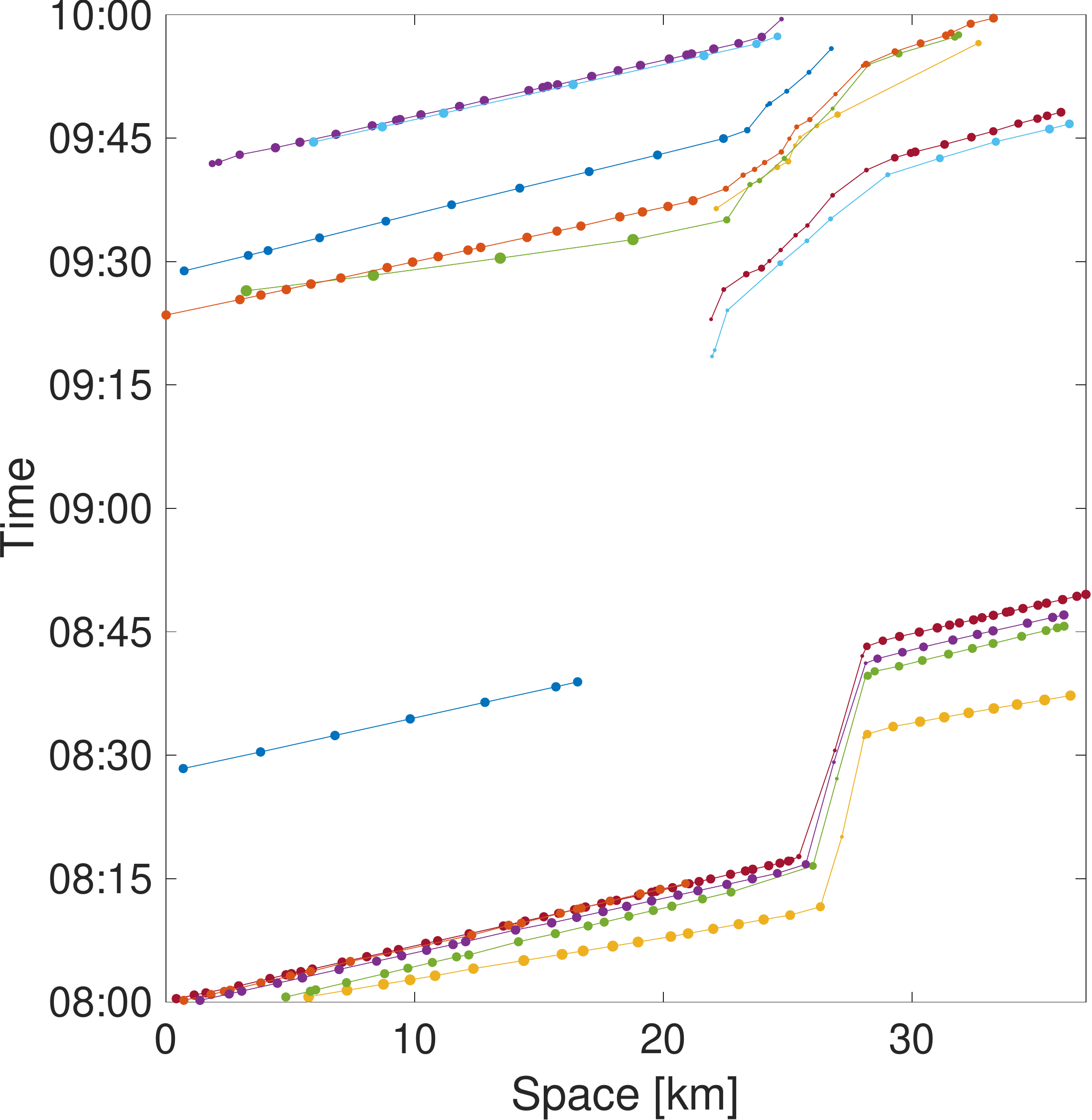}\quad
\includegraphics[width=0.305\columnwidth]{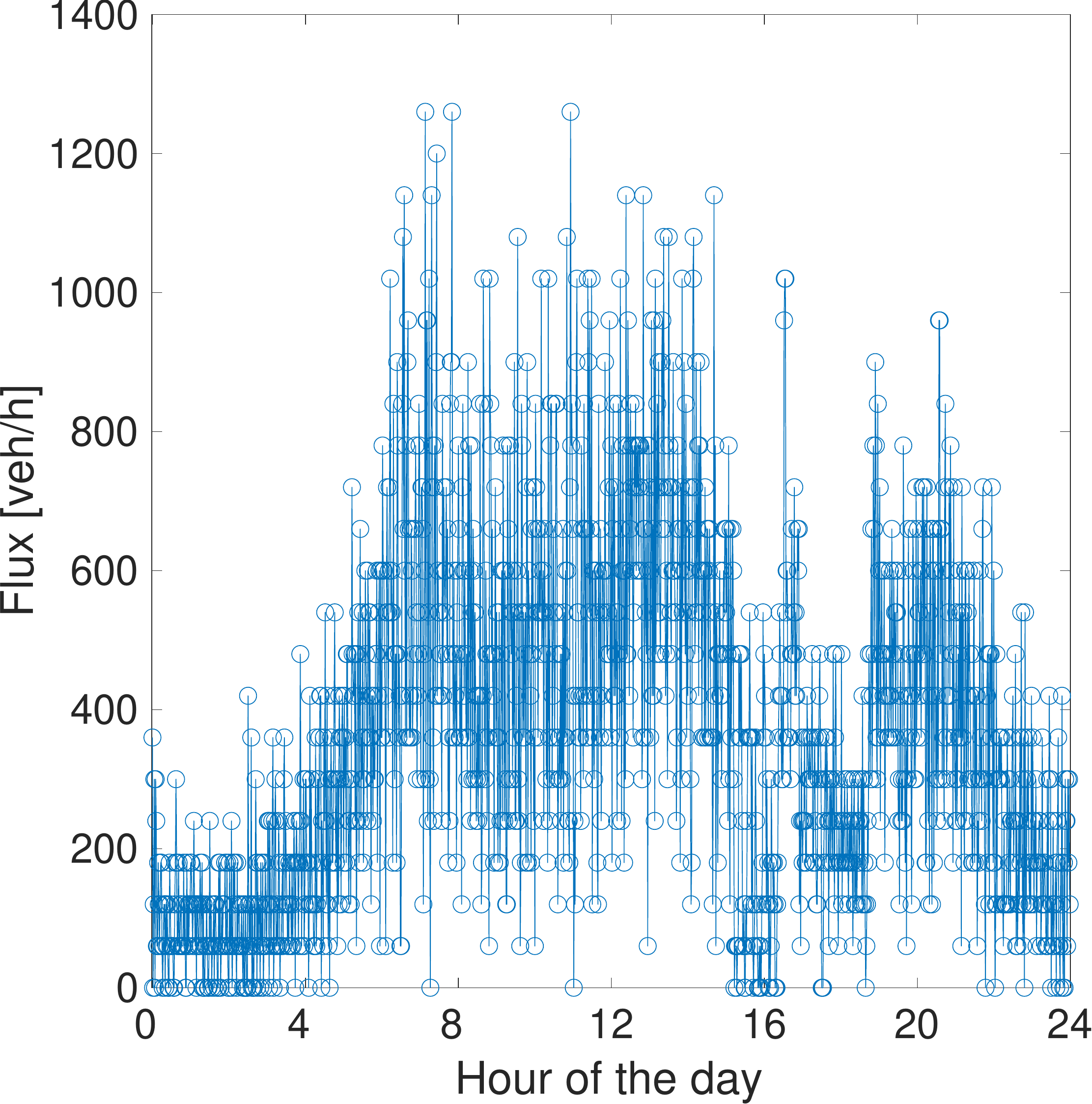}
\caption{A sample of a GPS trajectory dataset (left) and of flux data coming from a fixed sensor (right).
The data was provided by Autovie Venete S.p.A and are not publicly available.}
\label{fig:esDati}
\end{figure}

To deal with these two types of data, we have followed the approach proposed in \cite{colombo2016M3AS}, which is an effective and efficient way of coupling macroscopic and microscopic variables to estimate traffic volumes. 
This approach allows us to perform data fusion directly at the model level: Eulerian flow data measured by stationary sensors is used as boundary condition for the differential equations describing the traffic dynamics,
while the Lagrangian data from the GPS sensor is used to correct macroscopic quantities in real time. 
More precisely, let $p=p(t)$ be the position of a single vehicle along the road at time $t$. Starting from the conservation law of vehicles density $\rho=\rho(x,t)$,
\begin{equation}\label{eq:model1ord}
\de_{t}\rho+\de_{x}(\rho \uu) = 0,
\end{equation}
the measured trajectory $p=p(t)$ is directly encoded in the time- and space-dependent speed law $\uu$ as 
\begin{equation*}
	\uu(x,t,\rho;p) = 
	\disp\chi(x-p(t))\frac{2 \dot{p}(t)u(\rho)}{\dot{p}(t)+u(\rho)}+\big(1-\chi(x-p(t))\big)u(\rho),
\end{equation*}
if $\dot{p}(t)+u(\rho)\neq0$, otherwise $\uu=0$.
Here, $u=u(\rho)$ is a given speed function depending only on the density and $\chi(\xi)$ is a smooth, non-negative function such that $\chi(\xi)=1$ for $|\xi|\leq \ell$ and $\chi(\xi)=0$ when $|\xi|\geq L$, for two fixed constant $\ell,L$ with $\ell<L$. Thus, when the point $(x,t)$ is sufficiently far from the position of the vehicle $p=p(t)$ ($\chi=0$), then $\uu$ is defined by the velocity function $u(\rho)$ as in the LWR model, otherwise it is given by the harmonic mean between $u$ and the velocity $\dot p(t)$ of the vehicle.


We extend this idea to the case of more trajectory data available. We propose two ways to incorporate the Lagrangian data of $N$ vehicles. In the first one, the velocity function $\uu$ takes into account only the speed $\dot p_k(t)$ of the $\kappa$-vehicle closest to the point $x$ at time $t$; in the second one, it depends on the mean speed of the set of vehicles closest to point $x$. The difference between the two approaches only shows up when several vehicles with different speeds are close to each other. In both cases, the resulting flow function $\ff=\rho\uu$ is strictly concave with respect to $\rho$. We do not investigate the model from an analytical point of view.  We need $\uu$ to inherit reasonable properties of $u$ and $\dot p$, such as positivity and boundedness, and we still refer to \cite{colombo2016M3AS} for analytical aspects.

In absence of real trajectory data, the macroscopic traffic model introduced above can be coupled with a microscopic one.
Lagrangian data, in fact, can be generated from a microscopic model and then included in the velocity function $\uu$ as previously described. One therefore obtains a multiscale type model capable of describing some traffic phenomena generally captured only by a detailed, microscopic description of the dynamics.

The proposed approaches can be easily applied to other traffic models. We can thus incorporate the information from $N$ Lagrangian data into the family of Generic Second Order Model (GSOM) \cite{lebacque2007TTT}
described by 
 \begin{equation*}
	\begin{split}
		&\begin{cases}
			\de_t\rho+\de_x(\rho \vv) = 0\\
			\de_t w+\vv\de_x w = 0,
		\end{cases}
	\end{split}
	\label{eq:GSOM}
\end{equation*}
where $\rho(x,t)$ is the density of vehicles, $w(x,t)$ is a vehicle/driver property or invariant, which is conserved along trajectories, and $\vv$ is the velocity field. 
By taking into account the Lagrangian information,  $\vv=\vv(x,t,\rho,w;\bp)$ is defined as
\begin{align*}
	\vv(x,t,\rho,w;\bp) = 
	\disp\chi(x-p_{\kappa}(t))\frac{2 \dot{p}_{\kappa}(t)v(\rho,w)}{\dot{p}_{\kappa}(t)+v(\rho,w)}+\big(1-\chi(x-p_{\kappa}(t))\big)v(\rho,w)
\end{align*}
if $\dot{p}_{\kappa}(t)+v(\rho,w)\neq0$, otherwise $\vv=0$. Here $\bp=(p_1(t),\ldots,p_N(t))$ is the vector of positions of the $N$ vehicles and $\kappa=\kappa(x,t)$ is the index of the closest vehicle to point $x$ at time $t$. The function $v=v(\rho,w)$ is a given analytical function which describes the macroscopic velocity field in the \textit{native} (without tracking vehicles) second order model.

\subsection{Emissions estimate} The use of a second order model leads to good approximations of the acceleration of vehicles and consequently to the estimate of traffic emissions at ground level, that is one of the main goal of our work. Most emission models are based on both vehicle speed and acceleration, see for instance \cite{Barth2000,Smit2010} and references therein. Here we explore the use of two types of formula which compute the emissions associated with the motion of vehicles. The models we consider have been introduced in \cite{panis2006elsevier} and \cite{ahn1999TRB}, respectively. 
In both formulas the contribution of the Lagrangian data is incorporated in the terms of speed and acceleration evaluated by the traffic model. Specifically, by computing the acceleration function $a=a(x,t)$ as the time material derivative of the new function $\vv$ introduced above it directly depends on the acceleration $\ddot{p}_{\kappa}(t)$ of the nearest vehicle. 

We compare the two emission models and we show how the integration of real data affects their results. We observe that the integration of trajectory data into the macroscopic traffic model increases the order of accuracy of the emission estimate, even when there are few data available. In particular, the formula proposed in \cite{panis2006elsevier} gives better results.

We conclude our study with a \textit{real-life application} using trajectory and fixed sensors
data provided by Autovie Venete S.p.A. on the Italian A4 (Trieste-Venice) highway.
With this test 
we link heterogeneous traffic source data to emission estimates along a road network, and at the same time we provide an approximation of the source term that feeds air pollutant diffusion and chemical reaction models.  
The numerical results show how the GPS data influences the solution of the traffic model and gives good reproductions of the emission peaks at the macroscopic scale.

\subsection{Main goal}
In summary, we propose a second order traffic model that returns macroscopic traffic quantities by incorporating microscopic information. Microscopic trajectories are included in the definition of the velocity field in order to perturb the velocity and acceleration values at the macroscopic level. 
This methodology combines the computational efficiency of a macroscopic model with the accuracy of a microscopic representation. 
This makes it particularly suitable as an input for estimating the mass of emitted pollutants when an aggregate description is required.
With a few Lagrangian trajectories, it is in fact possible to reproduce significant emission variations at the macroscopic scale. 
The procedure is very flexible and can be used with real measurements or with vehicle trajectories generated by Lagrangian models.

\subsection{Paper organization}
In Section \ref{sec:Ndata} we propose two possible extensions of the first order LWR model to integrate Lagrangian data from $N$ vehicles. A numerical test is proposed to show the differences between the two approaches. In Section \ref{sec:modello2}, we apply the ideas given in the previous section to the family of GSOM. Then, we compute the acceleration as the material derivative of the velocity function, making explicit its dependence on the acceleration of the single vehicles. In Section \ref{sec:emiModels} we describe two models to estimate the emissions produced by vehicular traffic.
In Section \ref{sec:numtraffic} and \ref{sec:emissioni}, we propose numerical tests to show how the integration of Lagrangian data into the GSOM impacts on the traffic dynamic and on the estimate of emissions, respectively. We conclude with Section \ref{sec:autovieData}, which describes the use of GPS and fixed sensors data provided by Autovie Venete S.p.A. on a portion of the Italian highway network. Finally, in Appendix \ref{sec:proof} we report two technical proofs.

\section{A macroscopic first order model embedding Lagrangian data}\label{sec:Ndata}

Assume to know the trajectory of $N$ vehicles, and let $\bp =(p_1(t),\ldots,p_N(t))$ be the vector of their positions at time $t$. Following \cite{colombo2016M3AS}, we assume $p_i=p_i(t)$, to be continuous and smooth functions such that $\dot{p}_i\geq 0$ for $i=1,\cdots,N$ and a.e.\ $t\in\R^+$. We propose two approaches to incorporate information from the vehicles into the velocity function of the conservation law \eqref{eq:model1ord}.

\begin{description}[before={\renewcommand\makelabel[1]{##1}}]
\item[\mylabel{vicina}{\textnormal{(CV)}}] \textbf{Closest Vehicle}.
We define $\uu$ in \eqref{eq:model1ord} as
\begin{align}\label{eq:utrue}
	\uu(x,t,\rho;\bp) & = \begin{cases}\disp
	\chi(x-p_{\kappa}(t))\frac{2 \dot{p}_{\kappa}(t)u(\rho)}{\dot{p}_{\kappa}(t)+u(\rho)}+\big(1-\chi(x-p_{\kappa}(t))\big)u(\rho) &\text{if $(\dot{p}_{\kappa},u)\neq(0,0)$}\\
		0 &\text{otherwise,}
	\end{cases}
\end{align}
where $p_\kappa(t)$ is the position of the $\kappa$-vehicle closest to point $x$. With a slight abuse of notation, we have 
\begin{equation}\label{eq:kvicino}
	\kappa = \kappa(x,t) = \arg\min\{|x-p_{i}(t)|, i=1,\dots N\}. 
\end{equation}

\item[\mylabel{media}{\textnormal{(ACVs)}}] \textbf{Average on the Closest Vehicles}.
For each vehicle position $p_i(t)$, $i=1,\ldots,N$, we set 
\begin{align*}
	\uu_{i}(x,t,\rho;\bp)  &= \begin{cases}\disp
	\chi(x-p_{i}(t))\frac{2 \dot{p}_{i}(t)u(\rho)}{\dot{p}_{i}(t)+u(\rho)}+\big(1-\chi(x-p_{i}(t))\big)u(\rho) &\text{if $(\dot{p}_{i},u)\neq(0,0)$}\\
		0 &\text{otherwise.}
	\end{cases}
\end{align*}
We introduce the function $\phi(x,t)$ that counts the number of vehicles whose position at time $t$ is close to point $x$, i.e.\ such that $\chi(x-p_i(t))>0$,
\begin{equation}\label{eq:phi}
	\phi(x,t) = \#\{i\in\{1,\dots,\ncar\} \,:\,\chi(x-p_{i}(t))>0 \}.
\end{equation}
We then define the function $\uu$ in \eqref{eq:model1ord} as the average value of the speeds of the closest vehicles, that is
\begin{align}\label{eq:umedio}
	\uu(x,t,\rho;\bp) & = \begin{cases}
	\disp\frac{1}{\phi(x,t)}\disp\sum_{i=1}^{\phi(x,t)}\uu_{i}(x,t,\rho;\bp) & \quad\text{if } \phi(x,t)>0
	\\
	u(\rho) & \quad\text{if } \phi(x,t)=0.
	\end{cases}
\end{align}

\end{description}

\noindent The two approaches are different only when two or more trajectories are very close to each other, otherwise they coincide.

We use the same assumptions made in \cite{colombo2016M3AS},  namely: $\rho\in[0,\rhomax]$, $u=u(\rho)$ is a regular and non increasing function with $u(\rhomax)=0$, $f(\rho)=\rho u(\rho)$ is strictly concave in $\rho$ and, for each $z>0$ the function ${\rho z u}/{(z+u)}$ is strictly concave in $\rho$; the flux function $\ff(x,t,\rho) = \rho\uu(x,t,\rho;\bp)$, with $\uu$ in \eqref{eq:utrue} or \eqref{eq:umedio}, is strictly concave with respect to $\rho$. Thus, there exists a unique density value $\sigma(x,t)$ where the flux reaches its maximum $\ff(x,t,\sigma) = \ff^{\max}(x,t)$ and we can define the \textit{sending} $S=S(x,t,\rho)$ and \textit{receiving} $R=R(x,t,\rho)$ functions in the standard way \cite{lebacque1996},
\begin{equation}\label{eq:SRfunct}
	S = \begin{cases}
		\ff(x,t,\rho) &\quad\text{if $\rho\leq\sigma(x,t)$}\\
		\ff^{\max}(x,t) &\quad\text{if $\rho>\sigma(x,t)$}
	\end{cases}
	\qquad
	R = \begin{cases}
		\ff^{\max}(x,t) &\quad\text{if $\rho\leq\sigma(x,t)$}\\
		\ff(x,t,\rho) &\quad\text{if $\rho>\sigma(x,t)$.}
	\end{cases}
\end{equation}
The critical density $\sigma(x,t)$ is defined by $\de_{\rho}\ff(x,t,\sigma)=0$. 
In the \ref*{vicina} case, for $(x,t)$ such that $\chi(x-p(t))\neq 0$ or $\chi(x-p(t))\neq 1$, it is implicitly defined by the non-linear relation 
\begin{equation}\label{eq:eqSigma}
\begin{split}
	2\dot{p}(t)\,\chi(x-p(t))\Big(\dot{p}(t)u(\sigma)+u^{2}(\sigma)+\rho\dot{p}(t)u_{\rho}(\sigma)\Big)&\\
	+\Big(1-\chi(x-p(t))\Big)\Big(u(\sigma)+\rho u_{\rho}(\sigma)\Big)\Big(\dot{p}(t)+u(\sigma)\Big)^{2}&=0.
\end{split}
\end{equation}
Also in the \ref*{media} case the computation of $\sigma$ requires the solution of a non-linear problem,
\begin{equation}\label{eq:eqSigma2}
\begin{split}
	\sum_{i=1}^{\phi(x,t)}\Big[2\dot{p_{i}}(t)\,\chi(x-p_{i}(t))\Big(\dot{p_{i}}(t)u(\sigma)+u^{2}(\sigma)+\rho\dot{p_{i}}(t)u_{\rho}(\sigma)\Big)&\\
	+\Big(1-\chi(x-p_{i}(t))\Big)\Big(u(\sigma)+\rho u_{\rho}(\sigma)\Big)\Big(\dot{p_{i}}(t)+u(\sigma)\Big)^{2}\Big]&=0.
\end{split}
\end{equation}
In both cases, for each point $(x,t)$, the calculation of $\sigma=\sigma(x,t)$ must be done numerically.
\begin{remark}\label{remark:sigma}
Numerical solvers for nonlinear equations such as \eqref{eq:eqSigma} or \eqref{eq:eqSigma2} have a high computational cost. Since the critical density $\sigma$ is the maximum point of the flow function $\ff$, one can reduce the complexity of the computation by sampling the values of $\ff=\ff(\cdot,\cdot,\rho)$ into a vector and applying a search for the maximum value of the vector that returns the corresponding density index.
\end{remark}

To highlight the differences between the two proposed models \ref*{vicina} and \ref*{media}, in Figure \ref{fig:chi} we plot different curves $\ff(x,t,\rho)$ with one or more trajectory data. We set 
$u(\rho)=\umax(\rhomax-\rho)/\rhomax$ and 
\begin{equation}\label{eq:chi}
	\chi(\xi) = \begin{dcases}
		0 &\quad\text{if $|\xi|>L$}\\
		(\xi+L)/(L-\ell) &\quad\text{if $-L\leq \xi<-\ell$}\\
		1 &\quad\text{if $-\ell\leq \xi\leq\ell$}\\
		(\xi-L)/(\ell-L) &\quad\text{if $\ell< \xi\leq L$}.
	\end{dcases}
\end{equation}
In Figure \ref{fig:chi} on the left we fix the time $t$ and we plot the curves $\ff(x,t,\rho)$ in the case of a single trajectory data. In this case the two approaches \ref*{vicina} and \ref*{media} coincide, and  we observe that the critical density $\sigma(x,t)$ increases with $\chi(x-p(t))$ while $\ff^{\max}(x,t)$ decreases.
In Figure \ref{fig:chi} on the right, we compare the flux function $\ff(x,t,\rho)$ given by the \ref*{vicina} and \ref*{media} approaches assuming to have three vehicles travelling with different speed and close enough to influence the velocity function \eqref{eq:umedio}. The solid line represents the flux function $\ff(x,t,\rho)$ with the \ref*{media} approach on the cell $x$ where all the three vehicles contribute to the velocity computation, i.e.\ $\phi(x,t)=3$. The dotted lines, instead, represent the flux function with the \ref*{vicina} model in the cells $x$ where $\chi(x-p_{\kappa}(t))=1$, $\kappa\in\{1,2,3\}$. We observe that $\sigma(x,t)$ increases with the decrease of vehicle velocity. Indeed, for small velocities the vehicles need more time to fill the road, thus we obtain higher values of the critical density.
\begin{figure}[h!]
\centering
\includegraphics[width=0.3\columnwidth]{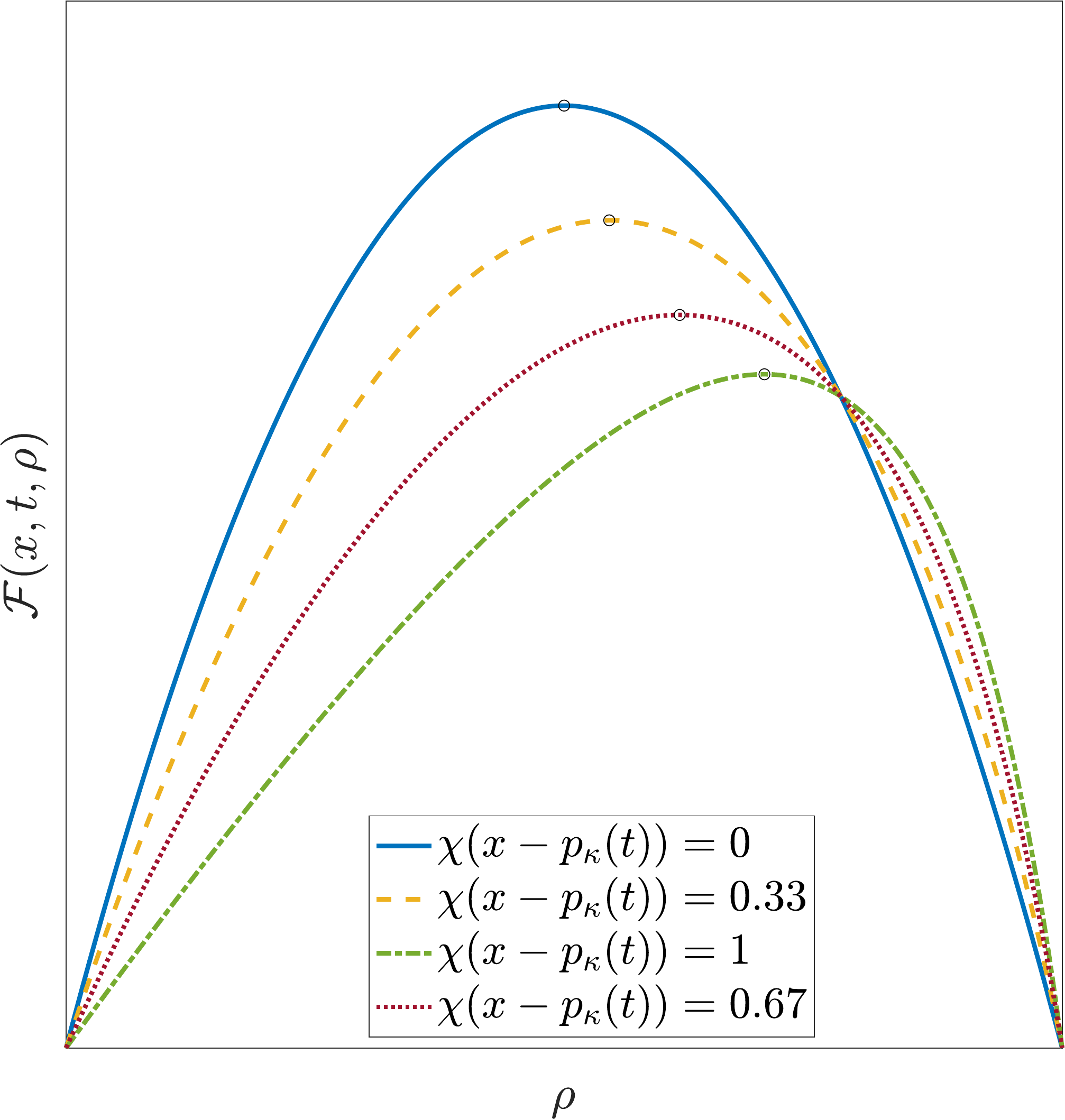}\qquad
\includegraphics[width=0.3\columnwidth]{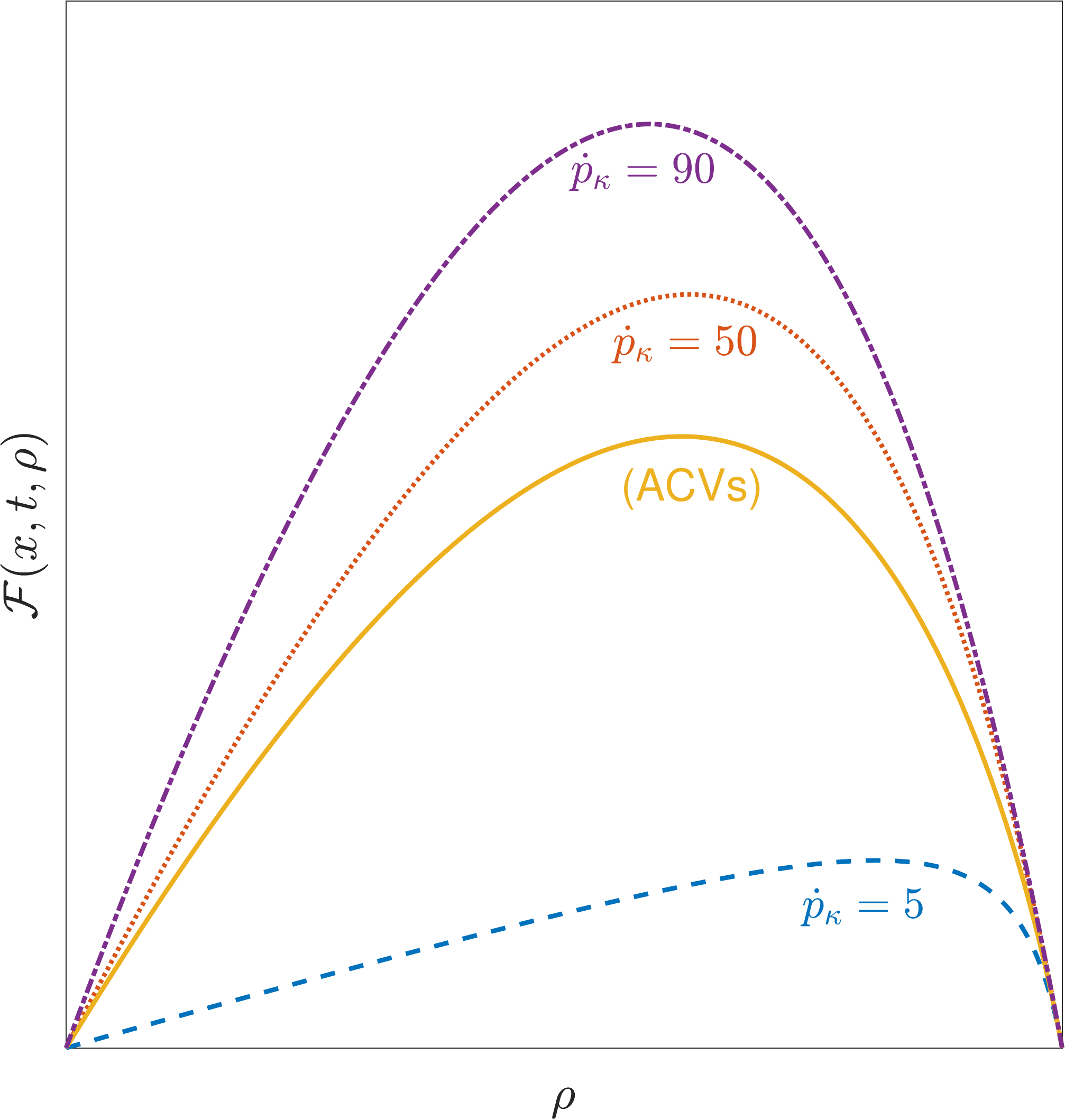}
\caption{Left: flux function $\ff(x,t,\rho)$ at fixed $t$ as $\rho$ and $x$ change; the circles represent $(\sigma(x,t),\ff^{\max}(x,t))$ as $x$ changes. Right: comparison of the flow function given by the \ref*{vicina} and \ref*{media} approaches, assuming to have three vehicles travelling with different speed and close enough to have $\phi(x,t)>1$; the solid line represents the flux obtained by the \ref*{media} formula; the dotted lines represent the flux obtained with the \ref*{vicina} formula with closest vehicle moving with velocity $\dot{p}_\kappa=90, 50, 5\,\kmh$.}
\label{fig:chi}
\end{figure}
%

\medskip

Let us consider now a numerical grid on a road $[a,b]$. Let $N_{x}$ be the number of cells $[x_{j-1/2},x_{j+1/2})$ of size $\dx$, and $N_t+1$ the number of intervals of length $\dt$ into which we divide the period of time $[0,T]$. Let $\rho^{n}_{j}=\rho(x_{j},t^{n})$ be the density of vehicles into the cell $x_{j}$ at time $t^{n}$, defined as the cell average
\begin{equation*}
	\rho^{n}_{j} =\frac{1}{\dx} \int_{x_{j-1/2}}^{x_{j+1/2}}\rho(x,t^{n})dx.
\end{equation*}

To approximate the model \eqref{eq:model1ord} we use the Cell Transmission Model (CTM) \cite{daganzo1994TRB}. 
The numerical scheme has the following structure
\begin{equation}\label{eq:schema}
	\rho^{n+1}_{j} = \rho^{n}_{j}-\frac{\dt}{\dx}(F^{n}_{j+1/2}-F^{n}_{j-1/2}),
\end{equation} 
with the numerical flux $F^{n}_{j+1/2}$ defined as
\begin{equation}\label{eq:numFlux1}
	F^{n}_{j+1/2}=\min\{S(x_j,t^n,\rho^{n}_{j}),R(x_{j+1},t^n,\rho^{n}_{j+1})\},
\end{equation}
where $S$ and $R$ are the sending and receiving functions introduced in \eqref{eq:SRfunct}. At each time step $n$ and for each cell centered in $x_j$, the critical densities $\sigma(x_j,t^n)$ must be computed numerically. As observed in Remark \ref{remark:sigma}, 
the computational costs are reduced by calculating the values $\ff^n_j(\rho)=\ff(x_j,t^n,\rho)$ as $\rho\in[0,\rhomax]$ varies and applying a search for the maximum of these values to compute the index $l$ such that $\sigma(x_j,t^n)\approx \rho_l$. For instance, on the MATLAB platform the \textnormal{\texttt{max}} function allows to efficiently compute the index of the maximum of the sampled flux values and is significantly faster than the function \textnormal{\texttt{fsolve}} required to solve non-linear equations as \eqref{eq:eqSigma} or \eqref{eq:eqSigma2}.

With standard arguments it is possible to prove the following properties of the scheme \eqref{eq:schema}-\eqref{eq:numFlux1}. Both results given in the next Proposition \ref{prop:monotonia1} and \ref{prop:CFL1} apply to the \ref*{vicina} and \ref*{media} models. 
For completeness, in Appendix \ref{sec:proof} we give the details of the proofs for the \ref*{vicina} approach.

\begin{proposition}\label{prop:monotonia1}
The approximate solution $\boldsymbol\rho^n=(\ldots, \rho^n_{j-1},\rho^n_j,\rho^n_{j+1},\ldots)$ constructed via the scheme  \eqref{eq:schema}-\eqref{eq:numFlux1} satisfies the bounds
$$
0 \leq \rho^n_j \leq \rhomax \quad \mbox{ for all } j\in\Z,\ n\in\N
$$
if the following Courant-Friedrichs-Levy (CFL) condition holds:
\begin{equation}\label{eq:CFL_flux}
	\disp\frac{\dt}{\dx}\disp\max_{(x,t,\rho)}\big|\partial_\rho\ff(x,t,\rho)\big|\leq 1. 
\end{equation}
\end{proposition}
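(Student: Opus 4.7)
The plan is to derive the maximum principle via the standard monotonicity / invariant-region argument for Godunov-type schemes. I would rewrite the update \eqref{eq:schema}--\eqref{eq:numFlux1} as a three-point map
$$
\rho^{n+1}_j = H^n_j\bigl(\rho^n_{j-1},\rho^n_j,\rho^n_{j+1}\bigr),
$$
and prove two facts: (a) the constant states $0$ and $\rhomax$ are preserved by $H^n_j$, and (b) under \eqref{eq:CFL_flux} the map $H^n_j$ is nondecreasing in each of its three arguments. These combine to give $0 = H^n_j(0,0,0) \leq \rho^{n+1}_j \leq H^n_j(\rhomax,\rhomax,\rhomax) = \rhomax$ whenever all entries $\rho^n_k$ lie in $[0,\rhomax]$, and the claim follows by induction on $n$.

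For (a), I would note that $\ff(x,t,\rho) = \rho\,\uu(x,t,\rho;\bp)$ vanishes both at $\rho=0$ (trivially) and at $\rho=\rhomax$ (because $u(\rhomax)=0$ forces the harmonic mean in \eqref{eq:utrue}, and therefore $\uu$, to vanish). Consequently $S(\cdot,\cdot,0)=0$ and $R(\cdot,\cdot,\rhomax)=0$, so the numerical flux \eqref{eq:numFlux1} is identically zero at either constant state and $H^n_j$ reproduces it.

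Monotonicity in the neighbor entries $\rho^n_{j\pm 1}$ follows immediately from the Godunov structure: $S(x,t,\cdot)$ is nondecreasing while $R(x,t,\cdot)$ is nonincreasing, so $\partial F^n_{j-1/2}/\partial \rho^n_{j-1}\geq 0$ and $\partial F^n_{j+1/2}/\partial \rho^n_{j+1}\leq 0$, whence $\partial H^n_j/\partial \rho^n_{j\pm 1}\geq 0$. The delicate step is monotonicity in the diagonal argument, i.e.\ showing
$$
\frac{\partial H^n_j}{\partial \rho^n_j} = 1 - \frac{\dt}{\dx}\Bigl(\frac{\partial F^n_{j+1/2}}{\partial \rho^n_j} - \frac{\partial F^n_{j-1/2}}{\partial \rho^n_j}\Bigr) \geq 0.
$$
The key observation here is that both one-sided partial derivatives are controlled by the \emph{same} critical density $\sigma(x_j,t^n)$: the first is nonzero only when $\rho^n_j<\sigma(x_j,t^n)$, in which regime the Godunov min selects $S(x_j,t^n,\rho^n_j)$ with derivative $\partial_\rho \ff \in (0,\max|\partial_\rho \ff|]$, while the second is nonzero only when $\rho^n_j>\sigma(x_j,t^n)$, in which regime the min selects $R(x_j,t^n,\rho^n_j)$ with derivative $\partial_\rho \ff \in [-\max|\partial_\rho \ff|,0)$. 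Since the two regimes are mutually exclusive, at most one term inside the bracket contributes at any $(x_j,t^n,\rho^n_j)$, and its absolute value is bounded by $\max_{(x,t,\rho)}|\partial_\rho \ff|$; invoking \eqref{eq:CFL_flux} yields the desired nonnegativity.

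The main obstacle is precisely this mutual-exclusivity observation: without exploiting that the two directional derivatives live on opposite sides of the same $\sigma(x_j,t^n)$, the naive bound would contain an extraneous factor of $2$ and force a more restrictive CFL. Once the observation is in place, the argument is transparent and carries over verbatim to the \ref*{media} model, since the strict concavity of $\ff$ in $\rho$ and the structural role of the unique critical density $\sigma(x_j,t^n)$ are preserved by the averaging in \eqref{eq:umedio}.
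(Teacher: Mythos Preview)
Your proposal is correct and follows essentially the same monotonicity/invariant-region argument as the paper: write the scheme as a three-point map, verify that $0$ and $\rhomax$ are fixed, check monotonicity in $\rho^n_{j\pm1}$ via the signs of $\partial_\rho S$ and $\partial_\rho R$, and handle the diagonal entry using that $S(x_j,t^n,\cdot)$ and $R(x_j,t^n,\cdot)$ have nontrivial $\rho$-derivative only on opposite sides of the same $\sigma(x_j,t^n)$. The paper reaches the same conclusion through a four-way case split on which branch of each $\min$ is active (with a further split on $\rho^n_j\lessgtr\sigma^n_j$ in the case where both fluxes depend on $\rho^n_j$); your mutual-exclusivity observation is precisely the mechanism that makes their case iv) work, so the two arguments are equivalent in content.
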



\begin{proposition}\label{prop:CFL1}
The stability of the scheme is guaranteed by the condition
\begin{equation}\label{eq:CFL1}
	\dt\leq 
	\disp\frac{\dx}{\max\big\{\sup_{(x,t)}\dot{p}_{\kappa(x,t)}(t), \umax\big\}}
\end{equation}
where $\dot{p}_\kappa$ is the velocity of the $\kappa$-th vehicle, with $\kappa$ defined in \eqref{eq:kvicino}, and $\umax$ is the maximum velocity value given by the analytical model.
\end{proposition}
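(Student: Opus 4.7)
The plan is to reduce Proposition~\ref{prop:CFL1} to Proposition~\ref{prop:monotonia1} by showing that \eqref{eq:CFL1} implies \eqref{eq:CFL_flux}. Concretely, I would establish the pointwise estimate
$$\max_{(x,t,\rho)} \bigl|\partial_\rho \ff(x,t,\rho;\bp)\bigr| \;\leq\; \max\bigl\{\sup_{(x,t)}\dot{p}_{\kappa(x,t)}(t),\, \umax\bigr\},$$
which, inserted into \eqref{eq:CFL_flux}, reproduces \eqref{eq:CFL1}.

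For this, I would exploit the structure of \eqref{eq:utrue}: $\uu$ is a $\chi$-convex combination of $u(\rho)$ and the harmonic mean $\tfrac{2\dot{p}_\kappa u}{\dot{p}_\kappa+u}$. Since $u\leq \umax$ and the harmonic mean of two nonnegative numbers always lies between their minimum and their maximum, the pointwise bound $\uu\leq \max\{\dot{p}_\kappa,\umax\}$ is immediate. Writing $\ff=\chi f_1+(1-\chi)f_2$ with $f_1(\rho)=\rho\cdot\tfrac{2\dot{p}_\kappa u(\rho)}{\dot{p}_\kappa+u(\rho)}$ and $f_2(\rho)=\rho u(\rho)$, the strict concavity of each $f_i$ listed right after the definition of $\uu$ forces the maximum of $|\partial_\rho f_i|$ to be attained at one of the endpoints of $[0,\rhomax]$, and there one can bound the derivative by $\umax$ (for $f_2$) or by the harmonic mean of $\dot{p}_\kappa$ and $\umax$ (for $f_1$ at $\rho=0$). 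The convex combination estimate then closes the argument.

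The hard step is controlling $|\partial_\rho f_1|$ at $\rho=\rhomax$: a direct computation gives the value $2\rhomax u'(\rhomax)$, whose magnitude can in principle exceed $\umax$ (as an explicit Greenshields computation reveals), so the convex combination with $f_2$ and the specific form of the cutoff $\chi$ must be used together to control the combined flux derivative uniformly. An alternative, arguably cleaner route is to bypass the explicit bound on $\partial_\rho \ff$ and work directly at the level of the numerical flux: the inequality $F^n_{j+1/2}\leq S(x_j,t^n,\rho^n_j)\leq \rho^n_j \max\{\dot{p}_\kappa,\umax\}$ follows from $\uu\leq \max\{\dot{p}_\kappa,\umax\}$, and substituting into \eqref{eq:schema} together with the positivity of the numerical flux yields the non-negativity of $\rho^{n+1}_j$ under \eqref{eq:CFL1}. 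The upper bound $\rho^{n+1}_j\leq \rhomax$ is then obtained by a symmetric argument using the receiving function $R$ near the jammed density, which is the step where the assumptions on $u$ and on the harmonic-mean modification stated in Section~\ref{sec:Ndata} must be used in full.
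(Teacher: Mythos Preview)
Your overall strategy---show that \eqref{eq:CFL1} implies the CFL condition \eqref{eq:CFL_flux} of Proposition~\ref{prop:monotonia1}---is exactly the route the paper takes. The execution, however, differs in two respects.

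First, the paper avoids your endpoint analysis of $f_1$ and $f_2$ altogether. It simply writes $\partial_\rho\ff=\uu+\rho\,\partial_\rho\uu$, observes that $\partial_\rho\uu\le 0$ (since $u$ is non-increasing and the harmonic-mean map is monotone), and concludes $\partial_\rho\ff\le\uu$. The bound on $\uu$ then follows from the convex-combination structure together with the elementary inequality $\tfrac{2ab}{a+b}\le\tfrac{a+b}{2}\le\max\{a,b\}$ applied to $a=\dot p_\kappa$, $b=u(\rho)$. This is shorter than your decomposition and never touches the endpoints of $[0,\rhomax]$.

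Second---and this is where your proposal is actually more careful than the paper---the paper only establishes the \emph{one-sided} bound $\partial_\rho\ff\le\max\{\sup\dot p_\kappa,\umax\}$; it never controls $|\partial_\rho\ff|$ from below. Your computation $f_1'(\rhomax)=2\rhomax u'(\rhomax)$ is correct, and for Greenshields with $\chi\equiv 1$ it gives $|\partial_\rho\ff(\cdot,\cdot,\rhomax)|=2\umax$, so the two-sided estimate you set out to prove genuinely fails. The paper's proof therefore does not verify \eqref{eq:CFL_flux} as stated; it tacitly treats the upper bound on $\partial_\rho\ff$ as sufficient for ``stability''. Your alternative route---bounding the numerical flux directly via $F^n_{j+1/2}\le S^n_j\le\rho^n_j\max\{\dot p_\kappa,\umax\}$ to get $\rho^{n+1}_j\ge 0$, and arguing symmetrically near $\rhomax$---is a more rigorous way to obtain the invariant-region bound under \eqref{eq:CFL1} than what the paper records, though the paper does not pursue it.
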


\begin{remark} If all the trajectories on the road have a speed always lower than the parameter $\umax=u(0)$ of the analytical model, then we use the standard CFL condition $\dt\leq \dx/\umax$. 
\end{remark}

\medskip

Now we propose an example to show the differences between the two approaches \ref*{vicina} and \ref*{media}.
Recall that $u(\rho) =\umax(\rhomax-\rho)/\rhomax$. We fix the following parameters: $\rhomax=100\,\vehkm$, $\umax=90\,\kmh$, $a=0$, $b = 3\,\km$, $\dx = 0.1\,\km$, $T=1\,\mymin$. The time step $\dt = 0.2\,\second$ is chosen to satisfy the CFL condition in \eqref{eq:CFL1}. The function $\chi$ in \eqref{eq:utrue} is defined in \eqref{eq:chi}
with $\ell=2\dx$ and $L=6\dx$. We test two different initial data, namely a rarefaction wave and a shock one, given by 
\begin{equation*}
	\rho_{0}(x) = \begin{cases}
		45 &\quad\text{if $x<1.5$}\\
		30 &\quad\text{if $x\geq 1.5$}
	\end{cases}\qquad\text{and}\qquad
	\rho_{0}(x) = \begin{cases}
		20 &\quad\text{if $x<1.5$}\\
		40 &\quad\text{if $x\geq 1.5$}.
	\end{cases}
\end{equation*}

Since the two definitions of $\uu$ in \eqref{eq:utrue} and \eqref{eq:umedio} differ for $\phi(x,t)>1$ with $\phi$ in \eqref{eq:phi}, we fix the trajectories of three vehicles as 
$$ p_{1}(t) = 1+10t, \quad p_{2}(t)=1.001+25t \ \mbox{ and }\  p_{3}(t)=1.002+50t,$$
 so that $\phi(x_{k},t)>1$ for some cells $x_{k}$ and time $t$. In Figure \ref{fig:test1medio} we compare the results at different times starting from the rarefaction wave on the top and the shock one on the bottom. Let us consider the rarefaction case: the main differences between the solutions to \eqref{eq:model1ord} with $\uu$ in \eqref{eq:utrue} and \eqref{eq:umedio} can be observed in plots \protect\subref{fig:medio1}, \protect\subref{fig:medio2} and \protect\subref{fig:medio3}, i.e.\ when the three vehicles are really close to each other ($\phi(x,t)>1$). Indeed, the solutions related to $\uu$ in \eqref{eq:utrue} (red-solid lines) show oscillations with higher picks than those related to \eqref{eq:umedio} (blue-dotted lines), where the velocity value is averaged. At the final time of the simulation, plot \protect\subref{fig:medio4}, the two solutions are quite similar, since the three vehicles are far enough from each other ($\phi(x,t)\leq1$ for each $x$) and the two definitions of $\uu$ in \eqref{eq:utrue} and \eqref{eq:umedio} coincide. A similar analysis holds for the shock wave initial data.
Therefore, the main difference between the two dynamics (red-solid and blue-dotted lines) concerns the height of the peaks of the oscillations caused by the individual behavior of the tracked vehicles.

\begin{figure}[h!]
\centering
\subfloat[][Rarefaction $t=3\dt$]{\label{fig:medio1}
\includegraphics[width=0.233\columnwidth]{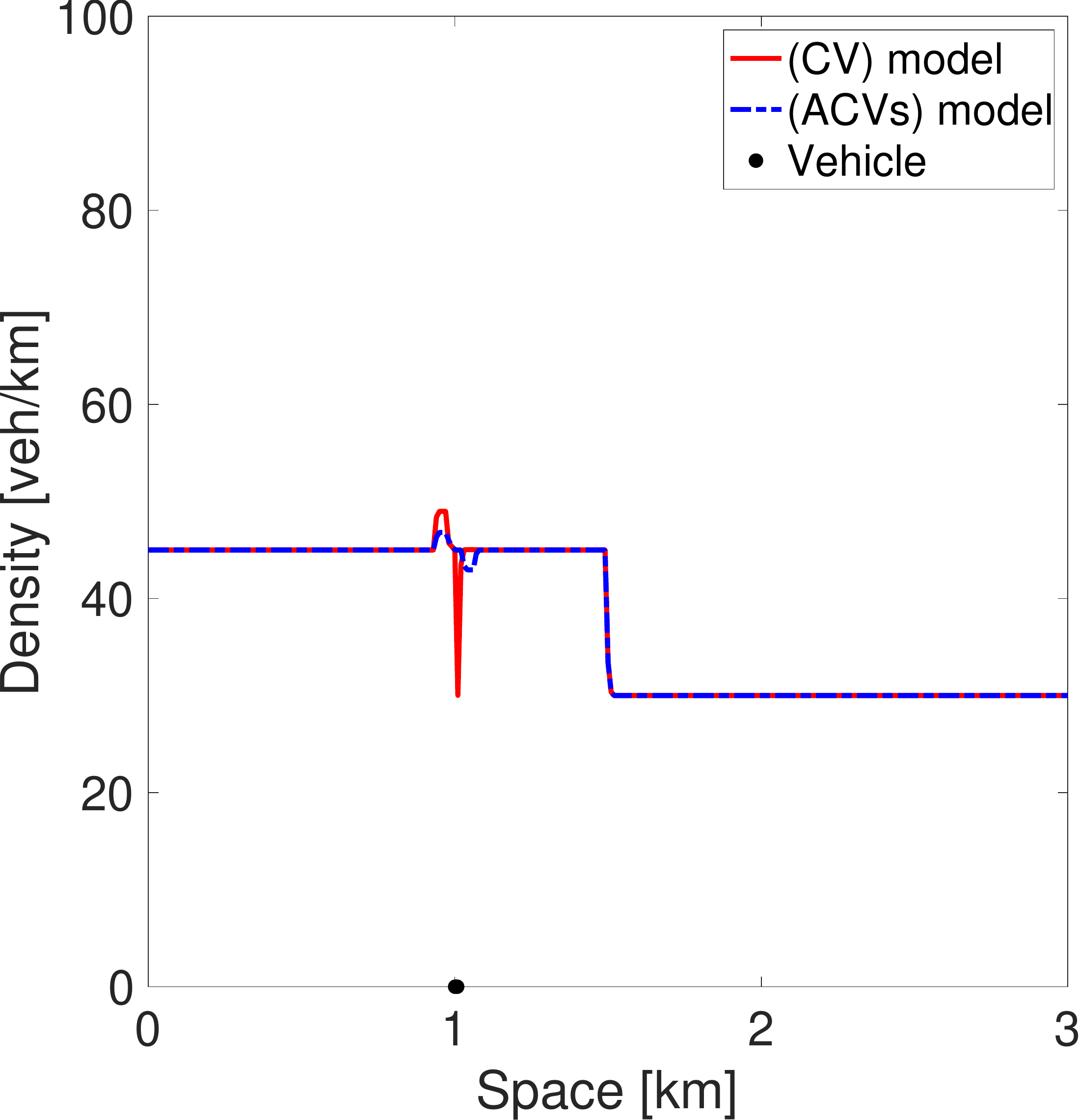}
}
\subfloat[][Rarefaction $t=10\dt$]{\label{fig:medio2}
\includegraphics[width=0.233\columnwidth]{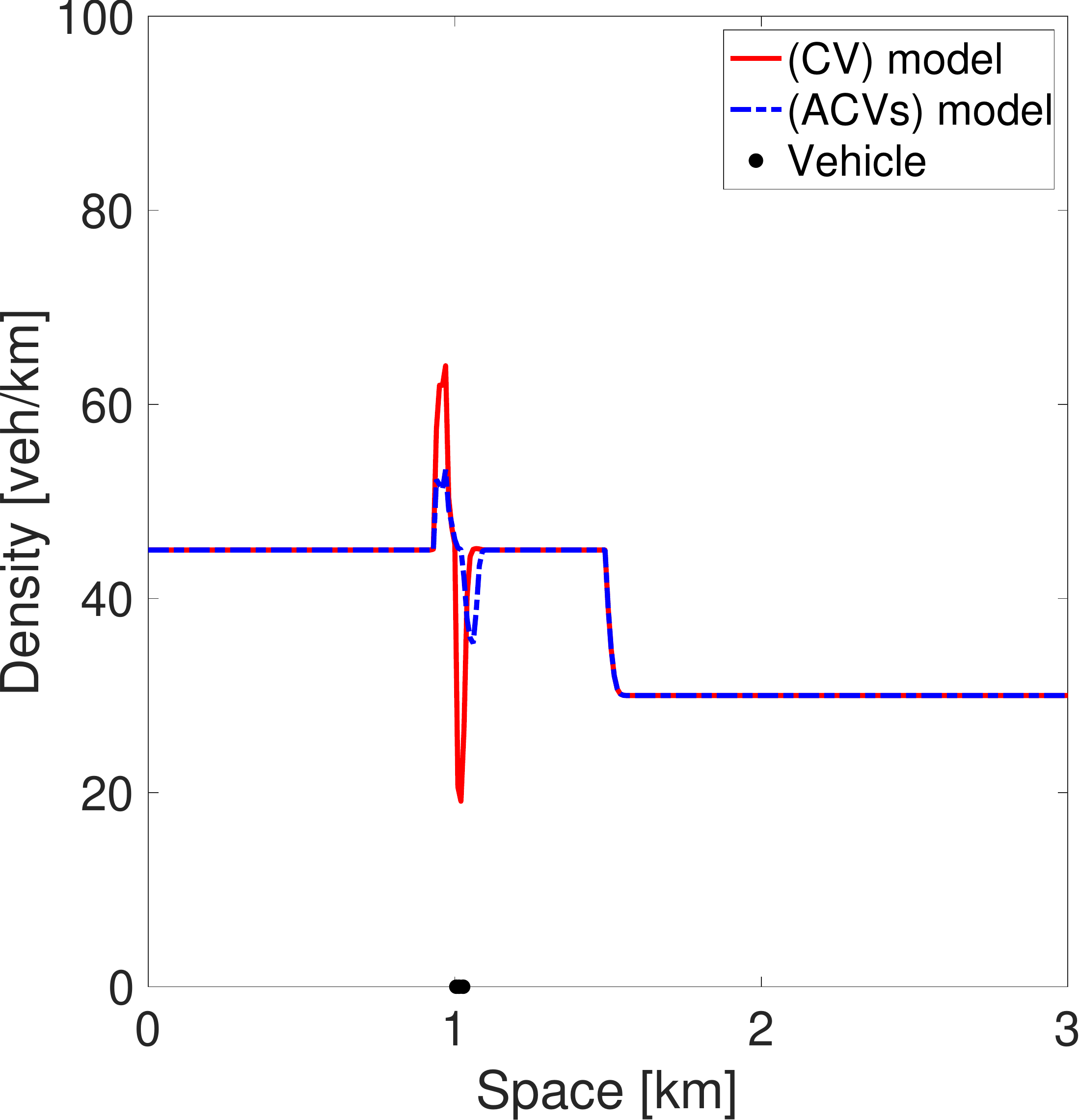}
}
\subfloat[][Rarefaction $t=40\dt$]{\label{fig:medio3}
\includegraphics[width=0.233\columnwidth]{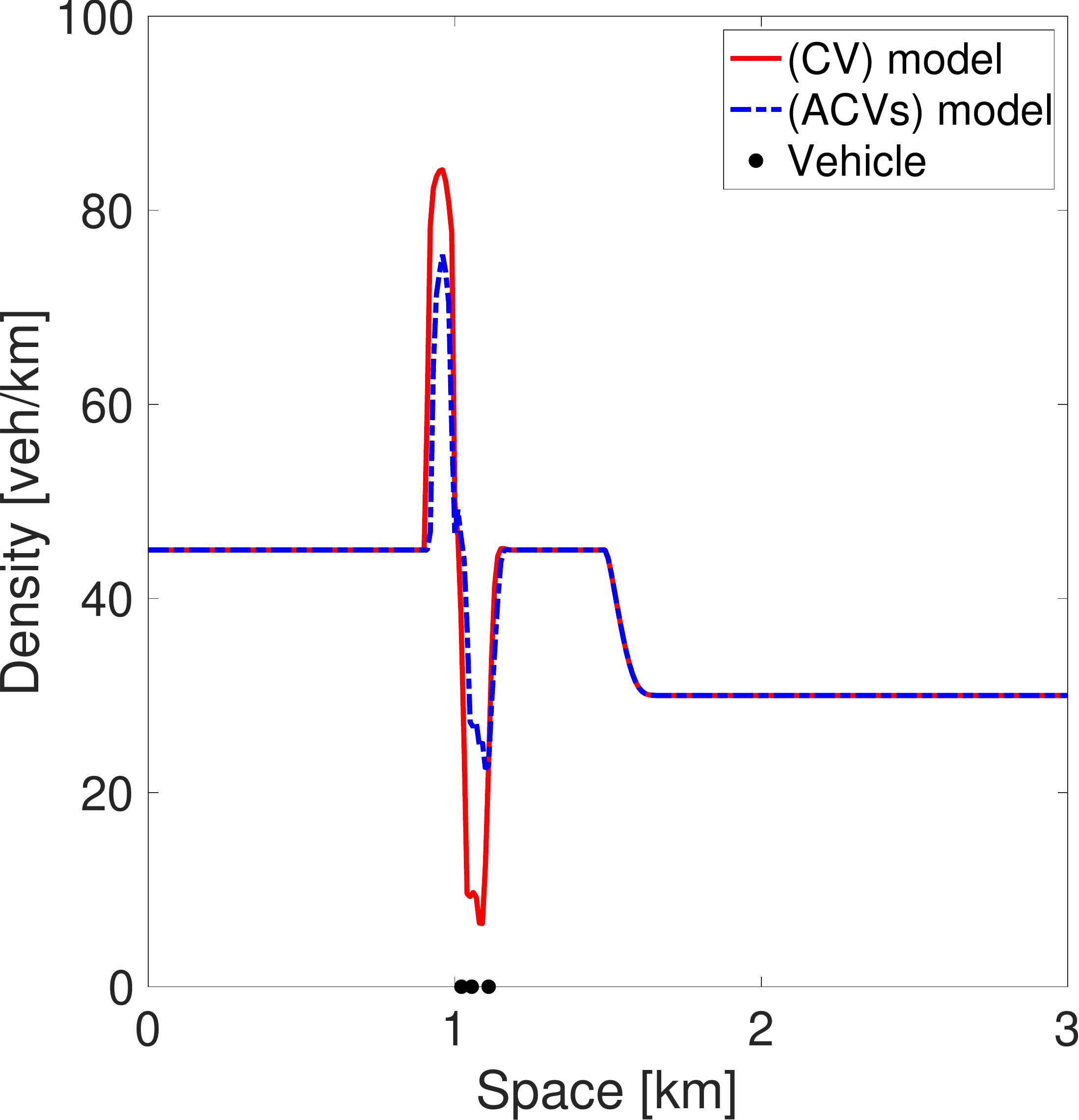}
}
\subfloat[][Rarefaction $t=T$]{\label{fig:medio4}
\includegraphics[width=0.233\columnwidth]{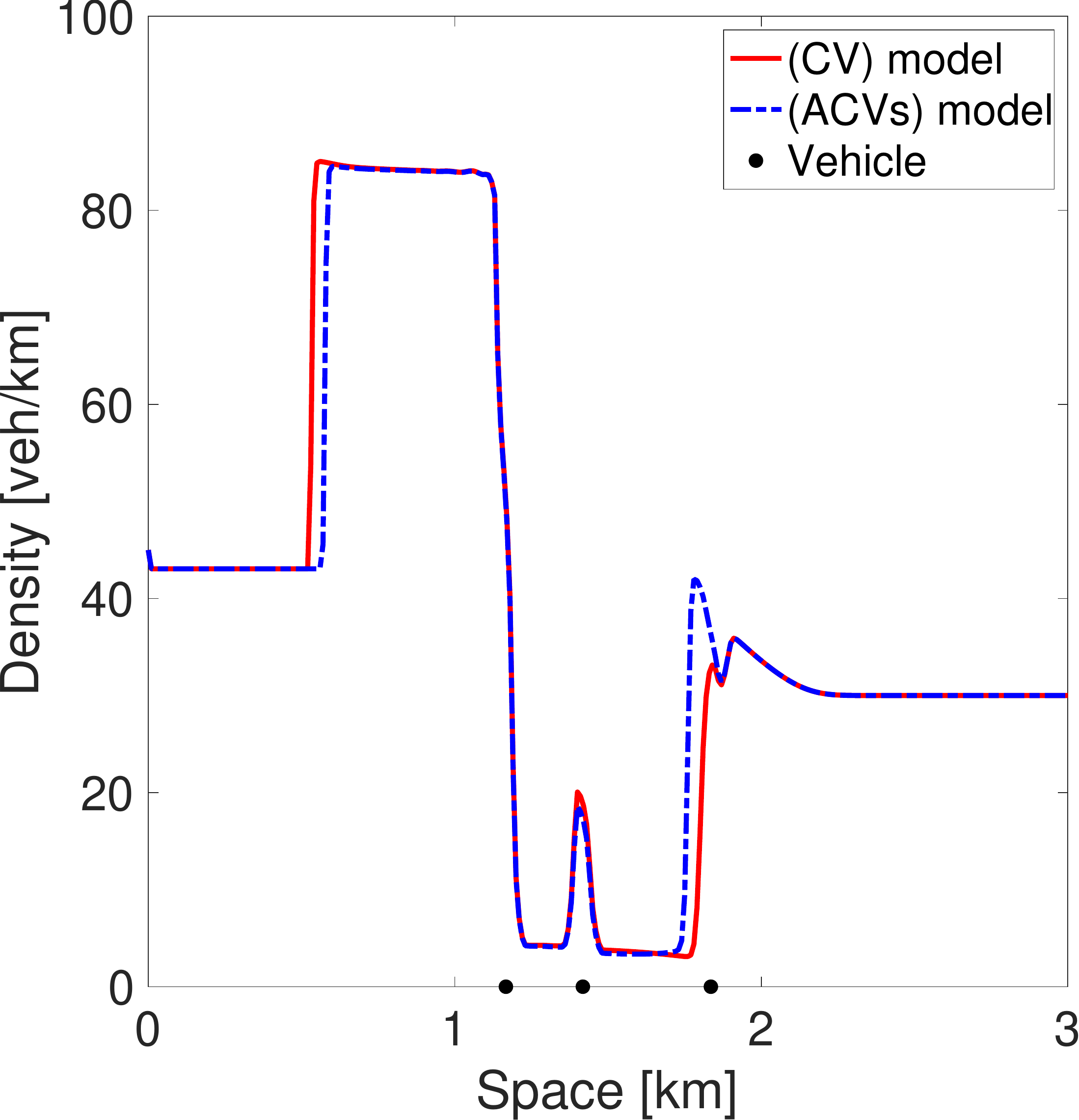}
}
\\
\subfloat[][Shock $t=3\dt$]{\label{fig:medio1s}
\includegraphics[width=0.233\columnwidth]{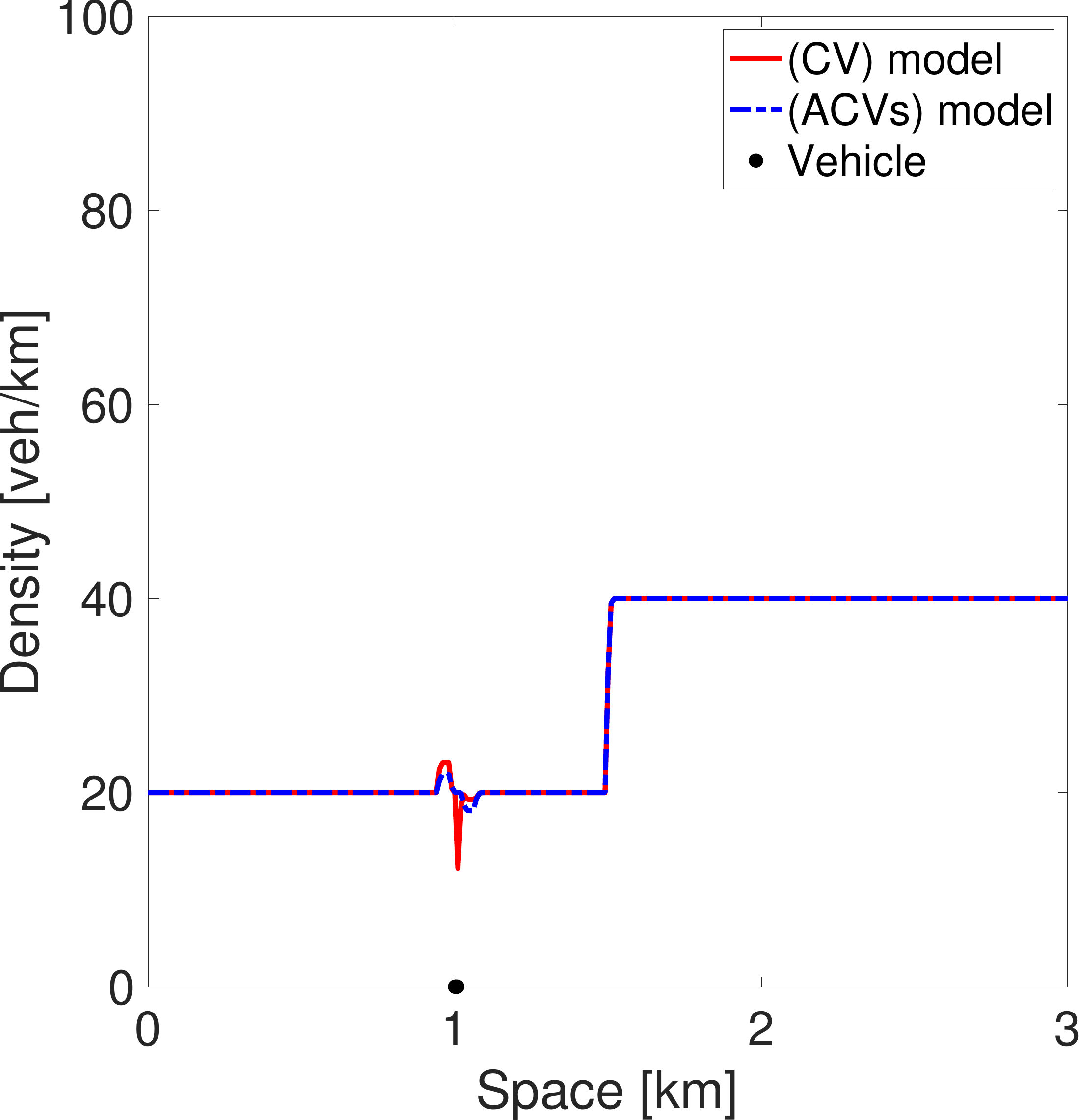}
}
\subfloat[][Shock $t=10\dt$]{\label{fig:medio2s}
\includegraphics[width=0.233\columnwidth]{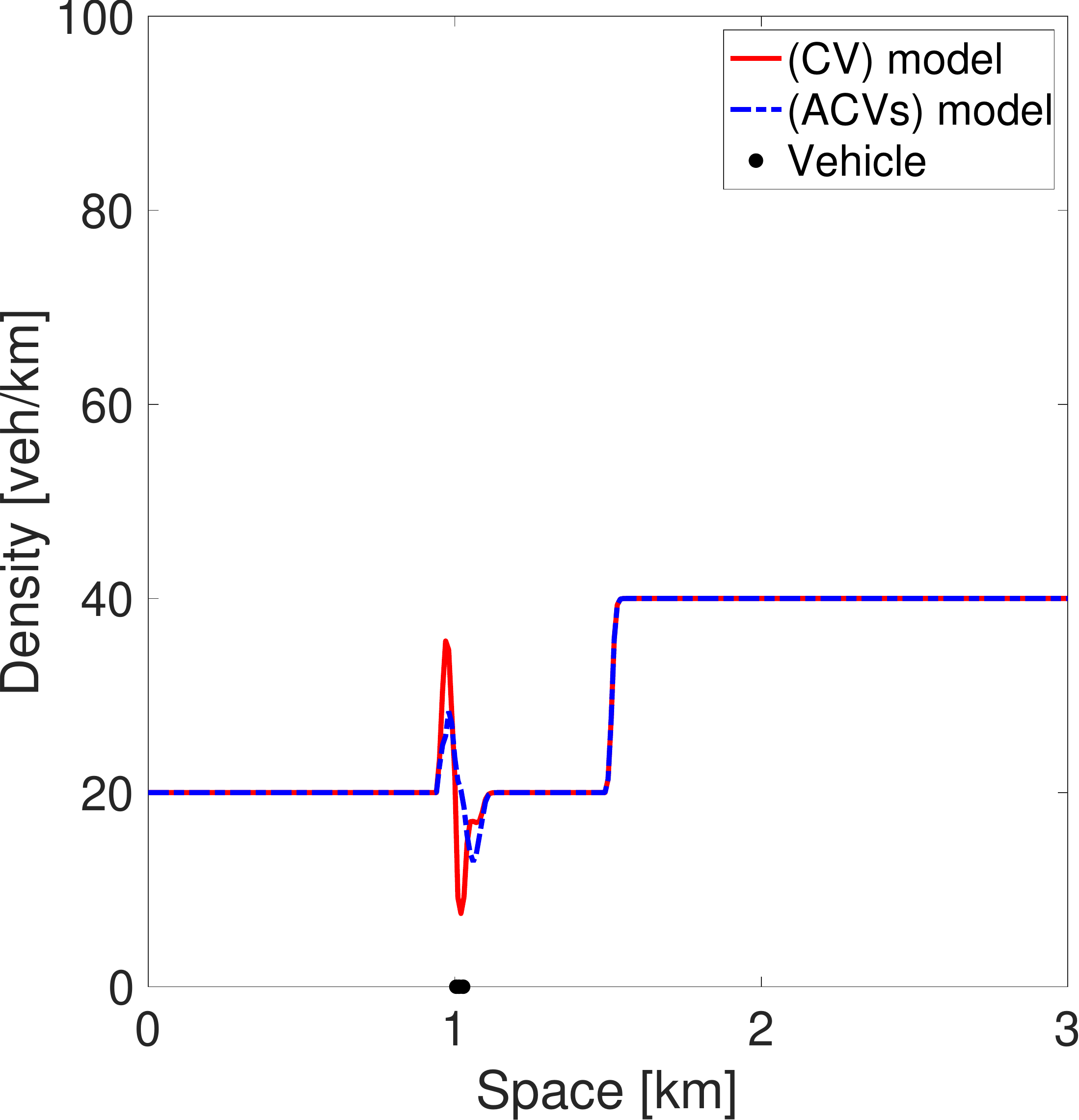}
}
\subfloat[][Shock $t=40\dt$]{\label{fig:medio3s}
\includegraphics[width=0.233\columnwidth]{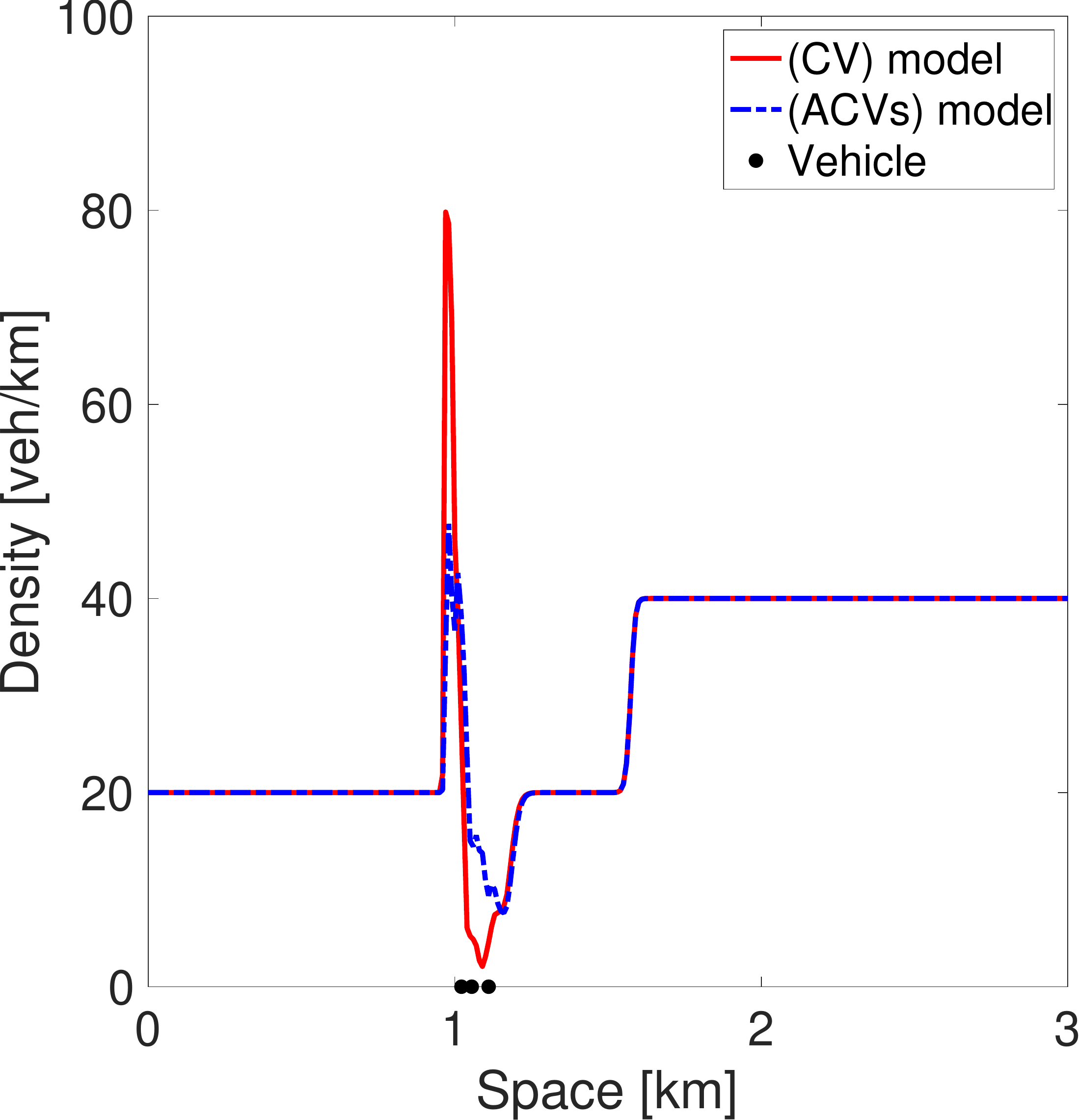}
}
\subfloat[][Shock $t=T$]{\label{fig:medio4s}
\includegraphics[width=0.233\columnwidth]{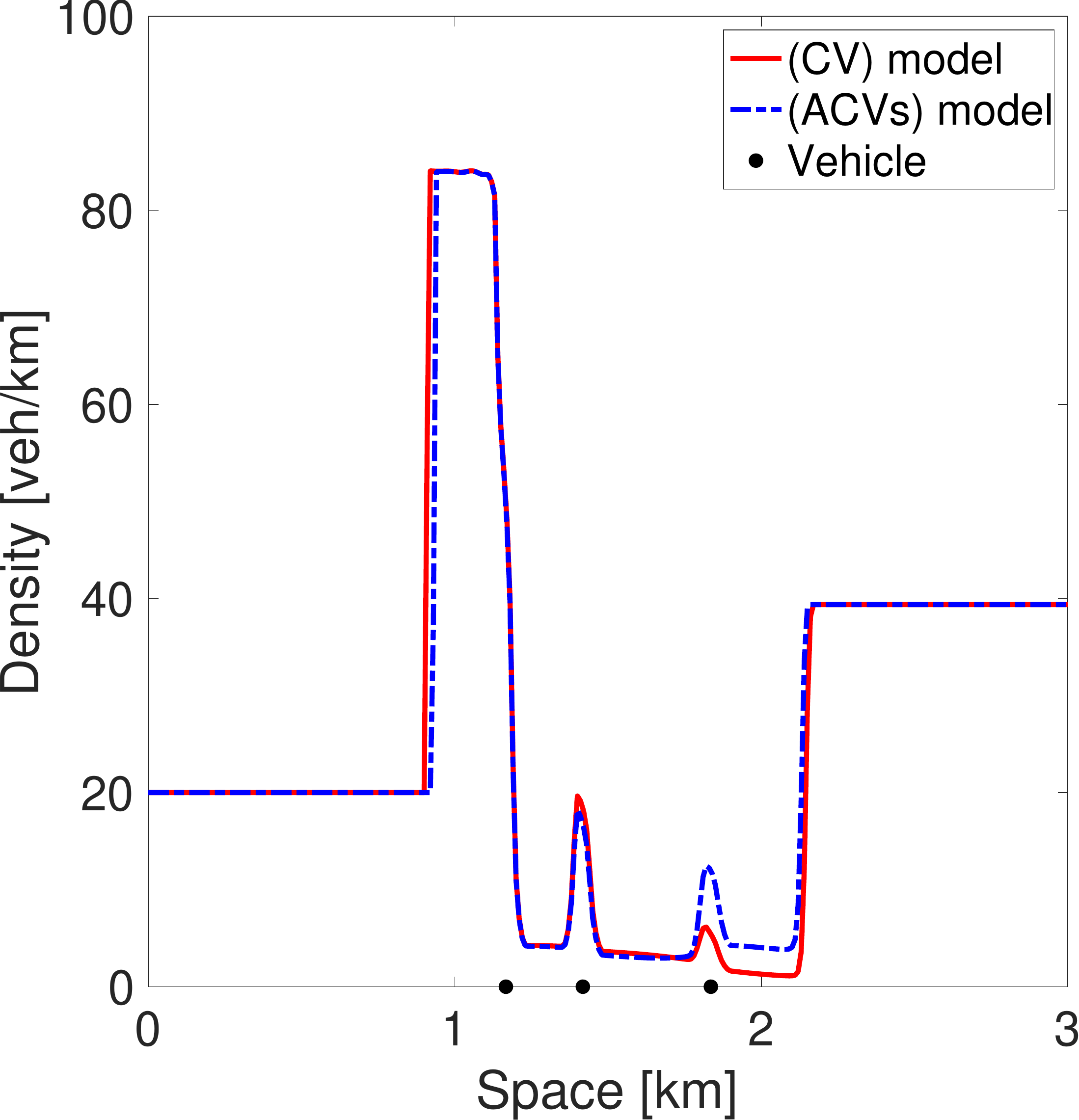}
}
\caption{Comparison between model \eqref{eq:model1ord} with velocity function $\uu$ defined in \eqref{eq:utrue} and \eqref{eq:umedio}, red-solid and blue-dotted lines respectively, at different times of the simulation starting from a rarefaction wave (top) and shock one (bottom).}
\label{fig:test1medio}
\end{figure}

\bigskip

We conclude this section by highlighting another important feature of the model described above.
The proposed approach also allows the coupling of a macroscopic model with a microscopic one.
Put simply, trajectory data can be generated from a microscopic model, and then included in the velocity function $\uu$ following the \ref*{vicina} or \ref*{media} methodology.
By coupling, for example, the first order macroscopic model \eqref{eq:model1ord} with a second order microscopic one of the follow-the-leader type, the resulting \textit{multiscale model} would have the following form
\begin{equation}\label{eq:micromacro}
\left\{\hspace{-0.15cm}\begin{array}{ll}
\de_{t}\rho+\de_{x}(\rho\,\uu(x,t,\rho;\bp)) = 0 &
\smallskip\\
\dot p_i(t) = V_i(t) & i=1,\ldots,N
\smallskip\\
\dot V_i(t) = A(p_i(t), p_{i+1}(t), V_i(t), V_{i+1}(t)) & i=1,\ldots,N-1
\smallskip\\
\dot V_N(t) = 0, &
\end{array}\right.
\end{equation}
where the $N$-th vehicle is the leader which has a special dynamics, and the function $A$ represents the acceleration to be defined, see for instance \cite{zhao2017TRB}.

We propose a numerical test, to show how the model \eqref{eq:micromacro} can reproduce typical traffic phenomena. In Figure \ref{fig:microSim} on the left, we generate trajectories by the second order microscopic model used in \cite{cristiani2019DCDSB} (equations (1)-(6)-(7)), which is specifically designed to reproduce stop \& go waves. Starting from a constant initial density along a road, in Figure \ref{fig:microSim} on the right we observe how the microscopic dynamics cause a variation on the macroscopic density, leading to
the reproduction of the stop \& go phenomenon at the macroscopic level. In this simulation, we have used the \ref*{vicina} approach. 
For completeness, in Figure \ref{fig:microSim2} we superpose the density profile obtained with the \ref*{vicina} method to the \ref*{media} one. The two profiles are very similar because the not-perturbed vehicles generated by the microscopic dynamics are all moving at the same speed.

The advantage of using a model of the type proposed in \eqref{eq:micromacro} is clear from Figure \ref{fig:microSim3}, where a small sample of microscopic data is sufficient to describe a stop \& go wave that is difficult to simulate through
a macroscopic model (particularly of the first order). 

\begin{figure}[h!]
\centering
\includegraphics[width=0.3\columnwidth]{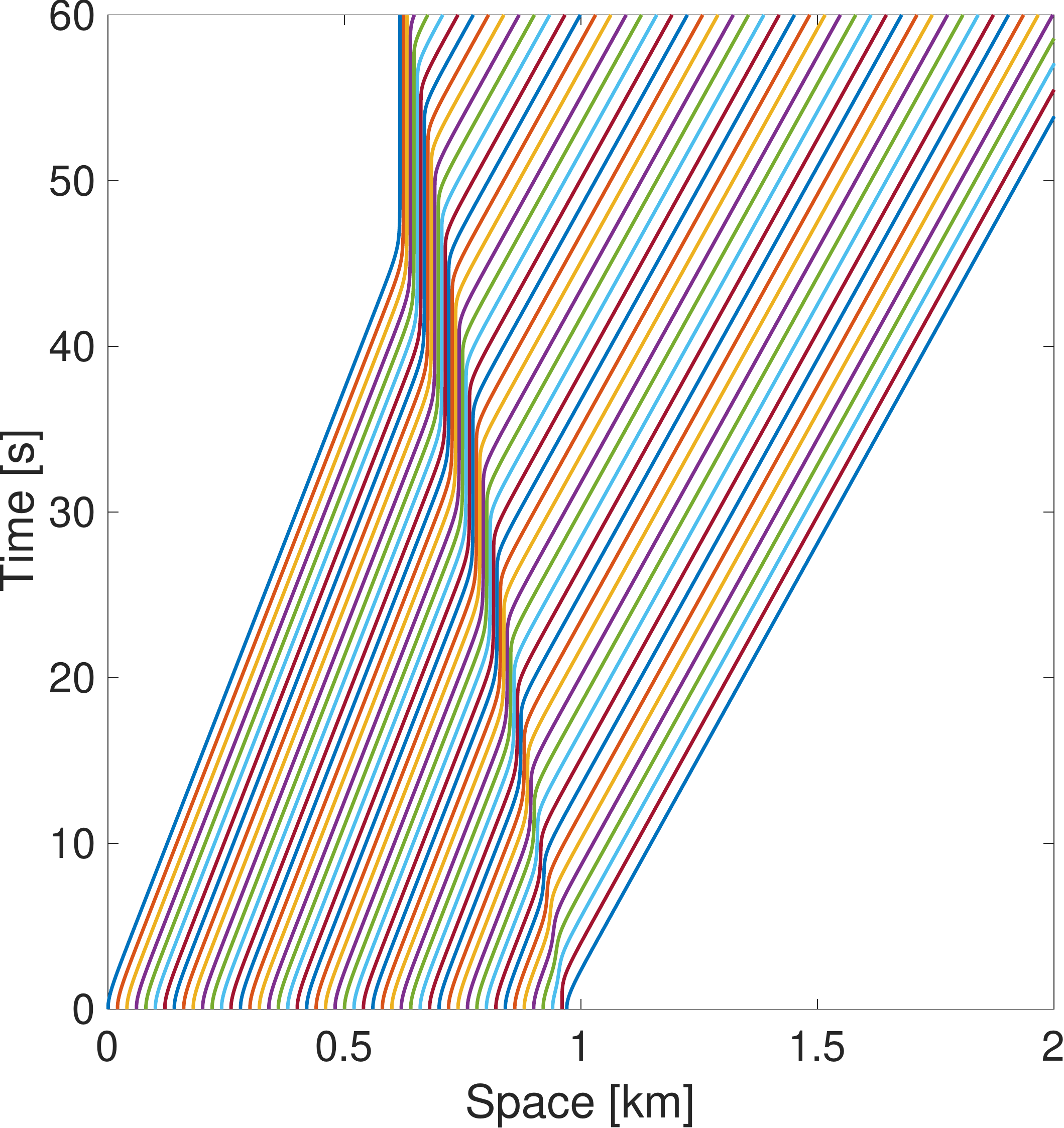}\qquad
\includegraphics[width=0.31\columnwidth]{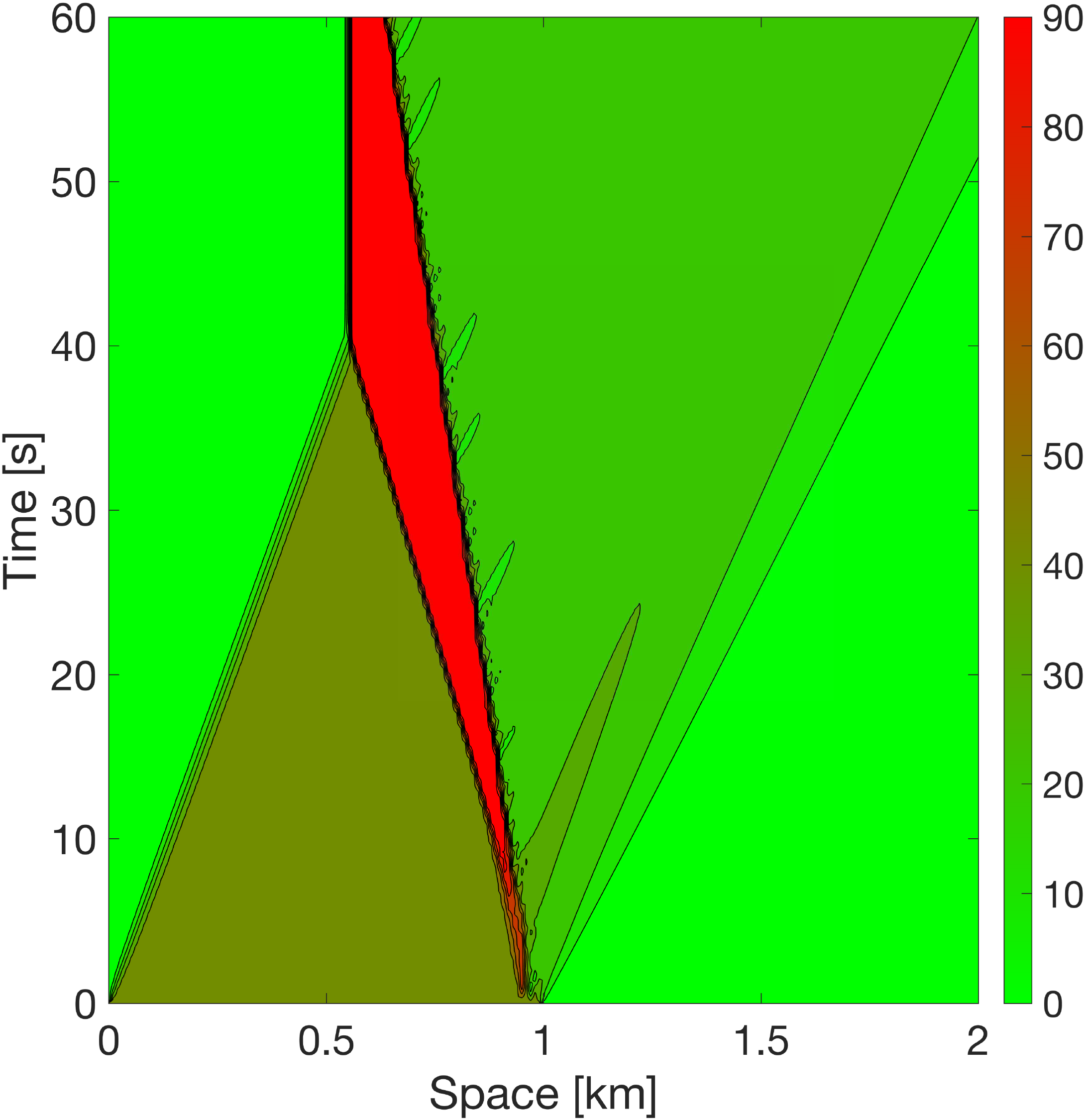}
\caption{Left: Trajectories generated by the second order microscopic model used in \cite{cristiani2019DCDSB}.  Right: Evolution in space and time of the macroscopic density $\rho=\rho(x,t)$ obtained by the multiscale model \eqref{eq:micromacro}-\eqref{eq:utrue} with the $N=50$ trajectories given on the left.}
\label{fig:microSim}
\end{figure}
\begin{figure}[h!]
\centering
\includegraphics[width=0.3\columnwidth]{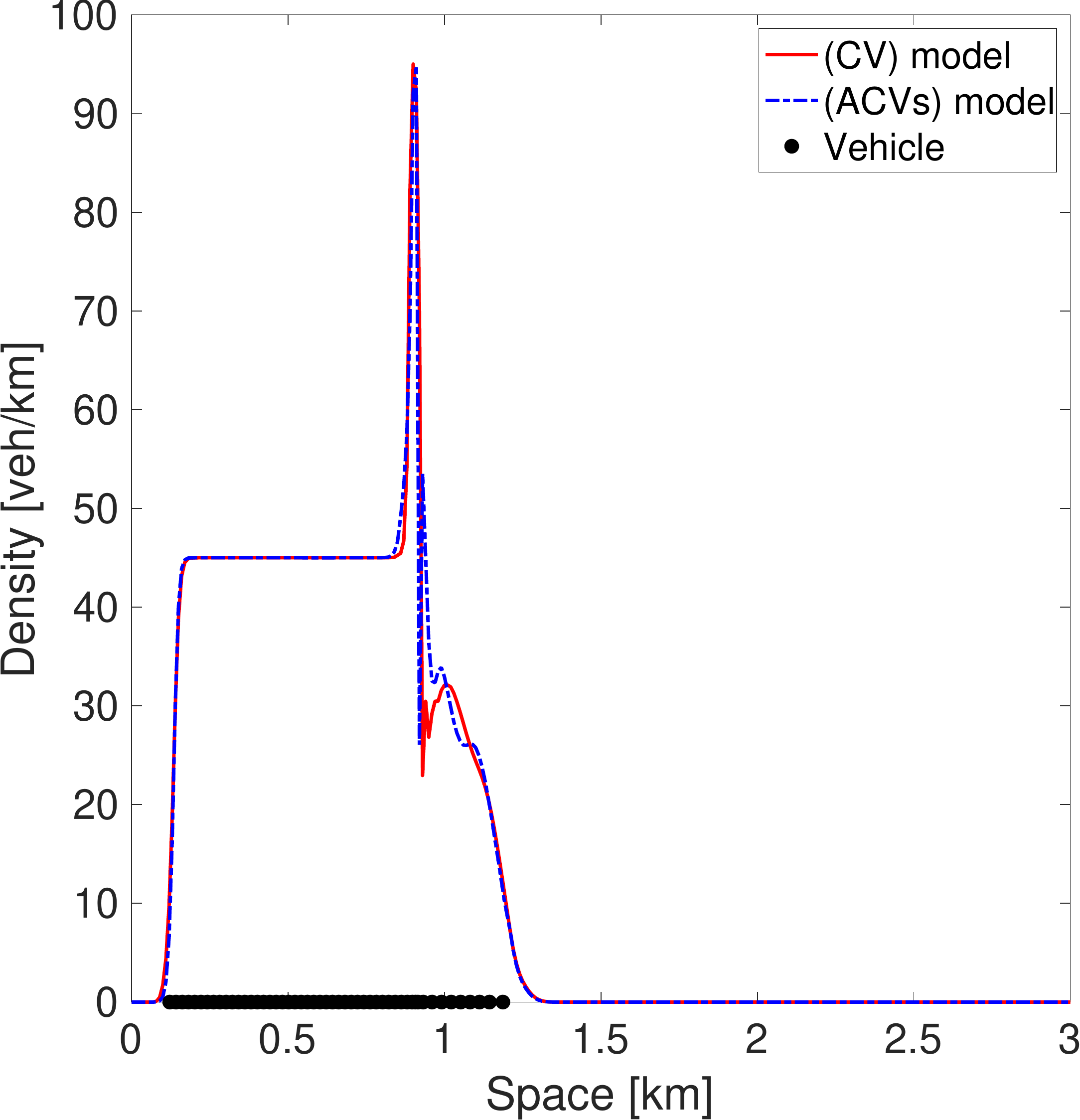}\qquad
\includegraphics[width=0.3\columnwidth]{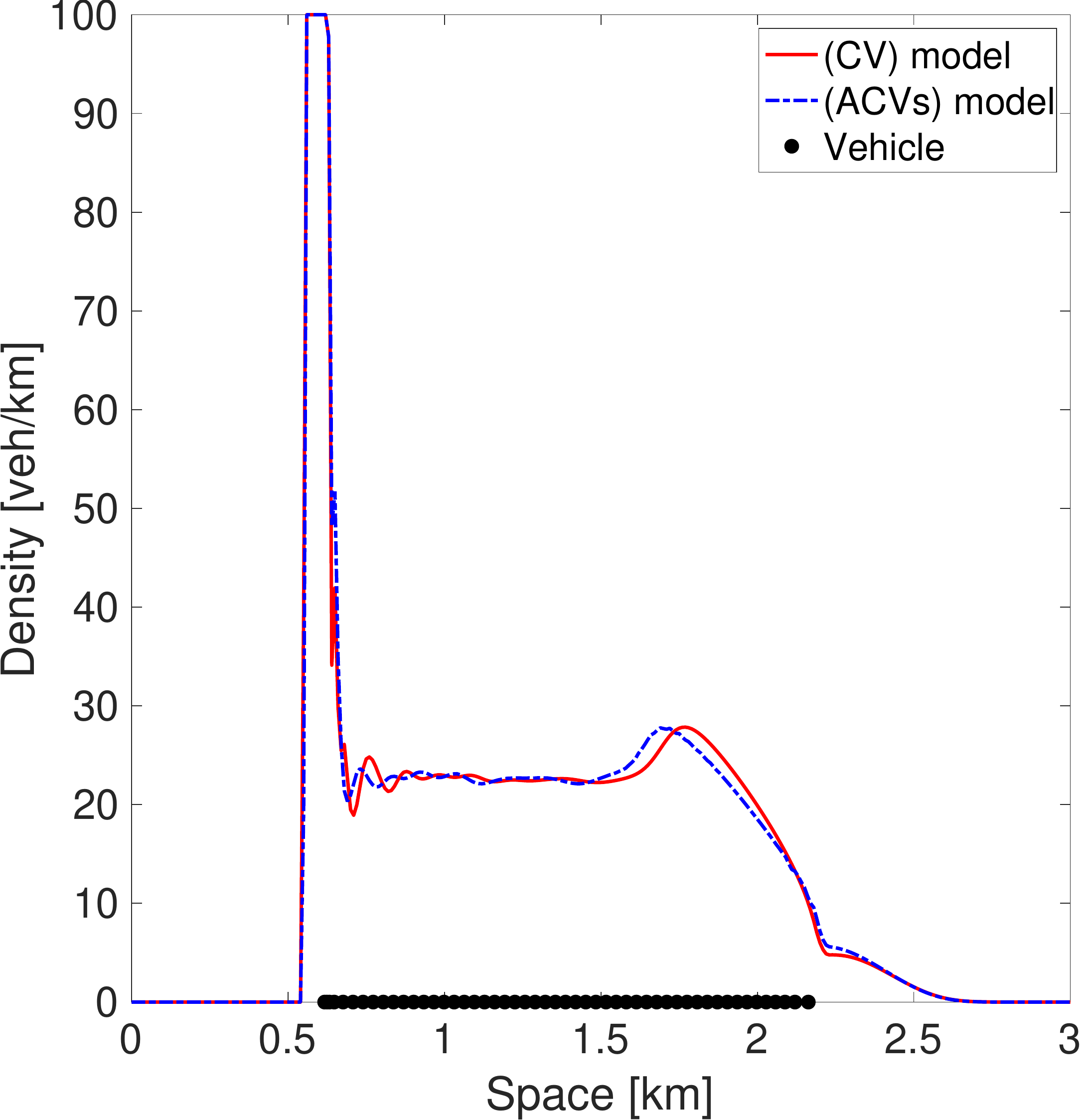}
\caption{Evolution of the density $\rho=\rho(x,t)$ along a road at time $t=50\dt$ (left) and $t=T$ (right), obtained by solving \eqref{eq:micromacro} with $\uu$ defined in \eqref{eq:utrue} and \eqref{eq:umedio}, represented by red-solid and blue-dotted lines, respectively. }
\label{fig:microSim2}
\end{figure}
\begin{figure}[h!]
\centering
\includegraphics[width=0.3\columnwidth]{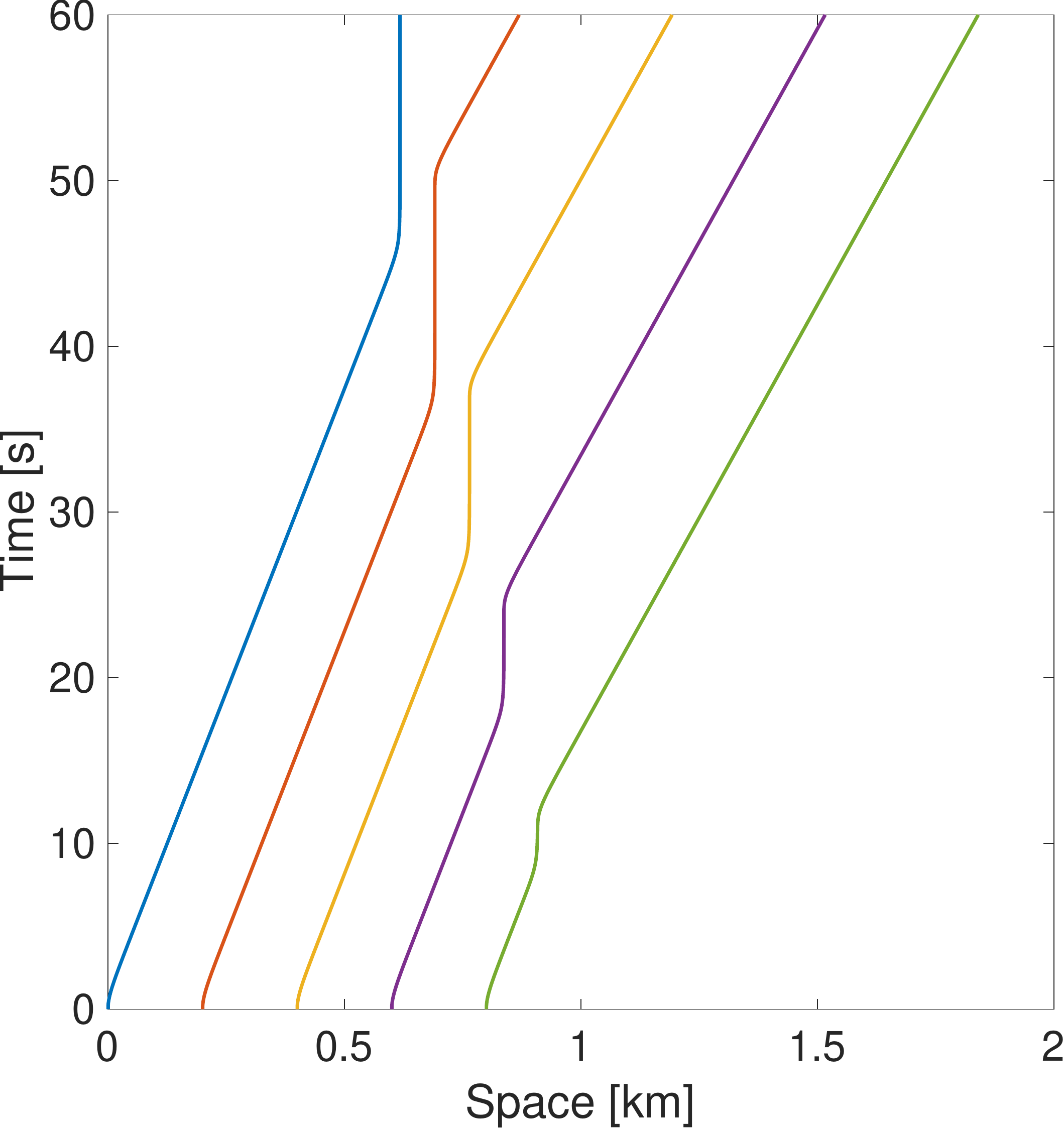}\qquad
\includegraphics[width=0.31\columnwidth]{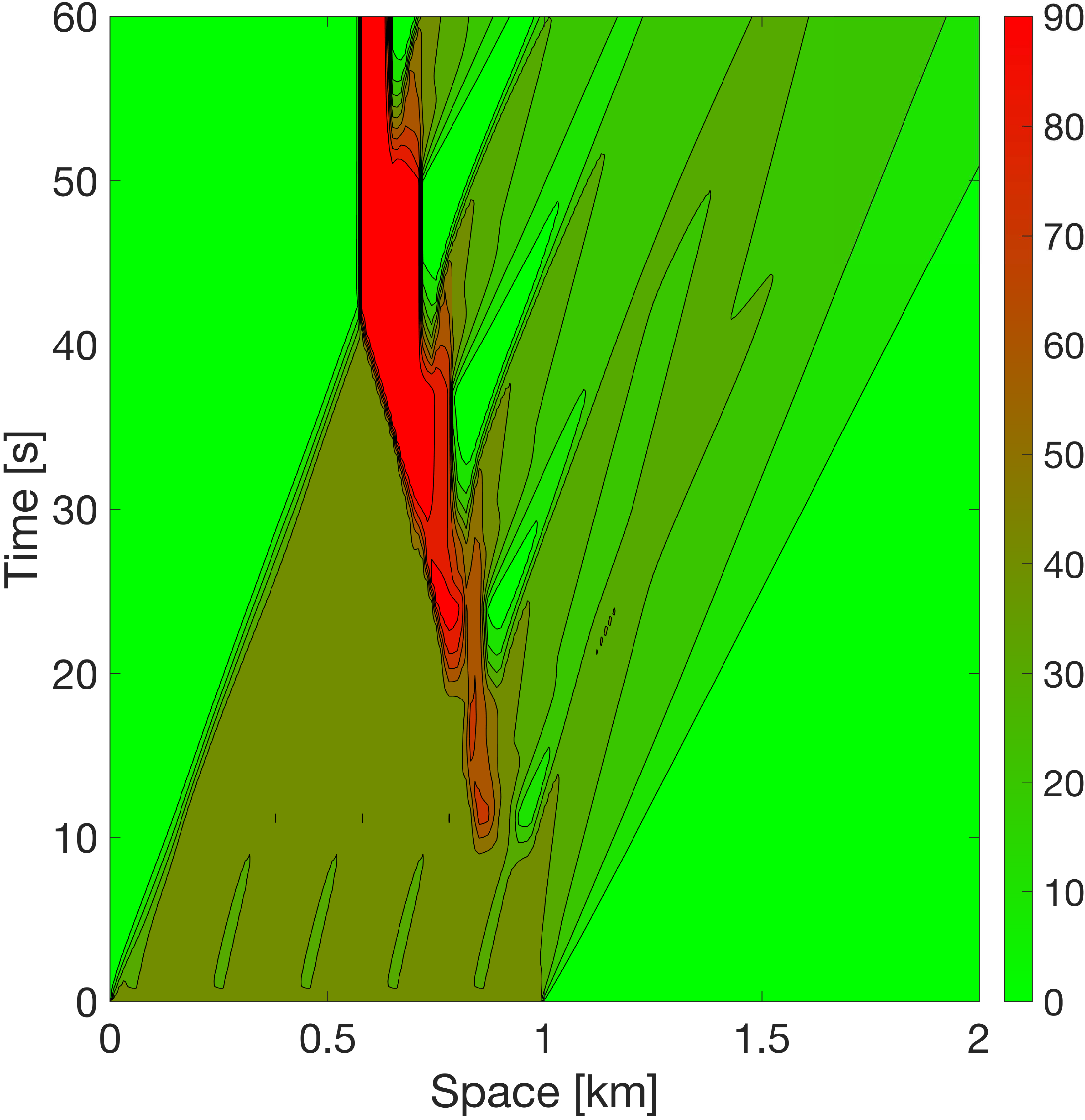}
\caption{Left: A sample of the trajectories drawn in Figure \ref{fig:microSim}-left.  Right: Evolution in space and time of the density $\rho=\rho(x,t)$ obtained by solving \eqref{eq:micromacro}-\eqref{eq:utrue} with only the $N=5$ trajectories given on the left.}
\label{fig:microSim3}
\end{figure}

\section{A macroscopic second order model embedding Lagrangian data} \label{sec:modello2}
In this section we apply the ideas introduced in Section \ref{sec:Ndata} to the Generic Second Order Models (GSOM) \cite{lebacque2007TTT}, described by 
\begin{align}
	\begin{split}
		&\begin{cases}
			\de_t\rho+\de_x(\rho \vv) = 0\\
			\de_t w+\vv\de_x w = 0,
		\end{cases}
	\end{split}
	\label{eq:modello2}
\end{align}
where $\rho(x,t)$ is the density of vehicles and $w(x,t)$ is a property of vehicles advected by the velocity function $\vv$. 
Similarly to the first order \ref*{vicina} case, given $N$ vehicles with known trajectory $\bp=(p_1(t),\ldots,p_N(t))$, the velocity function $\vv=\vv(x,t,\rho,w;\bp)$ is defined as previously given in the introduction by
\begin{equation}\label{eq:vel2ord}
	\vv(x,t,\rho,w;\bp) = \disp\chi(x-p_{\kappa}(t))\frac{2 \dot{p}_{\kappa}(t)v(\rho,w)}{\dot{p}_{\kappa}(t)+v(\rho,w)}+\big(1-\chi(x-p_{\kappa}(t))\big)v(\rho,w) 
\end{equation}
if $\dot{p}_{\kappa}(t)+v(\rho,w)\neq0$ and $\vv(x,t,\rho,w;\bp) = 0$ if $\dot{p}_{\kappa}(t)+v(\rho,w)=0$, with $\kappa=\kappa(x,t)$ given in \eqref{eq:kvicino}. 
Following \cite{FanSunPiccoliSeiboldWork2017}, we assume that the flux function $Q(\rho,w)=\rho v(\rho,w)$ satisfies these properties: $Q(\rho,w)\in \cuno$ and $Q(\rhomax,w) = 0$ for each $w$; the flux is strictly concave with respect to $\rho$ and it is non-decreasing with respect to $w$ for each $\rho$.
The flux function $Q$ defines the velocity function as $v(\rho,w) = Q(\rho,w)/\rho$. The assumptions on $Q$ imply that the function $v$ is regular and it is non-increasing with respect to $\rho$. 

As a consequence of the properties of $Q$ and $v$, the flux function $\qq(x,t,\rho,w)=\rho\vv(x,t,\rho,w;\bp)$ with $\vv$ in \eqref{eq:vel2ord} is strictly concave with respect to the density $\rho$. 
Thus, as in the first order case, there exists a unique density value $\sigma=\sigma(x,t,w)$ where $\qq$ reaches its maximum $\qq^{\max}(x,t,w)=\qq(x,t,\sigma,w)$ with respect to $\rho$ and we can extend the definition of sending and receiving functions in \eqref{eq:SRfunct} as 
\begin{equation*}
	S = \begin{cases}
		\qq(x,t,\rho,w) & \text{if $\rho\leq\sigma(x,t,w)$}\\
		\qq^{\max}(x,t,w) & \text{if $\rho>\sigma(x,t,w)$}
	\end{cases}
	\qquad 
	R = \begin{cases}
		\qq^{\max}(x,t,w) & \text{if $\rho\leq\sigma(x,t,w)$}\\
		\qq(x,t,\rho,w) & \text{if $\rho>\sigma(x,t,w)$,}
	\end{cases}
\end{equation*}
with $\sigma(x,t,w)$ critical density obtained by numerically solving the non-linear equation $\de_{\rho}\qq(x,t,\sigma,w)=0$, for each triple of values $(x,t,w)$, see Remark \ref{remark:sigma}.

\medskip 

Under the same numerical setting introduced in Section \ref{sec:Ndata}, for $j=0,\ldots,N_x-1$, $n=0,\ldots,N_t$, we extend the scheme
\eqref{eq:schema} to 
\begin{equation}\label{eq:schema2}
\begin{cases}
	\rho^{n+1}_{j} = \rho^{n}_{j}-\displaystyle\frac{\dt}{\dx}(Q^{n}_{j+1/2}-Q^{n}_{j-1/2})\medskip\\
	w^{n+1}_{j} = w^{n}_{j}-\displaystyle\frac{\dt}{\dx}\,\vv^n_j\, (w^{n}_{j}-w^{n}_{j-1}),
\end{cases}
\end{equation}
with $\vv^n_j = \vv(x_j,t^n,\rho^n_j,w^n_j;\bp)$ and 
\begin{equation*}\label{eq:numflux}
	Q^{n}_{j-1/2}=\min\{S(x_{j-1},t^n,\rho^{n}_{j-1},w^{n}_{j-1}),R(x_j,t^n,\rho^{n}_{j},w^{n}_{j})\}.
\end{equation*}
Since the variable $w$ is advected forward by the motion of vehicles travelling with positive velocity, the second equation has been approximated with an \textit{up-wind} finite difference scheme. 

With computations similar to the first order case given in Proposition \ref{prop:monotonia1} and \ref{prop:CFL1}, the 
following CFL condition
\begin{equation*}
	\dt\leq\frac{\dx}{\max_{(x,t,\rho,w)}\vv(x,t,\rho,w)}
\end{equation*}
guarantees the stability of the scheme \eqref{eq:schema2}. Specifically, we get 
\begin{equation}\label{eq:CFL2}
	\dt\leq 
	\disp\frac{\dx}{\max\big\{\sup_{(x,t)}\dot{p}_{\kappa(x,t)}(t), \vmax\big\}},
\end{equation}
where $\dot{p}_\kappa$ is the velocity of the $\kappa$-th vehicle, with $\kappa$ defined in \eqref{eq:kvicino}, and $\vmax$ is the maximum velocity value given by the analytical model, i.e. $\vmax=\max_w v(0,w)$. 

\smallskip

The finite volume formulation given in \eqref{eq:schema2} allows for easy handling of flow and velocity data from fixed sensors. In order to exploit this information, the sensor data are used as boundary conditions on the incoming side of a road. Usually, sensor data are aggregated per minute
 and need to be interpolated to be available at each numerical time step $t^n$. Let $\qq^{n}_{\sens}$ and $\vv^n_{\sens}$ be the sensor flux and speed data after interpolation, respectively.
There are two main ways to include the real data into the numerical scheme \eqref{eq:schema2}: i) replace $Q^{n}_{-1/2} = \qq^{n}_{\sens}$ in the first cell $[x_{-1/2},x_{1/2}]$ making sure that the sensor datum is an admissible value with respect to the analytical bounds; ii) fix the density value $\rho^n_{0}=\qq^{n}_{\sens}/\vv^n_{\sens}$ and use it in the calculation of $Q^{n}_{-1/2}$.
In both cases, once the density at the left boundary has been computed, we estimate $w^{n}_{0}$ such that $v(\rho^{n}_{0},w^{n}_{0})=\vv^{n}_{\sens}$.

In the following, we employ the first approach i) that uses the sensor data in an almost pure form and avoids the additional approximation needed to calculate the density.

\subsection{Acceleration estimate}\label{sec:acc}
One of the main advantages of second order models is their greater accuracy in approximating velocity, which allows improvements in the estimate of vehicle acceleration.

Let us consider the model \eqref{eq:modello2}, with the velocity function $\vv$ in \eqref{eq:vel2ord}.
We compute the acceleration function $a=a(x,t)$ as the material derivative ($ D/Dt = d/dt + v \cdot d/dx$) of $\vv=\vv(x,t,\rho(x,t),w(x,t);\bp)$ with respect to the time $t$:
\begin{equation*}
	a(x,t)=\frac{D\vv}{Dt} = \vv_t + \vv_{\rho}\rho_{t} + \vv_{w}w_{t} + \Big(\vv_x + \vv_{\rho}\rho_{x}+ \vv_w w_x\Big)\vv(x,t).
\end{equation*}
Therefore, from \eqref{eq:modello2},
\begin{align}\label{eq:acc}
	 a(x,t) 
	& = \vv_{\rho}(\rho_{t}+\vv\rho_{x})+\vv_{w}(w_{t}+\vv w_{x})+\vv_{t}+\vv \vv_{x} = -\rho\vv_{\rho} \vv_{x}+\vv_{t}+\vv \vv_{x}.
\end{align}
For $(\dot{p}(t),v(\rho,w))\neq(0,0)$ we have
\begin{align*}
	\vv_{\rho} &= \chi(x-p(t))\frac{2v_{\rho}\dot{p}^{2}}{(\dot{p}+v)^{2}}+(1-\chi(x-p(t)))v_{\rho}\\
	\vv_{x} & =  \chi^{\prime}(x-p(t))\frac{v(\dot{p}-v)}{\dot{p}+v}\\
	\vv_{t} & = \chi^{\prime}(x-p(t))\dot{p}v\frac{v-\dot{p}}{\dot{p}+v}+\chi(x-p(t))\frac{2\ddot{p}v^{2}}{(\dot{p}+v)^{2}},
\end{align*}
where $\chi^\prime(\cdot)$ is the first derivative of $\chi$ with respect to its argument. Let us observe that the acceleration formula directly depends on the acceleration $\ddot{p}(t)$ of the vehicle that is influencing the speed function of the model.

\section{Emissions estimate from traffic quantities}\label{sec:emiModels}

Here we focus on the estimation of pollutants production at ground level due to vehicular traffic, whose impact on air quality is a long-standing and complex problem. Following \cite{balzotti2021DCDS}, we propose a computational approach that combines the traffic simulation model with an emission one.
We focus our attention on $\nox$ emissions, which are dangerous to human health and are precursors of the ozone, that also has negative effects on health \cite{seinfeld2016JWS, wayne1991CP}.

Once approximated the density, velocity and acceleration of vehicles, we use them to estimate the emissions produced by vehicular traffic. Here we consider two microscopic emission models and we follow \cite{balzotti2021DCDS} to extend them to macroscopic variables. 

Let us consider a vehicle $i$ moving at speed $v_{i}$ and subject to a certain acceleration $a_{i}$ at time $t$. The first microscopic emission model considered (see \cite{panis2006elsevier}), estimates the emissions associated with the motion of the vehicle as
\begin{equation}\label{eq:emissioniMicro}
	E_{i}(t) = \max\{E_{0},f_1 + f_2 v_{i}(t) + f_3 v_{i}(t)^2 + f_4 a_{i}(t) + f_5 a_{i}(t)^2 + f_6 v_{i}(t) a_{i}(t)\}
\end{equation}
where $f_{1}$--$f_{6}$ are coefficients that depend on the type of vehicle, fuel and pollutant considered. In Table \ref{tab:panis} we collect the coefficients associated with $\nox$ emissions for a petrol car, and we refer to \cite[Table 2]{panis2006elsevier} for an exhaustive list of coefficients related to other types of vehicles and pollutants.
\begin{table}[h]
\small
\centering
\begin{tabular}{ccccccc}\toprule
Vehicle mode & $f_1$ $\left[\frac{\mygram}{\mysecond}\right]$ & $f_2$ $\left[\frac{\mygram}{\mymeter}\right]$ & $f_3$ $\left[\frac{\mygram\, \mysecond}{\mymeter^{2}}\right]$ & $f_4$ $\left[\frac{\mygram\, \mysecond}{\mymeter}\right]$& $f_5$ $\left[\frac{\mygram\, \mysecond^{3}}{\mymeter^{2}}\right]$& $f_6$ $\left[\frac{\mygram \, \mysecond^{2}}{\mymeter^{2}}\right]$ \\
\midrule
If $a_i (t) \geq -0.5\,\mymeter\per\mysecond^2$ &  6.19e-04  & 8e-05  & -4.03e-06  & -4.13e-04  & 3.80e-04  & 1.77e-04\\
If $a_i (t) <-0.5\,\mymeter\per\mysecond^2$ &  2.17e-04  & 0  & 0  & 0  & 0  & 0\\\bottomrule
\end{tabular}
\caption{Coefficients in equation \eqref{eq:emissioniMicro} for $\nox$ emissions of an internal combustion engine car.}
\label{tab:panis}
\end{table}
\\
The second emission model is based on \cite{ahn1999TRB}. For
\begin{equation*}
	\boldsymbol{v}_{i}(t) = [1\quad v_{i}(t)\quad v_{i}^{2}(t)\quad v_{i}^{3}(t)]^{T}, \qquad 
	\boldsymbol{a}_{i}(t) = [1\quad a_{i}(t)\quad a_{i}^{2}(t)\quad a_{i}^{3}(t)]^{T},
\end{equation*}
the emissions associated with the vehicle $i$ are estimated by
\begin{equation}\label{eq:emissioniMicro2}
	E_{i}(t) = \exp(\boldsymbol{v}_{i}^{T}(t)\cdot P\cdot \boldsymbol{a}_{i}(t)),
\end{equation}
where $P$ is the following $4\times4$ matrix (see \cite[Appendix A1]{zegeye2011model})
\begin{equation*}
	P = 0.01\begin{bmatrix}
	-1488.31 & 83.4524 & 9.5433 & -3.3549\\
	15.2306 & 16.6647 & 10.1565 & -3.7076\\
	-0.1830 & -0.4591 & -0.6836 & 0.0737\\
	0.0020 & 0.0038 & 0.0091 & -0.0016
	\end{bmatrix}.
\end{equation*}

To extend these two models to the macroscopic case, we consider $M$ vehicles in a stretch of road moving at the same speed $\bar v(t)$ and subject to the same acceleration $\bar a(t)$. The emission rates associated with traffic in \eqref{eq:emissioniMicro} are modified in
\begin{equation}\label{eq:emissioni}
	E(t) = M\max\{E_{0},f_1 + f_2 \bar v(t) + f_3 \bar v(t)^2 + f_4 \bar a(t) + f_5 \bar a(t)^2 + f_6 \bar v(t) \bar a(t)\},
\end{equation}
while from \eqref{eq:emissioniMicro2} we obtain 
\begin{equation}\label{eq:emissioni2}
	E(t) = M \exp(\boldsymbol{\bar v}^{T}(t)\cdot P\cdot \boldsymbol{\bar a}(t)),
\end{equation}
with 
\begin{equation*}
	\boldsymbol{\bar v}(t) = [1\quad \bar v(t)\quad \bar v^{2}(t)\quad \bar v^{3}(t)]^{T}, \qquad 
	\boldsymbol{\bar a}(t) = [1\quad \bar a(t)\quad \bar a^{2}(t)\quad \bar a^{3}(t)]^{T}.
\end{equation*}

Hereafter we refer to the emission model \eqref{eq:emissioni} as the \emph{E-max-formula} and to \eqref{eq:emissioni2} as the \emph{E-exp-formula}. In Figure \ref{fig:confrontoMaxExp} we show a comparison between these two formulations at a microscopic level, observing that in this example their results are quite similar.
 The details of this numerical test are described in Section \ref{sec:emissioni}.

\begin{figure}[h!]
\centering
\includegraphics[width=0.3\columnwidth]{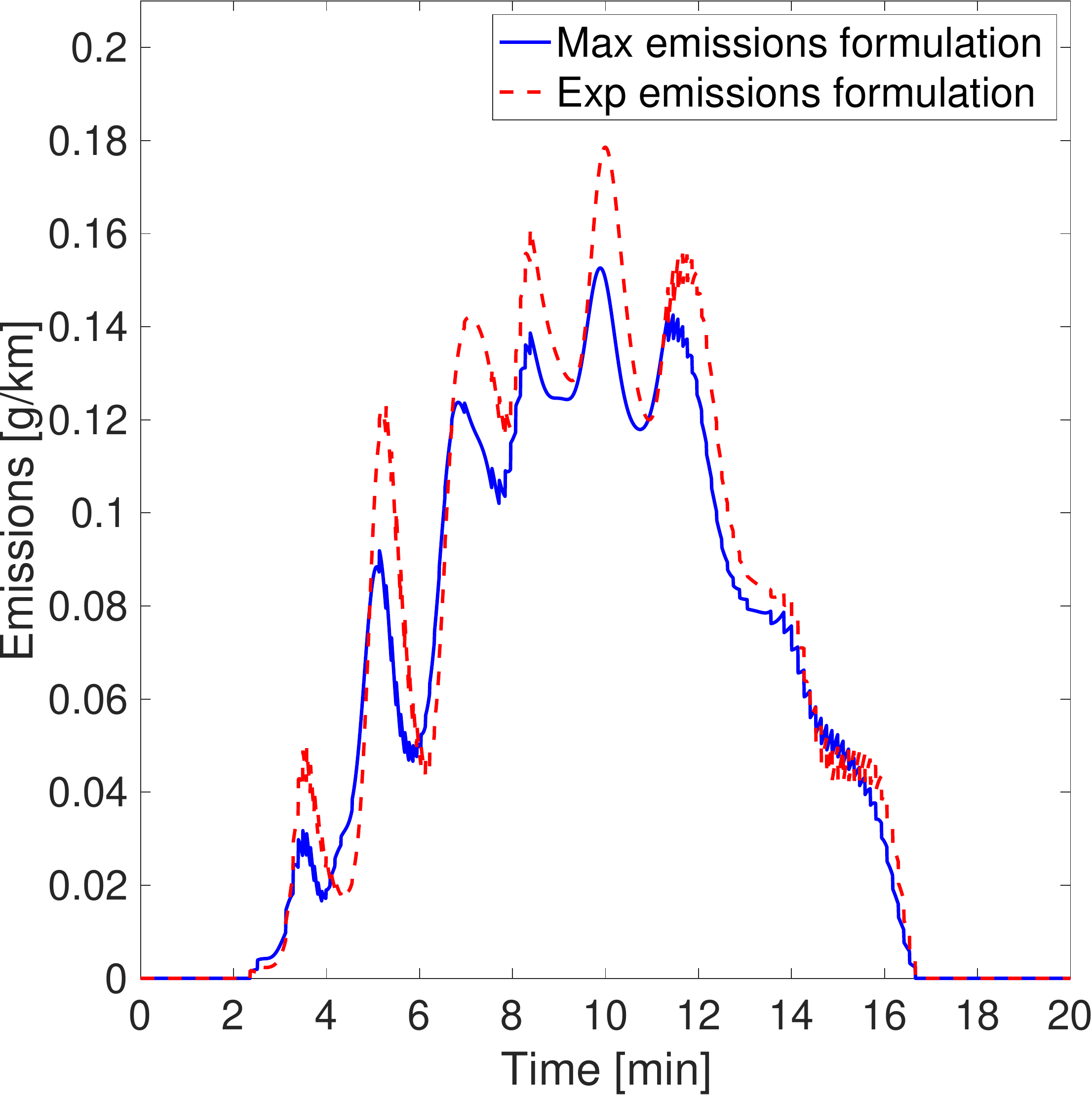}
\caption{Comparison between the microscopic \emph{E-max-formula} and the \emph{E-exp-formula} (see Section \ref{sec:emissioni}).}
\label{fig:confrontoMaxExp}
\end{figure}


\section{Numerical tests}\label{sec:test2O}
This section is devoted to the numerical tests. First, we focus on the second order model \eqref{eq:modello2}, then we analyze emission models and finally, we propose an application on a road network representing a portion of the Italian A4 motorway, combining real GPS data and fixed sensors.

\subsection{Traffic dynamics}\label{sec:numtraffic}
Let us begin with a test to illustrate the features of the second order traffic model \eqref{eq:modello2}-\eqref{eq:vel2ord}. First of all, we choose the Collapsed-Generalized-Aw-Rascle-Zhang (CGARZ) model \cite{FanSunPiccoliSeiboldWork2017} among the family of GSOM. 
In the CGARZ model the definition of the flow function is characterized by the distinction between the free and congested flow traffic regime. 
Hence, we define the flux function as
\begin{equation*}
	Q(\rho,w) = \begin{cases}
		Q_{f}(\rho) &\quad\text{if $\rho\leq\rhof$}\\
		Q_{c}(\rho,w) &\quad\text{if $\rho>\rhof$}
	\end{cases}
\end{equation*}
for a given density threshold $\rhof$ separating the two phases and
\begin{equation*}
Q_f(\rho) = g(\rho), \quad Q_c(\rho,w) = (1-\theta(w))f(\rho)+\theta(w)g(\rho),
\end{equation*}
where, following \cite{balzotti2021DCDS}, we set
\begin{equation}\label{eq:g}
	f(\rho) = \frac{\vmax}{\rhomax}\rhof(\rhomax-\rho), \quad
	g(\rho) = \frac{\vmax}{\rhomax}\rho(\rhomax-\rho), \quad
	\theta(w) = \frac{w-\wl}{\wr-\wl},
\end{equation}
for $\wl$ and $\wr$ minimum and maximum given value of $w$, respectively.
With these choices, the functions $Q(\rho,w)$ and $v(\rho,w)=Q(\rho,w)/\rho$ satisfy the hypotheses required by model \eqref{eq:modello2}.

The numerical simulations are performed with the CTM scheme \eqref{eq:schema2}.
We set $\rhomax=100\,\vehkm$, $\rhof=10\,\vehkm$, $\vmax=90\,\kmh$, $\wl=g(\rhof)$ and $\wr=g(\rhomax/2)$. The parameters $a$, $b$, $\dx$, $T$, $\dt$, the function $\chi$ and its parameters $\ell$ and $L$, are the same used for the test proposed in Section \ref{sec:Ndata}. 

We consider three vehicles and simulate their trajectory as follows
\begin{equation*}
p_{1}(t) = 0.5+15 t, \quad p_{2}(t) = 1+20 t, \quad p_{3}(t) = 1.2+22 t.
\end{equation*} 
We then fix the initial data
\begin{equation*}
	\rho_{0}(x) = 20\,\vehkm, \qquad w_{0}(x) = \begin{cases}
		\wr&\quad\text{if $x<1.5$}\\
		\wl&\quad\text{if $x\geq1.5$.}
	\end{cases}
\end{equation*}
Figure \ref{fig:test2} shows the final time of the simulation. We observe that the discontinuity in the variable $w$ generates a rarefaction wave on the right half of the road. The presence of the GPS trajectories, instead, produces non-classical shocks which are well captured by the CTM scheme. 

\begin{figure}[h!]
\centering
\includegraphics[width=0.3\columnwidth]{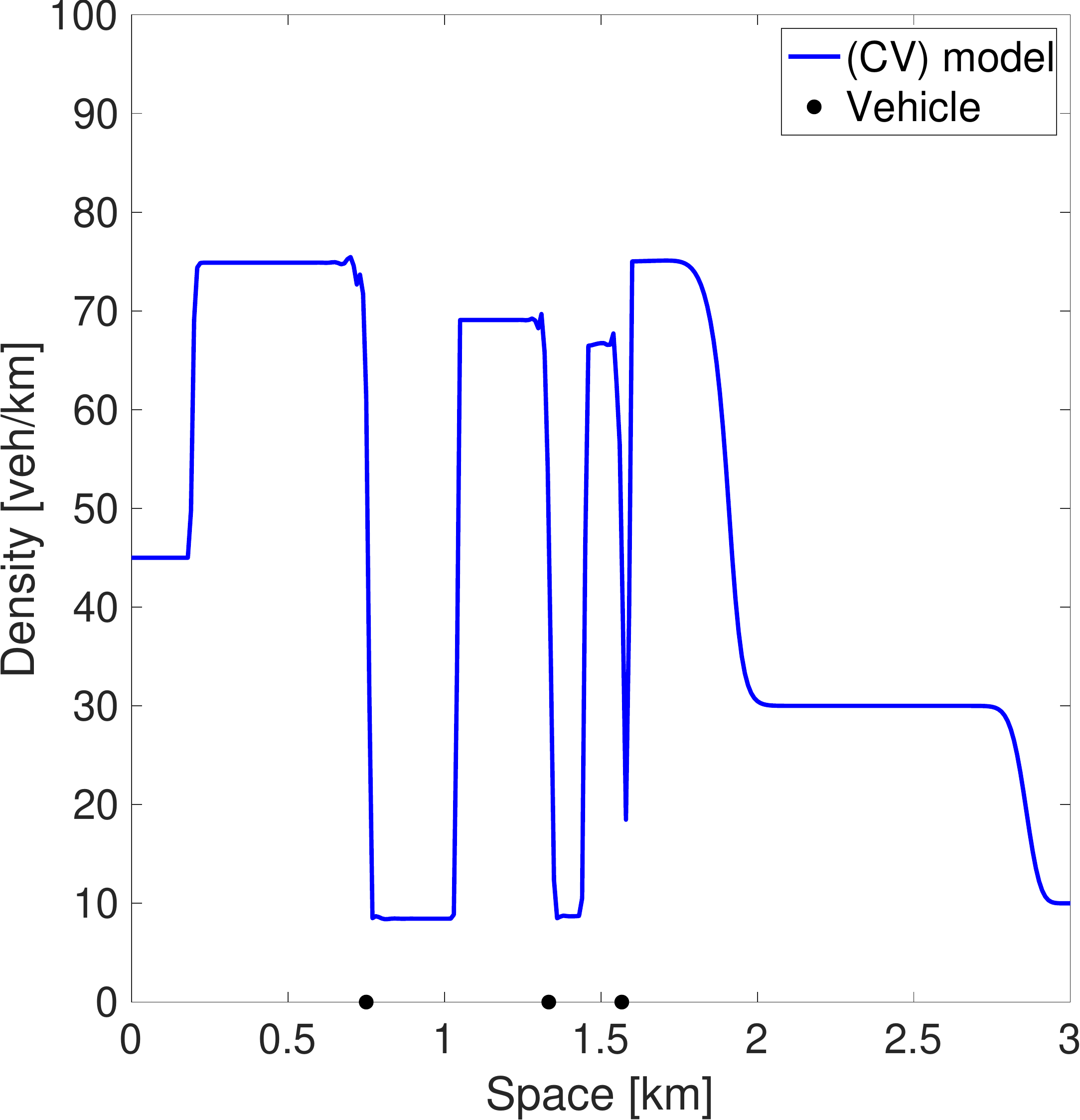}
\caption{Effects of monitored slow vehicles on the second order model \eqref{eq:modello2}, see Section \ref{sec:numtraffic}.}
\label{fig:test2}
\end{figure}

\subsection{Application to emissions}\label{sec:emissioni}
In this section we propose a test to show how the integration of GPS data impacts the estimate of emissions due to vehicular traffic. To this end, we consider $N$ vehicles moving according to the following equations during the time interval $[0,T]$:
\begin{equation}\label{eq:traiettorie}
\begin{split}
	x_{i}(t) &= c\vmax\Big(t-\frac{T}{k_{i}\pi}\cos\Big(\frac{k_{i}\pi t}{T}\Big)+\frac{T}{k_{i}\pi}\Big)+x_{0,i}\\
	v_{i}(t) &= c\vmax\Big(\sin\Big(\frac{k_{i}\pi t}{T}\Big)+1\Big) \\
	a_{i}(t) &= c\vmax\frac{T}{k_{i}\pi}\cos\Big(\frac{k_{i}\pi t}{T}\Big),
\end{split}
\end{equation}
with $c = 0.3$, $ k_{i}=20+5(i-1)/(N-1)$ and $x_{0,i}=1+0.05(i-1)$. 
With this choice, the vehicles are initially equally spaced every $50\,\meter$ and then are subjected to different accelerations and decelerations, which we expect will affect the emissions. We fix $T=20\,\mymin$ and the length of the road equal to $10\,\km$. Our analysis is then focused on a stretch of the road, specifically the interval $[4\,\km, 7\,\km]$, which is empty both at the beginning and at the end of the simulation, see Figure \ref{fig:traiettorie}.

\begin{figure}[h!]
\centering
\includegraphics[width=0.3\columnwidth]{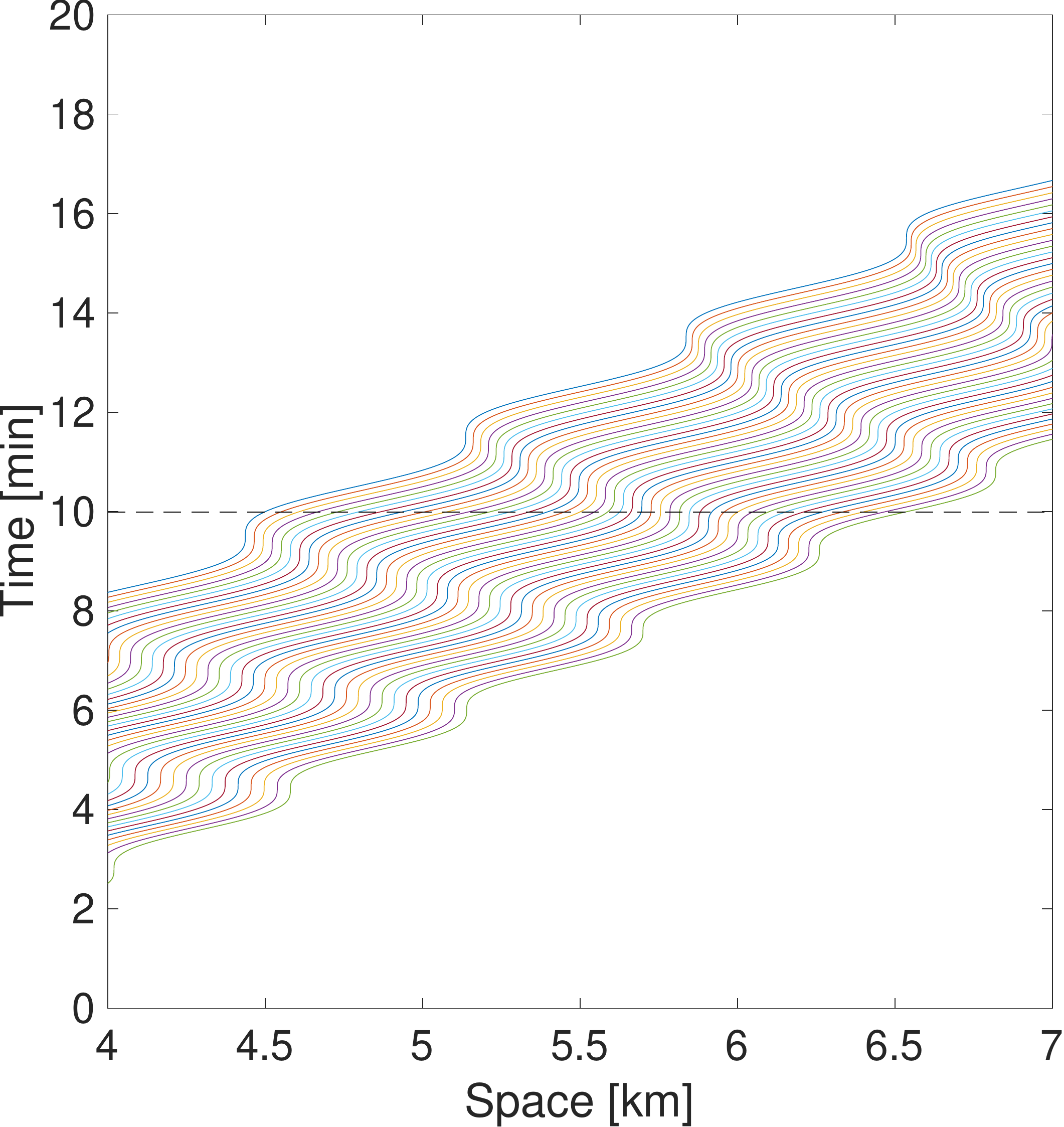}
\caption{Vehicle trajectories \eqref{eq:traiettorie} on a stretch of the road.}
\label{fig:traiettorie}
\end{figure}

In order to compare microscopic quantities with macroscopic ones, we first derive the macroscopic density and velocity from vehicle trajectories through a kernel density estimation (KDE). We use the Parzen-Rosenblatt window method \cite{parzen1962AMS,rosenblatt1956AMS}, which associates a density distribution with the position of the vehicles and then derives the global density by adding these distributions. More precisely, let $x_{i}(t)$ and $\nu_{i}(t)$ be the position and velocity of the $N$ vehicles, respectively. We define
\begin{align*}
\tilde\rho(x) = \sum_{i=1}^{N}\delta(x-x_{i}(t)),
\end{align*}
where $\delta$ is the Dirac delta function, so that
\begin{align*}
	\int_{\R}\tilde\rho(x)dx=\sum_{i=1}^{N}\int_{\R}\delta(x-x_{i}(t))dx=N.
\end{align*}
In order to recover the smooth density and velocity $\rho$ and $\nu$, we introduce the Gaussian kernel  
\begin{equation*}
K(x) = \frac{1}{2\pi h}\exp{\left(-\frac{x^{2}}{2h^{2}}\right)},
\end{equation*}
where $h$ is a smoothing parameters, which is chosen to obtain an almost constant density profile for equidistant vehicles. We then define
\begin{align*}
\rho(x,t) &= \int_{\R}K(x-\xi)\tilde\rho(\xi)d\xi = \sum_{i=1}^{N}K(x-x_{i}(t))\\
\nu(x,t) &= \frac{\sum_{i=1}^N \nu_i(t) K(x-x_i(t))}{\sum_{i=1}^N K(x-x_i(t))}.
\end{align*}
With this methodology we are able to reconstruct the initial density, $\rho_{0}=\rho(x,0)$, and velocity, $\nu_{0}=\nu(x,0)$, of the macroscopic model \eqref{eq:modello2}. Once $\rho_{0}$ and $\nu_{0}$ are known, we recover $w_{0}$ such that $v(\rho_{0},w_{0}) = \nu_{0}$.

The aim of this test is to compare the macroscopic emissions associated with the traffic model \eqref{eq:modello2} with the microscopic ones given by vehicle trajectories. Indeed, we estimate the traffic quantities $\rho^{n}_{j}$ and $v^{n}_{j}$ by means of the CTM scheme introduced in Section \ref{sec:modello2}, and the acceleration $a^{n}_{j}$ as described in Section \ref{sec:acc}. We then use these quantities in \eqref{eq:emissioni} and \eqref{eq:emissioni2} to obtain the macroscopic emissions with the two proposed \emph{E-max-formula} and \emph{E-exp-formula}, denoted by $E^{n}_{j}$ and $\widehat E^{n}_{j}$, respectively. At the same time, the microscopic velocity and acceleration given in \eqref{eq:traiettorie} are used in \eqref{eq:emissioniMicro} and \eqref{eq:emissioniMicro2} to recover the microscopic emissions $e^{n}_{i}$ and $\widehat e^{n}_{i}$ for each vehicle $i$. We then compute the total amount of emissions along the road at time $t^{n}$ as
\begin{equation*}
	E^{n}_{\tot} = \sum_{j=1}^{\nx}E^{n}_{j} \qquad\text{and}\qquad e^{n}_{\tot} = \sum_{i=1}^{N}e^{n}_{i},
\end{equation*}
similarly for $\widehat E^{n}_{\tot}$ and $\widehat e^{n}_{\tot}$.
Furthermore, we are interested in analyzing the effects on emissions of the number of tracked vehicles in model \eqref{eq:modello2}. To this end, let us introduce $\nn$ as the number of vehicles which influence the macroscopic traffic model. We perform three simulations: in the first one $\nn$ coincides with $N$, hence all the vehicles used for the microscopic dynamic influence the macroscopic model; then we consider only one vehicle out of two for model \eqref{eq:modello2}, hence $\nn=N/2$, and finally one in four, $\nn=N/4$. 

For our simulations, we fix a $10\,\km$ long road and a time horizon of 20 minutes. The model parameters are the same of the first numerical test of Section \ref{sec:numtraffic}. The bandwidth $h$ of the KDE is fixed as $h=100 \,\meter$. We use homogeneous Dirichlet boundary conditions on the left and homogeneous Neumann conditions on the right, which correspond to allow vehicles to leave the road. We then focus on the interval $[4\,\km, 7\,\km]$. 

On the top plots of Figure \ref{fig:testEmissioni} we compare the density of vehicles at $t=10\,\mymin$ computed with model \eqref{eq:modello2} and the one estimated through the KDE techniques. The black and fuchsia points represent vehicles position; in particular, the fuchsia points identify the vehicles which are tracked in model \eqref{eq:modello2}. From the plots we observe that the macroscopic model \eqref{eq:modello2} (blue-solid line) produces some peaks in correspondence of vehicles position, but the profile of the KDE density (red-dotted line) is quite well-captured. The accuracy of the results increases with the number of monitored vehicles $\nn$. The yellow line with circles represents the density associated with \eqref{eq:modello2} without considering the trajectory data, obtained by setting $\vv=v(\rho,w)$. We observe that the lack of GPS data produces a density profile completely different from the one recovered by microscopic data.

In the central (bottom) plots of Figure \ref{fig:testEmissioni} we compare $E^{n}_{\tot}$ ($\widehat E^{n}_{\tot}$), with and without trajectory data, and $e^{n}_{\tot}$ ($\widehat e^{n}_{\tot}$) during the whole simulation. We observe that the trend and the absolute values of emission rates using the \emph{E-max-formula} \eqref{eq:emissioni} are well captured by model \eqref{eq:modello2}, even when the number of tracked vehicles decreases. The total emission profile obtained from the model without GPS data is smoother than the other two since the dynamics do not take into account any vehicle acceleration or deceleration. On the other hand, the macroscopic \emph{E-exp-formula} \eqref{eq:emissioni2} well catches the behavior of emission rates but overestimates the absolute values.

\begin{figure}[h!]
\centering
\subfloat[][Density $\nn = 41$]{
\includegraphics[width=0.3\columnwidth]{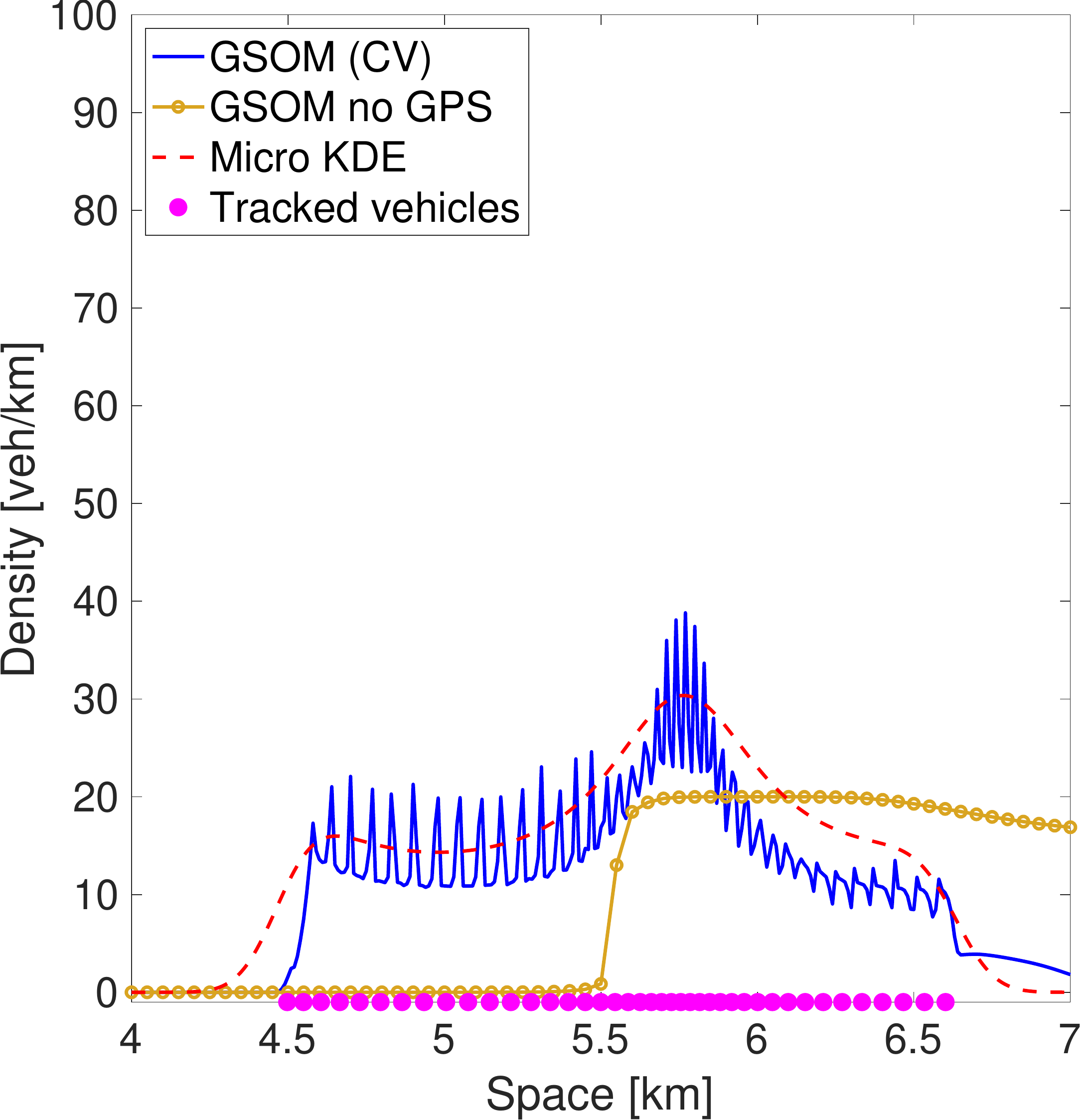}
}\,
\subfloat[][Density $\nn = 20$]{
\includegraphics[width=0.3\columnwidth]{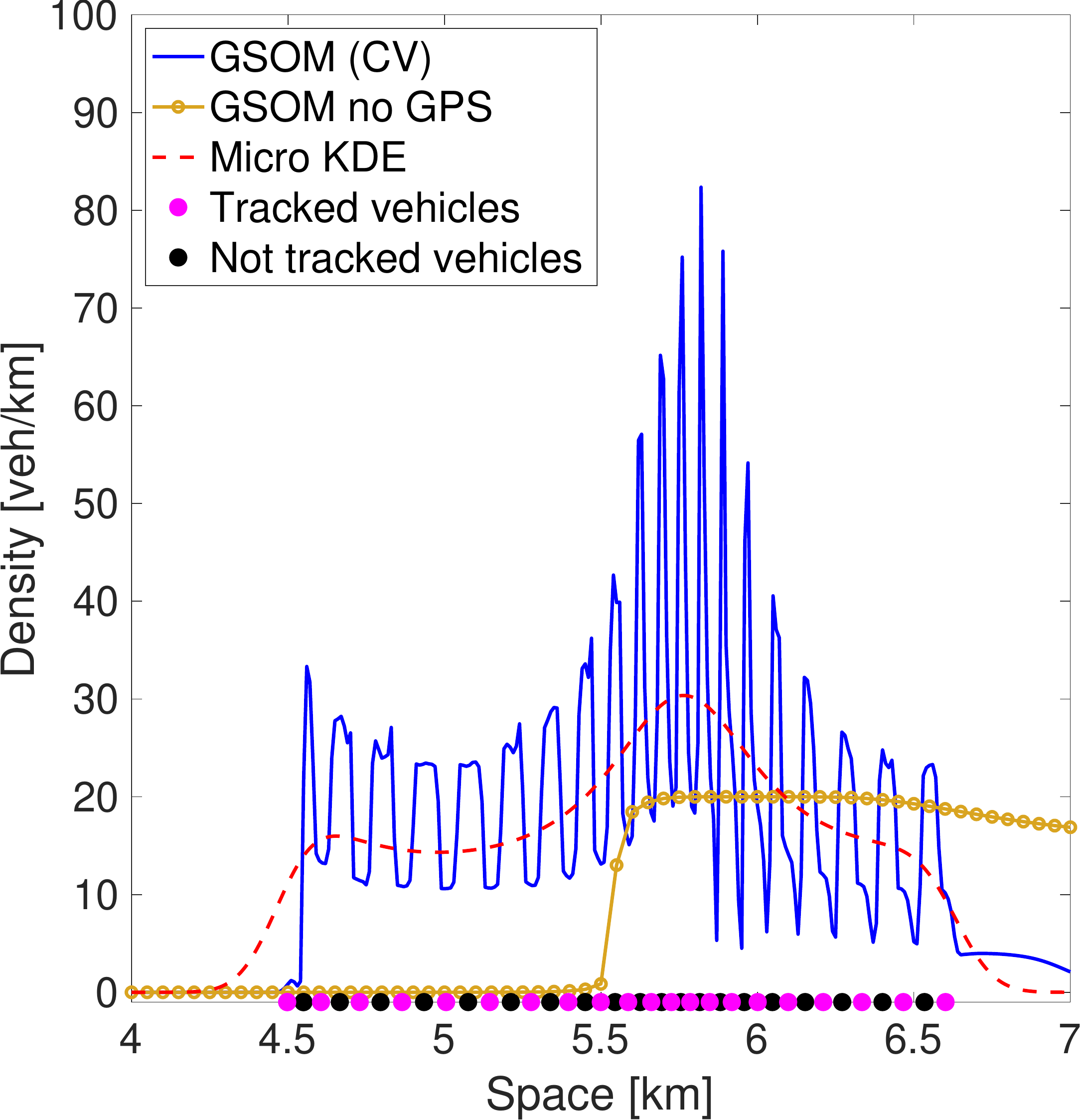}
}\,
\subfloat[][Density $\nn = 10$]{
\includegraphics[width=0.3\columnwidth]{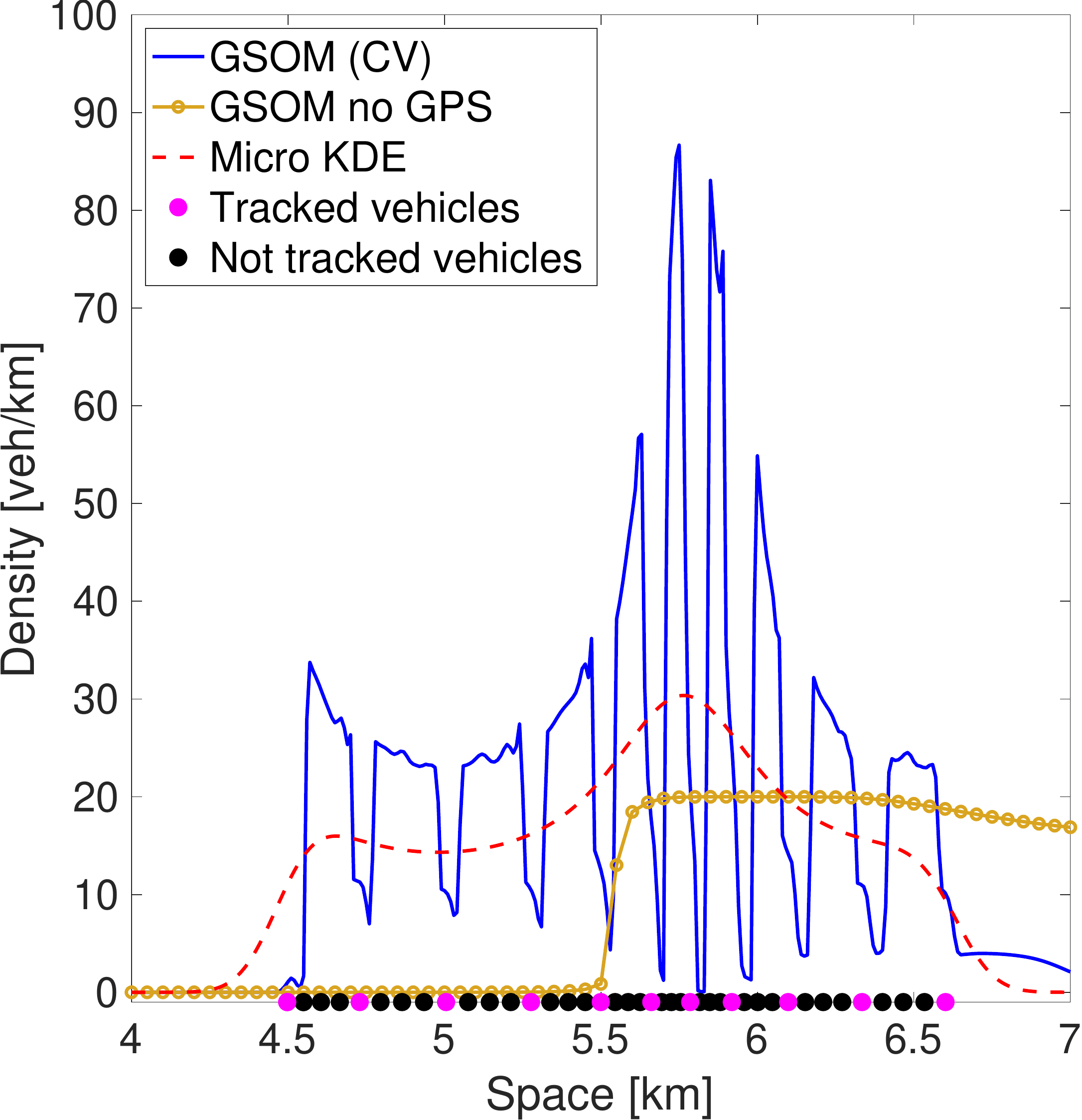}
}\\
\subfloat[][\emph{E-max-f} $\nn = 41$]{
\includegraphics[width=0.3\columnwidth]{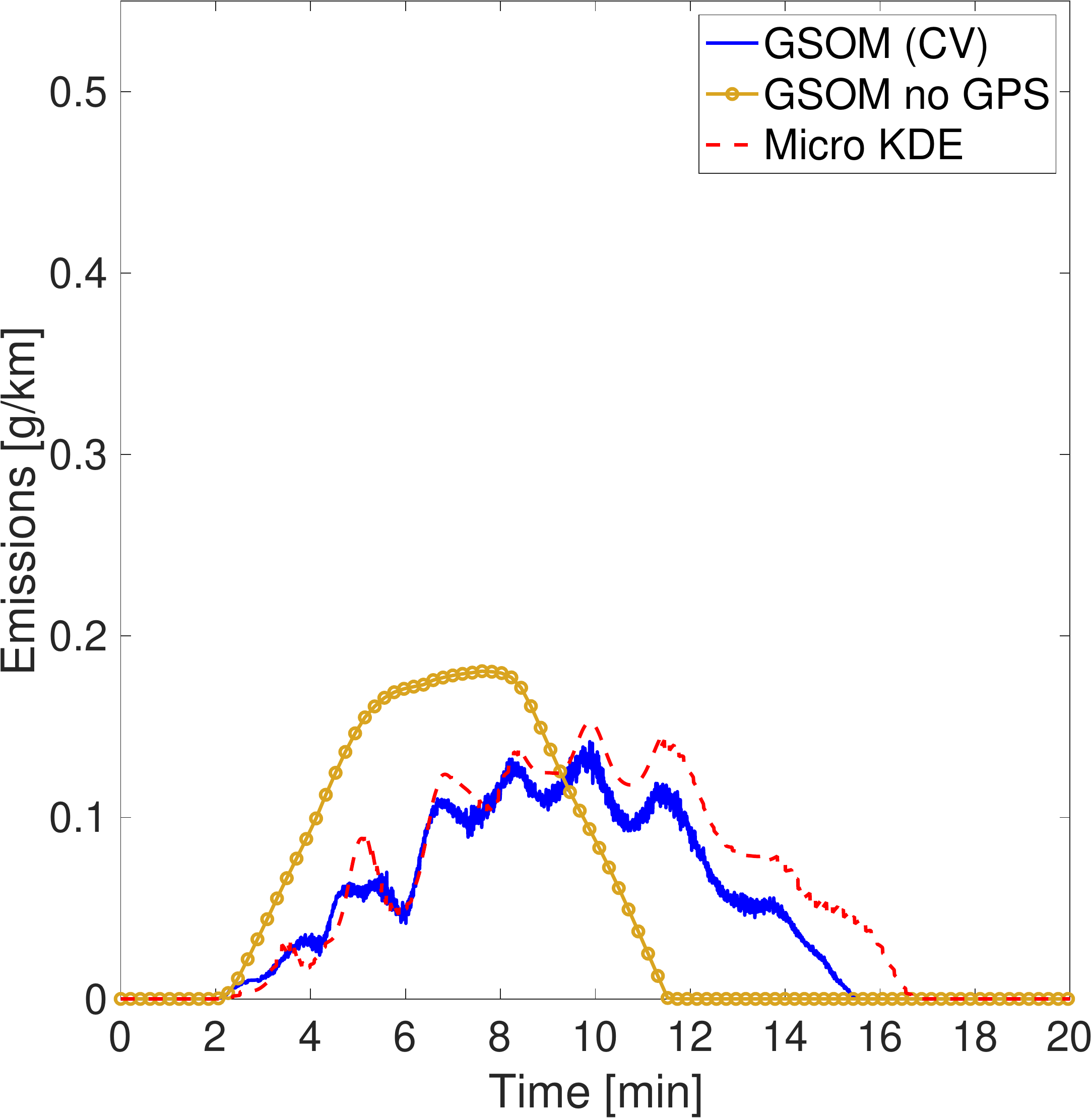}
}\,
\subfloat[][\emph{E-max-f} $\nn = 20$]{
\includegraphics[width=0.3\columnwidth]{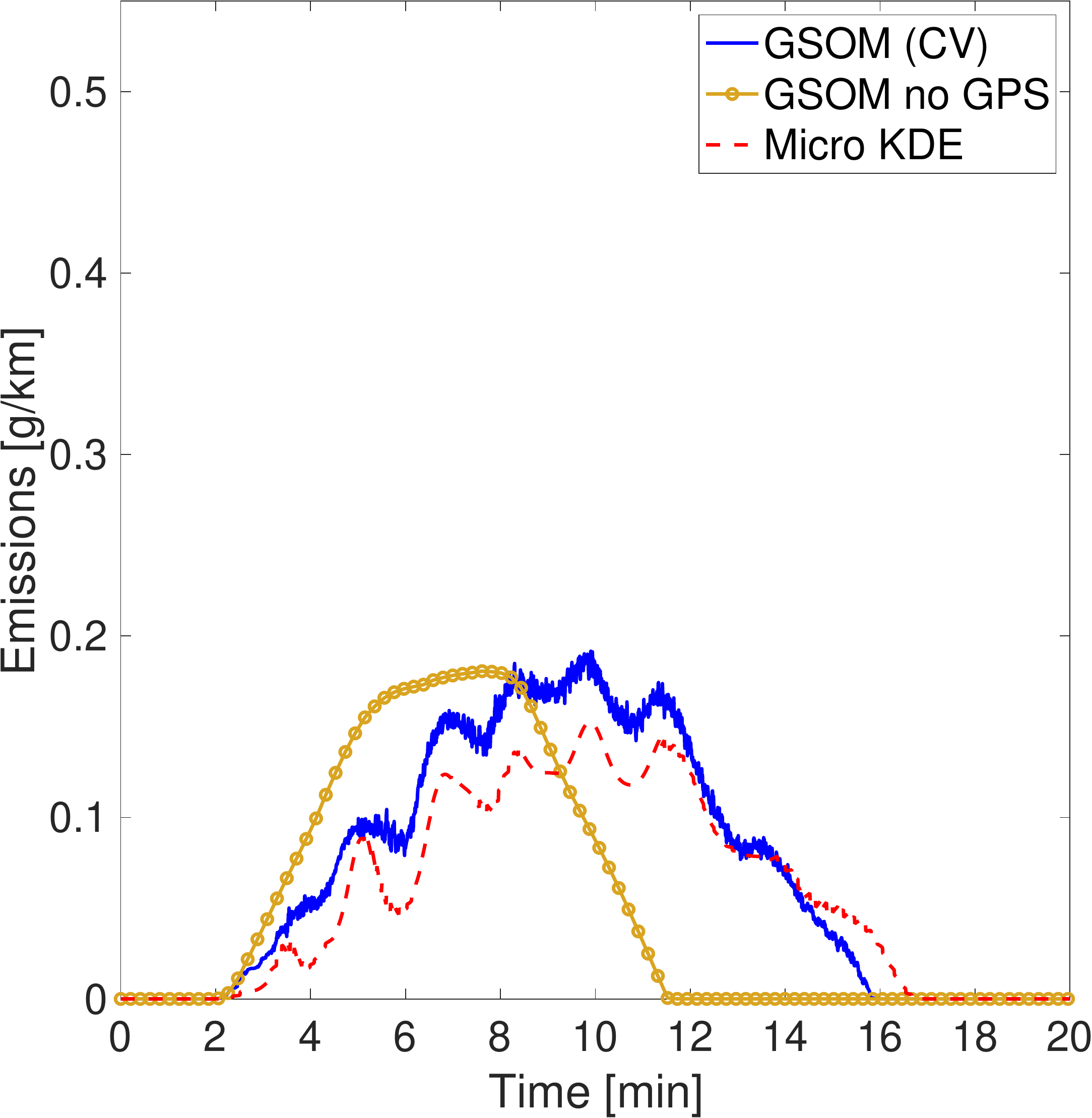}
}\,
\subfloat[][\emph{E-max-f} $\nn = 10$]{
\includegraphics[width=0.3\columnwidth]{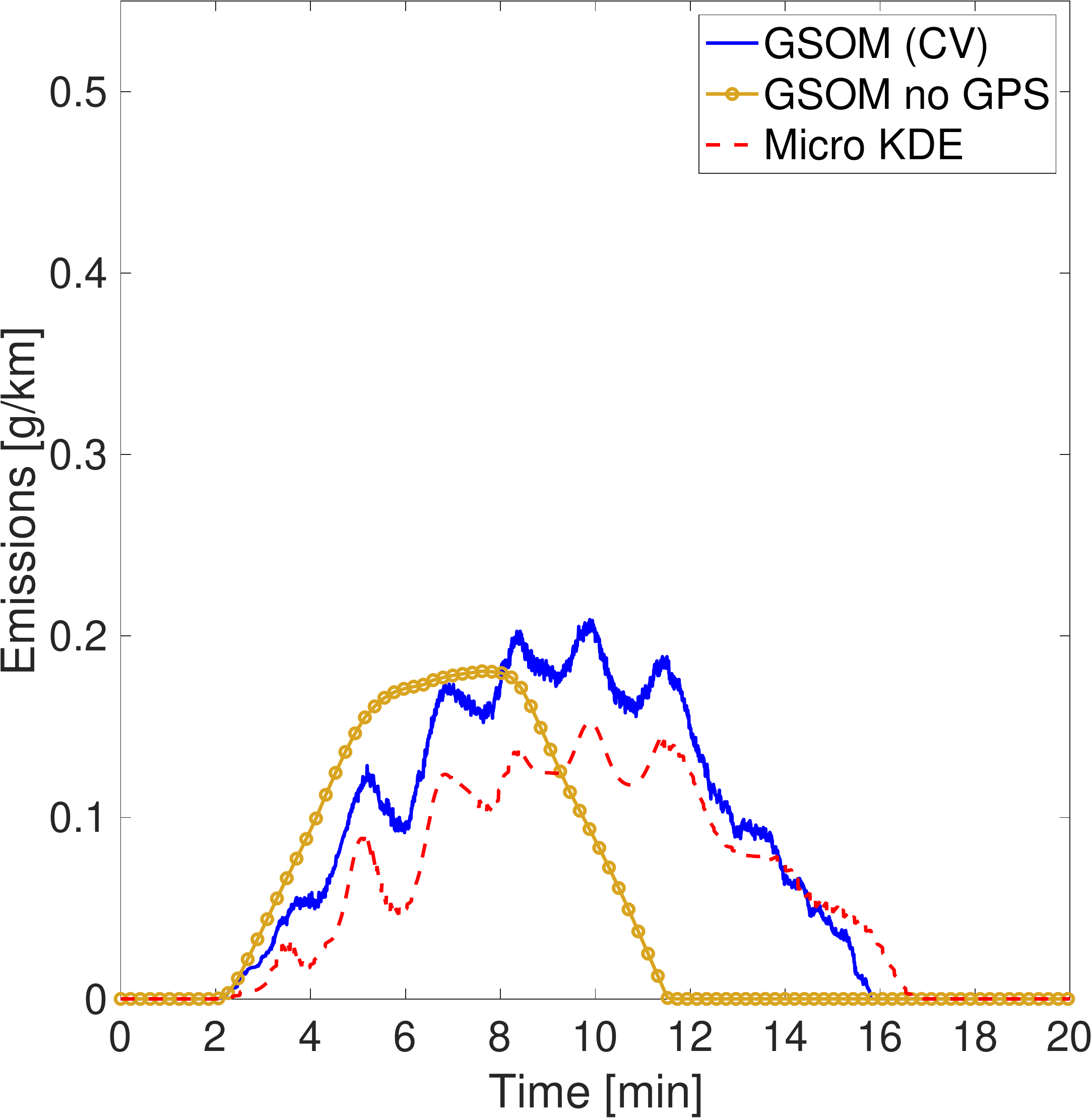}
}\\
\subfloat[][\emph{E-exp-f} $\nn = 41$]{
\includegraphics[width=0.3\columnwidth]{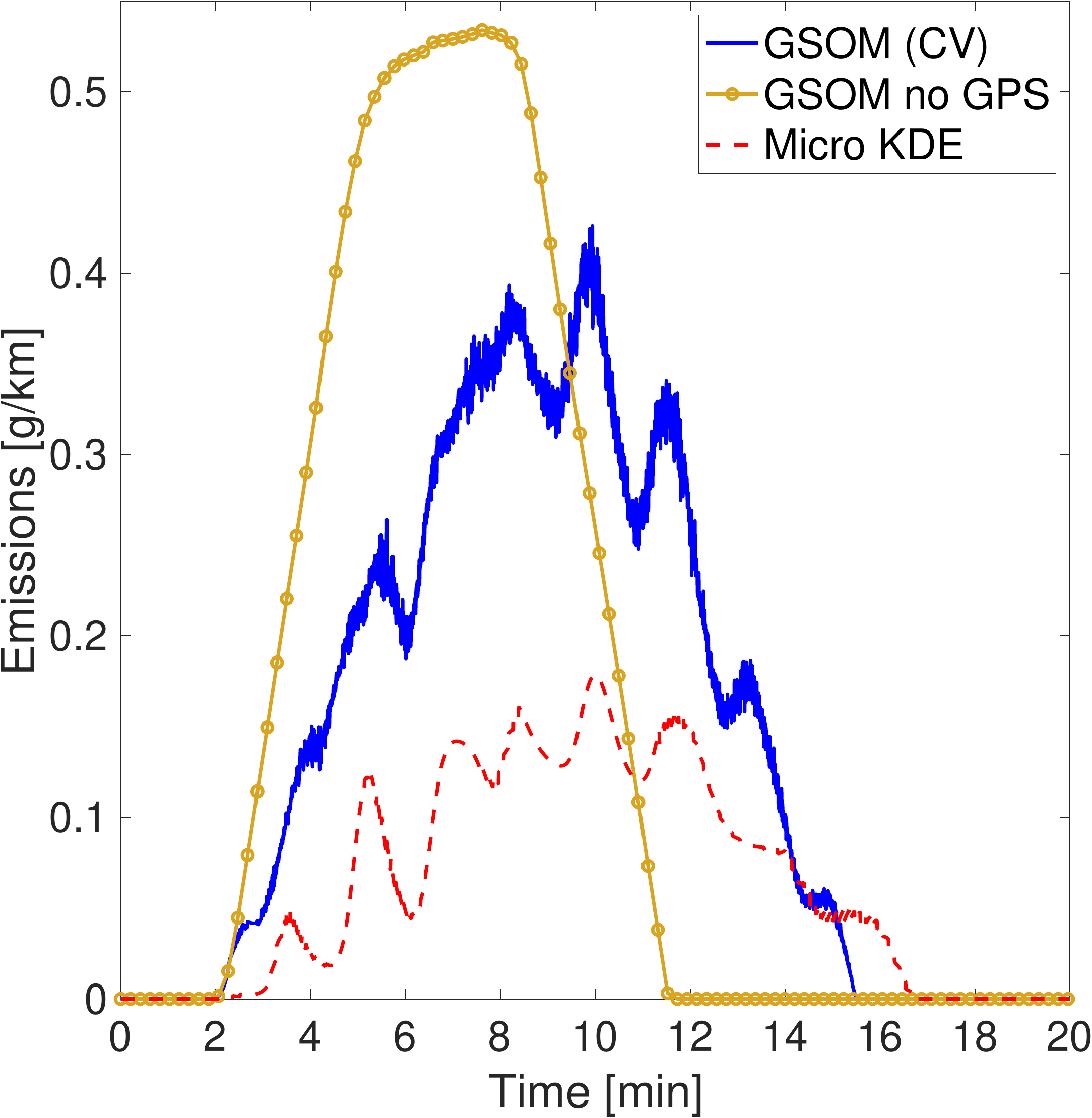}
}\,
\subfloat[][\emph{E-exp-f} $\nn = 20$]{
\includegraphics[width=0.3\columnwidth]{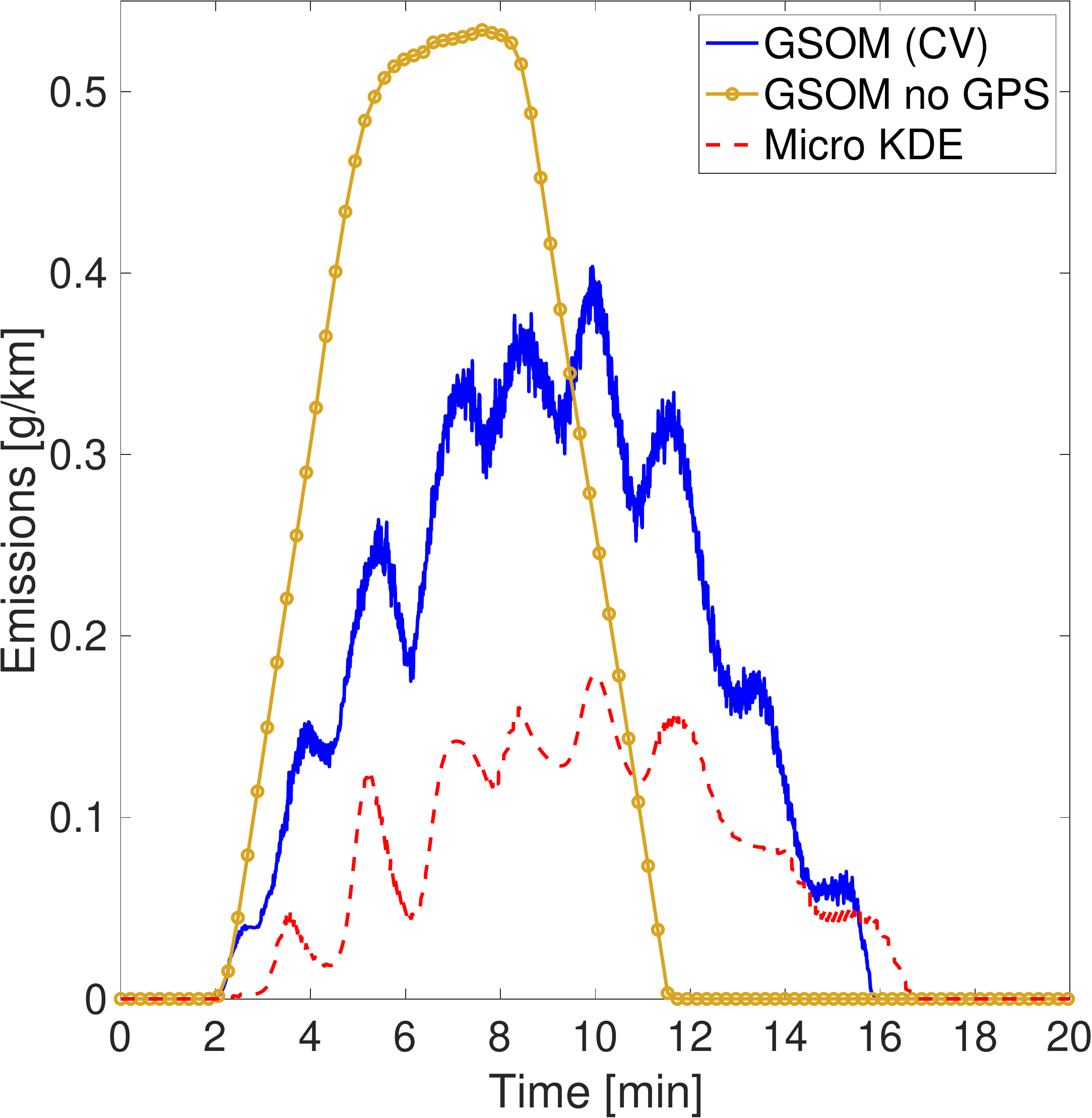}
}\,
\subfloat[][\emph{E-exp-f} $\nn = 10$]{
\includegraphics[width=0.3\columnwidth]{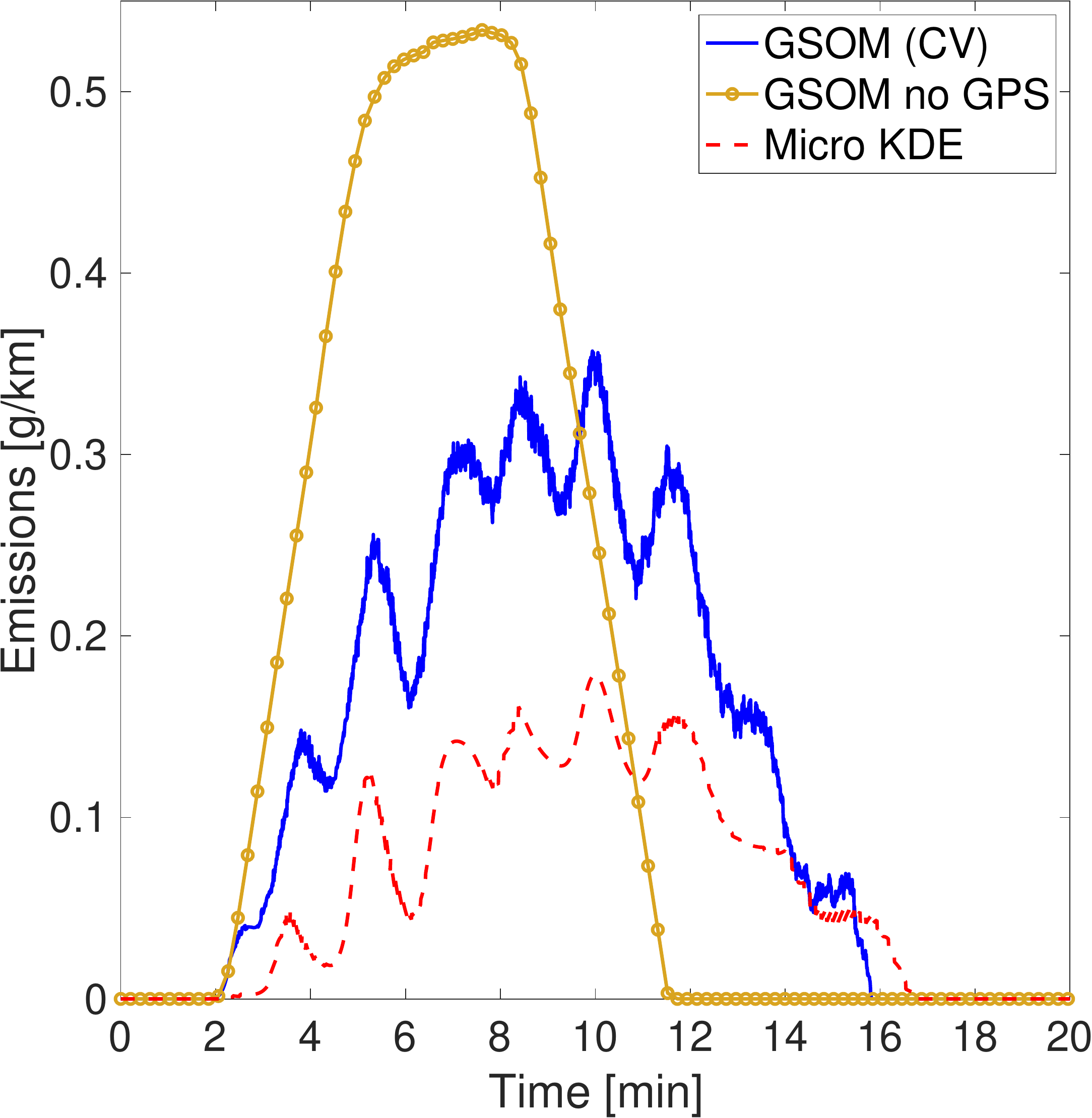}
}
\caption{Section \ref{sec:emissioni} tests. Density profile at $t=10\,\mymin$ (top)  
and total amount of emissions associated with traffic dynamics using the \emph{E-max-formula} \eqref{eq:emissioni} (center) and the \emph{E-exp-formula} \eqref{eq:emissioni2} (bottom) for different $\nn$.}
\label{fig:testEmissioni}
\end{figure}

Finally, in Table \ref{tab:confrontoEm} we estimate the absolute error  in $L^{1}$ norm at the final time of simulation, $\norm{E^{\nt}_{\tot}-e^{\nt}_{\tot}}_{1}$ and $\norm{\widehat E^{\nt}_{\tot}-\widehat e^{\nt}_{\tot}}_{1}$, with and without real trajectory data for $\nn=N,N/2,N/4$. Note that the error associated with the model without real data is not influenced by $\nn$, thus we report a unique value in the third and last column of the table.
For the \emph{E-max-formula}, the error slightly grows with the decrease of $\nn$ and we gain an order of magnitude using the model with real data. For the \emph{E-exp-formula} the error seems not to be affected by the number of tracked vehicles and is higher than the one obtained with the first emission formula. 

\begin{table}[h!]
\centering
\small
\begin{tabular}{ccccc}\toprule
\multirow{2}{*}{$\nn$} & \multicolumn{2}{c}{$\norm{E^{\nt}_{\tot}-e^{\nt}_{\tot}}_{1}$} & \multicolumn{2}{c}{$\norm{\widehat E^{\nt}_{\tot}-\widehat e^{\nt}_{\tot}}_{1}$} \smallskip\\
& With GPS & Without GPS & With GPS & Without GPS\\\midrule
41 & 4e-03 & - & 3e-02 & -\\
20 & 6e-03 & 2e-02& 3e-02 & 5e-02\\
10 & 8e-03 & - & 3e-02 & -\\\bottomrule
\end{tabular}
\caption{Absolute error in $L^{1}$ norm between the two proposed macroscopic and microscopic emission models with and without real trajectory data for different $\nn$.}
\label{tab:confrontoEm}
\end{table}

To sum up, we can give a good approximation of emissions even when few real trajectory data are available. Moreover, the \emph{E-max-formula} \eqref{eq:emissioni} allows for a better approximation of macroscopic emissions than the \emph{E-exp-formula} \eqref{eq:emissioni2} compared to the ground-truth emissions obtained using \eqref{eq:emissioniMicro} and \eqref{eq:emissioniMicro2}. Therefore, in the rest of the paper we will use only the \emph{E-max-formula}.

\begin{remark}
The results of the tests described above are not in contrast with those proposed in \cite[Section 3.1]{balzotti2021DCDS}, where the CGARZ model without GPS data is used with the NGSIM dataset \cite{TrafficNGSIM}. Indeed, the latter contains data for more than 5000 vehicles in 500 meters of road during 45 minutes of data recording. Hence, the large amount of real data allows quite accurate approximations of the emissions just using the CGARZ model with initial data and boundary conditions recovered from the real trajectories of vehicles.
\end{remark}

\subsection{A real-life application with fixed sensors and GPS data}\label{sec:autovieData}
To conclude, we propose an application of traffic model \eqref{eq:modello2} using real trajectory and fixed sensors data provided by Autovie Venete S.p.A. on the Italian A4 (Trieste-Venice) highway. 
The latter is a two lane motorway network travelled by light and heavy vehicles (cars and trucks). In Figure \ref{fig:rete} we draw a sketch of the network with three diverge and three merge junctions connecting six roads. The triangles represent the fixed sensors which record the flux of vehicles that enter the network.
Since we have access to more real truck trajectories than car ones, here we focus only on the dynamic of heavy vehicles and the goal is to apply the methodology described above to estimate the $\nox$ emission rates in a real-life scenario. The model can be used in the same way with real trajectory data of cars or other vehicle classes.
As already discussed at the end of Section \ref{sec:emissioni}, we use the \emph{E-max-formula} \eqref{eq:emissioni} for a more precise approximation of macroscopic emission rates. We use the coefficients $f_{1}-f_{6}$ in \eqref{eq:emissioni} calibrated for trucks, see \cite[Table 2]{panis2006elsevier}.

\begin{figure}[h!]
\centering
\begin{tikzpicture}[scale=0.75]
\draw[thick] (-1,0) -- node[below, xshift=0.3cm]{1} (3,0) node[currarrow,pos=0.6,xscale=0.7, sloped] {}; 
\draw[thick] (3,0) -- node[below, xshift=0.7cm]{2} (7,0) node[currarrow,pos=0.7,xscale=0.7, sloped] {};  
\draw[thick] (-1,1) -- node[above, xshift=-0.2cm]{4} (3,1) node[currarrow,pos=0.4,xscale=-0.7, sloped] {}; 
\draw[thick] (3,1) -- node[above, xshift=0.6cm]{3} (7,1) node[currarrow,pos=0.6,xscale=-0.7, sloped] {};  
\draw[thick] (2,5) -- node[left, yshift=1.1cm]{5} (2,1) node[currarrow,pos=0.3,xscale=0.7, sloped] {}; 
\draw[thick] (3,0) -- node[right, yshift=1.1cm]{6} (3,5) node[currarrow,pos=0.75,xscale=0.7, sloped] {};  
\draw[thick] (2,2) -- (4,0) node[currarrow,pos=0.2,xscale=0.7, sloped] {}; 
\draw[thick] (2,2) -- (2,1) node[currarrow,pos=0.5,xscale=0.7, sloped] {}; 
\filldraw[Red] (2,2) circle (1.5pt) node[left]{D3};

\draw[thick] (4,1) -- (3,2) node[currarrow,pos=0.5,xscale=-0.7, sloped] {}; 
\draw[thick] (4,1) -- (3,1) node[currarrow,pos=0.5,xscale=-0.7, sloped] {}; 
\filldraw[Red] (4,1) circle (1.5pt) node[below]{D2};

\draw[thick] (3,0) -- (3,1) node[currarrow,pos=0.5,xscale=0.7, sloped] {}; 
\draw[thick] (3,0) -- (4,0)node[currarrow,pos=0.5,xscale=0.7, sloped] {}; 
\filldraw[Red] (3,0) circle (1.5pt) node[below]{D1};

\filldraw[ForestGreen] (2,1) circle (1.5pt) node[below]{M2};
\filldraw[ForestGreen] (3,2) circle (1.5pt) node[right]{M3};
\filldraw[ForestGreen] (4,0) circle (1.5pt) node[below]{M1};

\node[isosceles triangle,isosceles triangle apex angle=60,draw,fill=orange!90,minimum size = 0.05cm, rotate=90] at (-1.2,-0.1){};
\node[isosceles triangle,isosceles triangle apex angle=60,draw,fill=orange!90,minimum size = 0.05cm, rotate=90] at (7.2,0.9){};
\node[isosceles triangle,isosceles triangle apex angle=60,draw,fill=orange!90,minimum size = 0.05cm, rotate=90] at (2,5.2){};

\end{tikzpicture}
\caption{Section \ref{sec:autovieData} test. Sketch of the highway network, where the roads are numbered from 1 to 6, the triangles represent the fixed sensors, the diverge junctions are represented by points D1, D2, D3 and the merge ones by points M1, M2, M3.}
\label{fig:rete}
\end{figure}
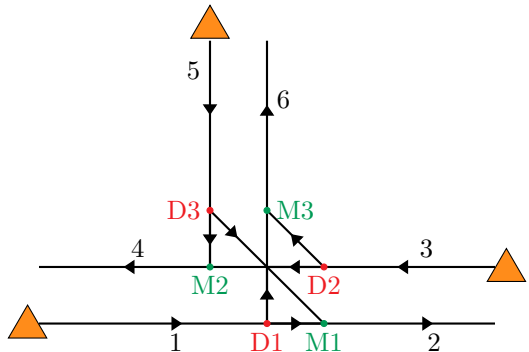

First of all we describe the main features of the network. We have three incoming roads (1, 3 and 5) and three outgoing roads (2, 4 and 6). The dynamic of vehicles along each road is described by the model \eqref{eq:modello2}. We use the notation $\rho_{r,j}^{n}$ to identify the density on road $r$ in cell $x_{j}$ at time $t^{n}$; similarly for the variable $w$.
A special treatment is required at junctions, which are divided into two types: the diverge junctions, that divide a road in two, and the merge ones, which join two roads into one. Diverge junctions are ruled by a distribution coefficient that specifies the percentage of vehicles which prefer one road rather than the other one. For instance, junction $\mathrm{D1}$ has a certain distribution parameter $\alpha_{\mathrm{D1}}$ defining the percentage of vehicles which continue their path from road 1 to road 2, while $1-\alpha_{\mathrm{D1}}$ represents vehicles which continue on road 6. 
Similarly for the other diverge junctions. Following \cite{balzotti2022AX}, merge junctions are characterized by a priority rule that establishes which of the two roads sends more flow of vehicles. We denote by $\beta_{\mathrm{M1}}$, $\beta_{\mathrm{M2}}$ and $\beta_{\mathrm{M3}}$ these parameters in $[0,1]$, and we calibrate them as a property of the network.

Now we describe the treatment of real data and we begin with data from GPS devices. In Figure \ref{fig:dati} we show an example of real trajectory data related to heavy vehicles on the six roads of the network. This data provides information on the position and velocity of vehicles at various times. The time interval of the recorded positions is not constant and can vary from a few seconds to many minutes. In order to integrate the trajectory data into the numerical scheme, once a vehicle is located into a monitored road, we need to know its position with respect to the time step $\dt$. To this end, we linearly interpolate the given data to reconstruct missing information when necessary. 
Furthermore, this data is used to estimate the distribution parameters $\alpha_{\mathrm{D1}}$, $\alpha_{\mathrm{D2}}$ and $\alpha_{\mathrm{D3}}$ of diverge junctions. Indeed, we use the trajectory data to reconstruct the main paths along the network to compute the percentage of vehicles that at the junction continue towards one road rather than the other one. The analysis of the paths of heavy vehicles during one month of data allowed us to estimate $\alpha_{\mathrm{D1}} = 0.78$, $\alpha_{\mathrm{D2}} = 0.78$ and $\alpha_{\mathrm{D3}} = 0.48$.

\begin{figure}[h!]
\centering
\subfloat[][Road 1]{
\includegraphics[width=0.3\columnwidth]{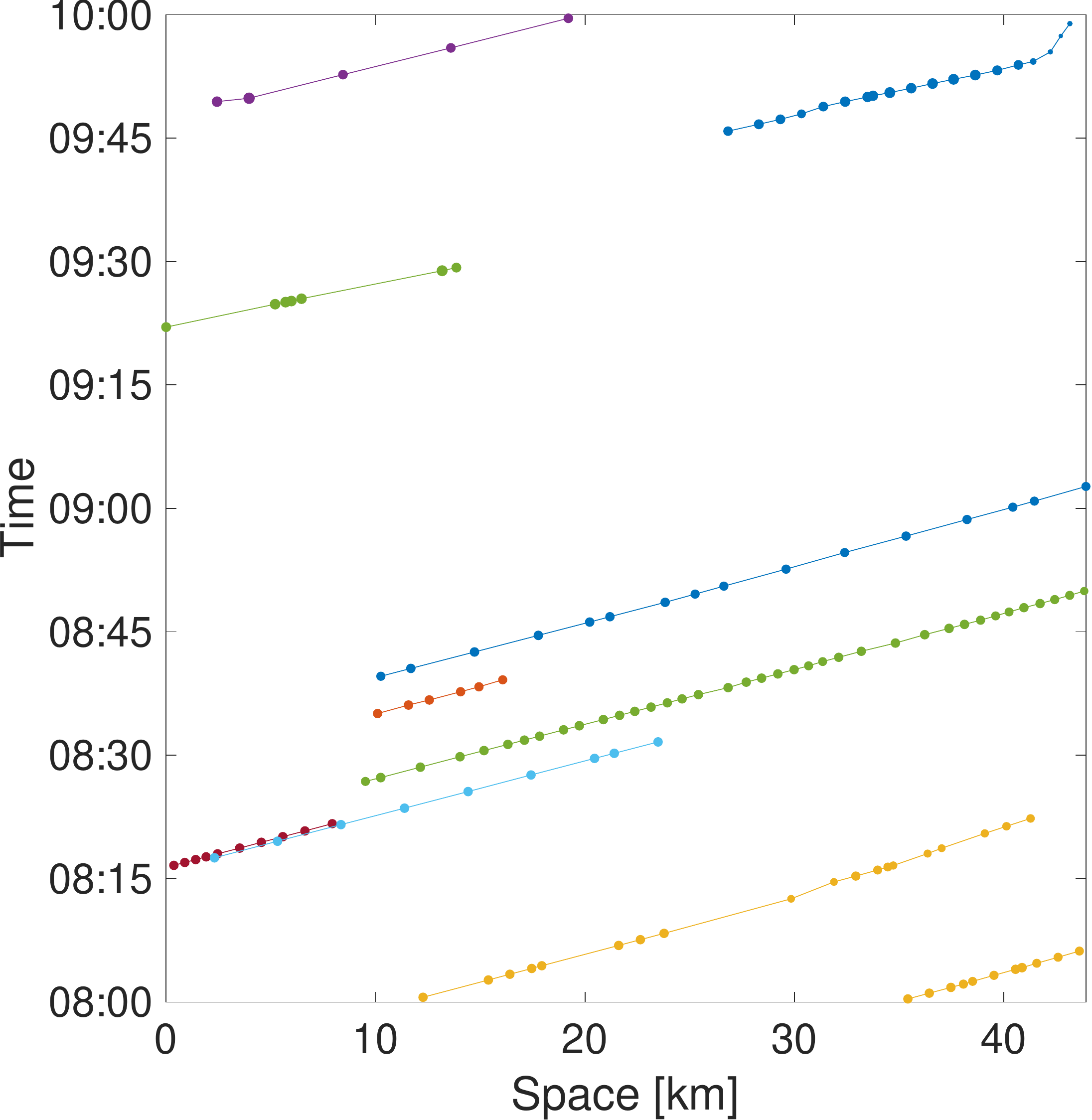}
}
\subfloat[][Road 2]{
\includegraphics[width=0.3\columnwidth]{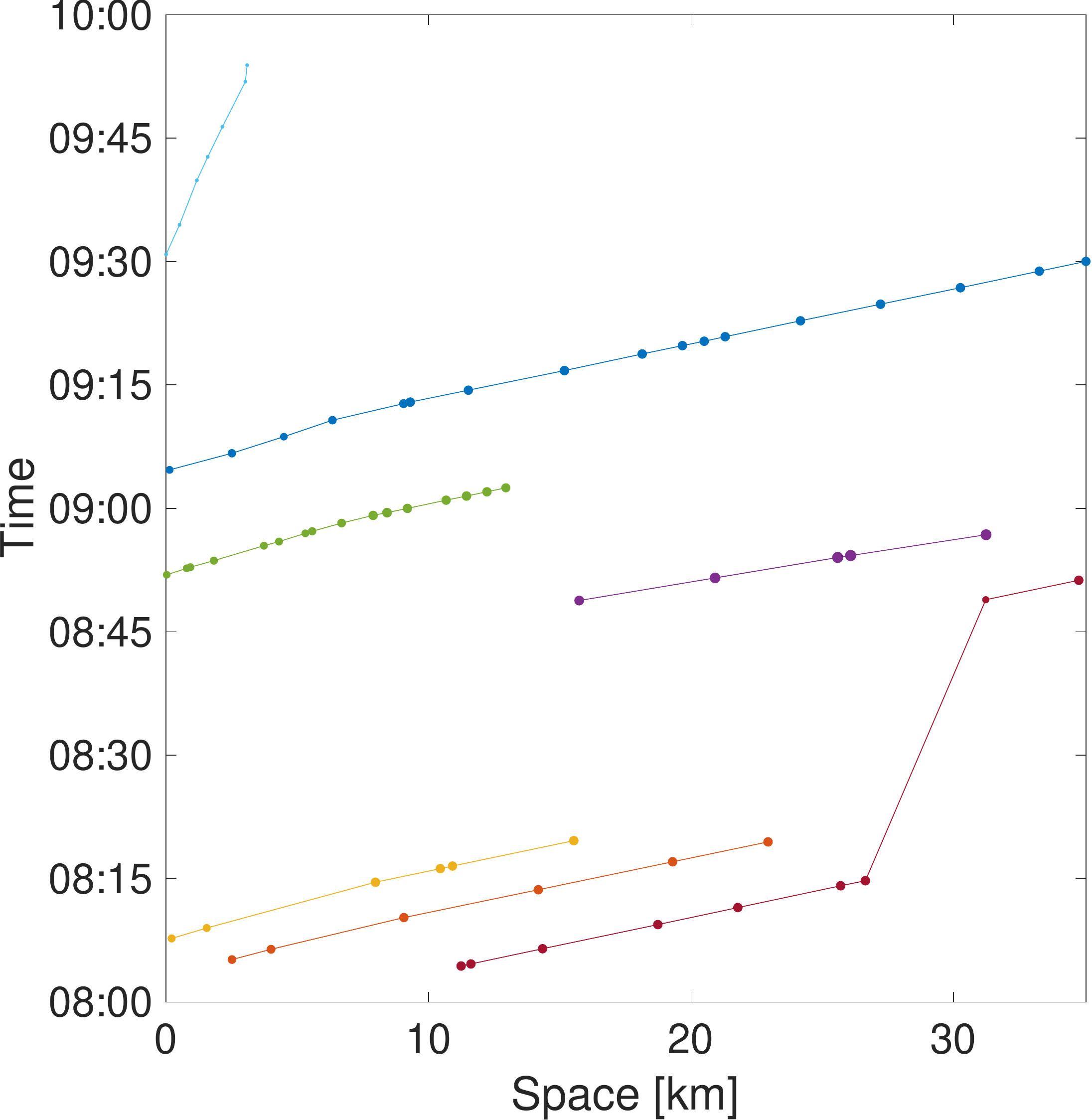}
}
\subfloat[][Road 3]{
\includegraphics[width=0.3\columnwidth]{grafici/camion_27082021_tratta5441.pdf}
}\\
\subfloat[][Road 4]{
\includegraphics[width=0.3\columnwidth]{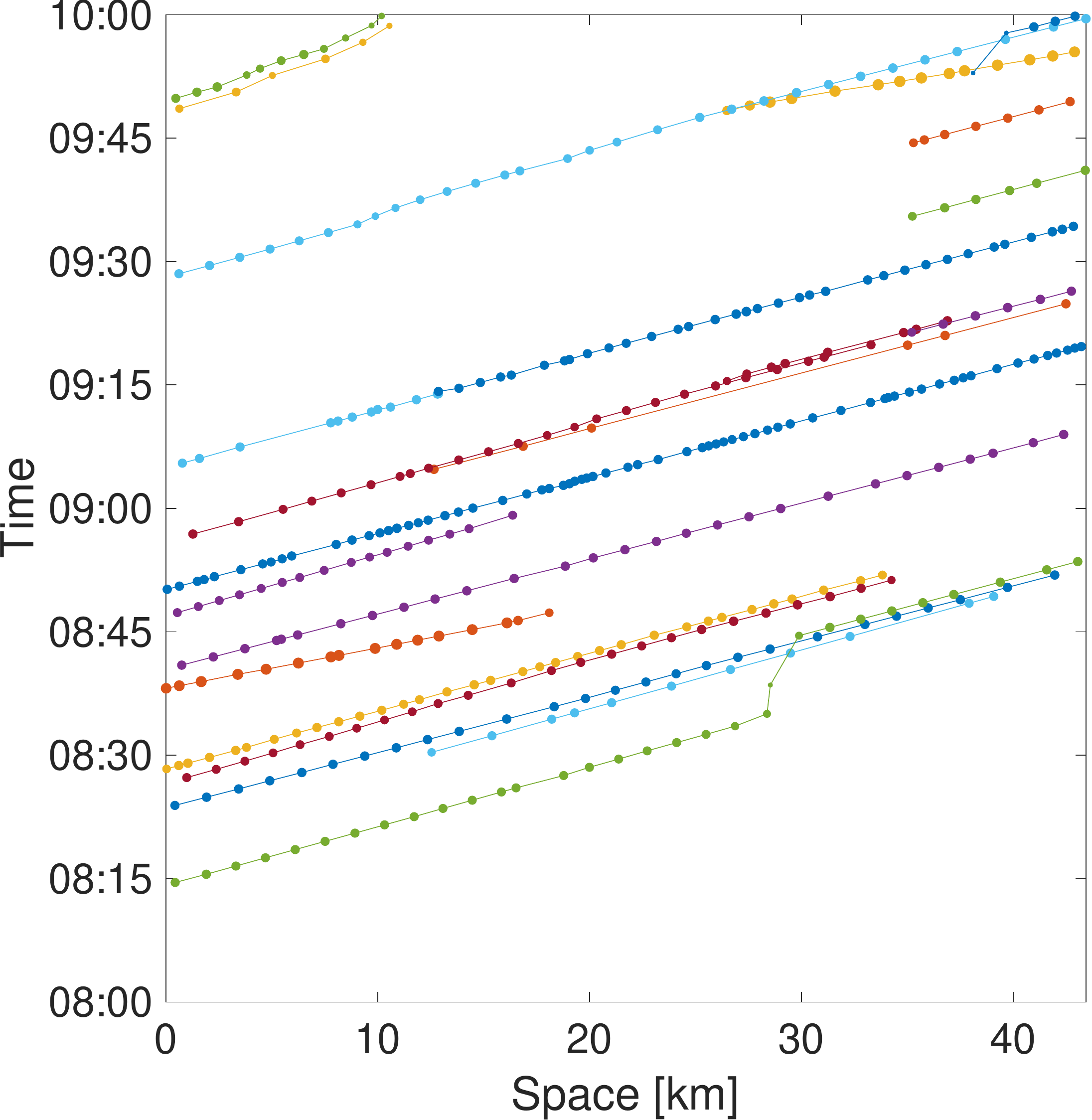}
}
\subfloat[][Road 5]{
\includegraphics[width=0.3\columnwidth]{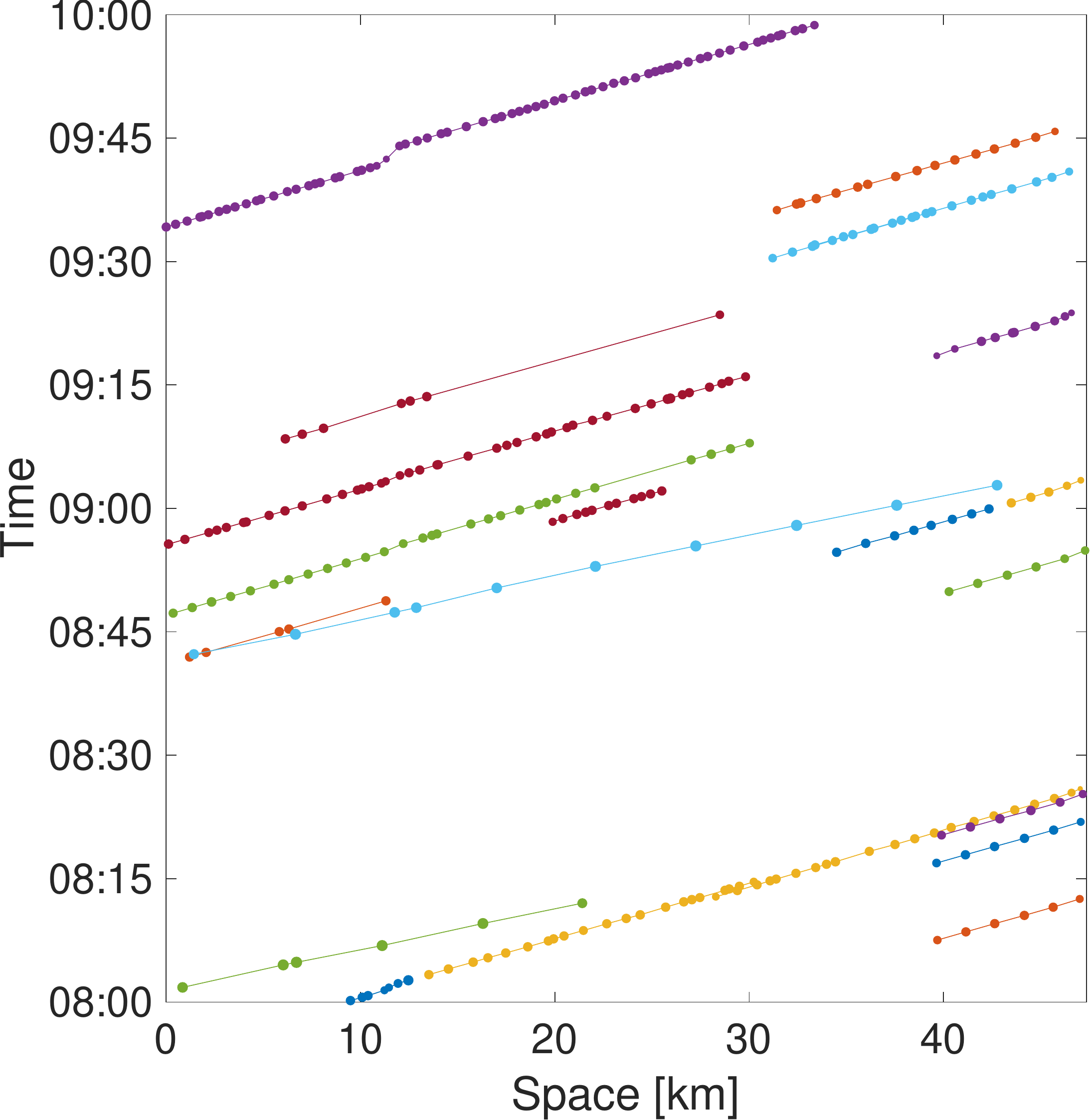}
}
\subfloat[][Road 6]{
\includegraphics[width=0.3\columnwidth]{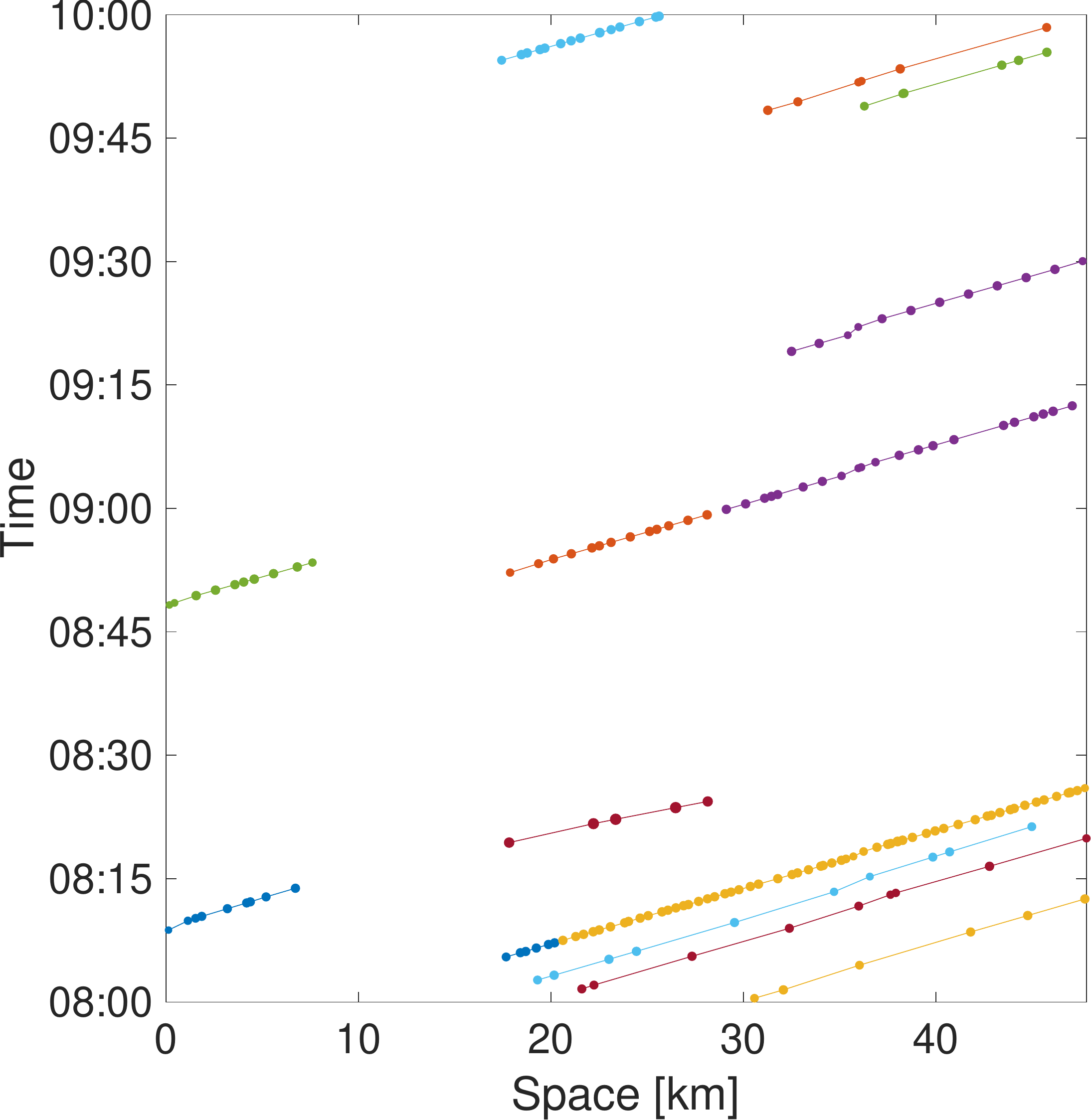}
}
\caption{Section \ref{sec:autovieData} test. Example of real trajectory data recorded on 27/08/2021. The size of the space-time circles is proportional to vehicles velocity. The data were provided by Autovie Venete S.p.A and are not publicly available.}
\label{fig:dati}
\end{figure}

In addition to GPS data, there are fixed sensors along the highway network that record the flow and speed of vehicles crossing it every minute. We denote the sensor data on road $r$ by $\qq_{r,\sens}$ and $\vv_{r,\sens}$. As mentioned ad the end of Section \ref{sec:modello2}, we use sensors data for the boundary conditions on the incoming side of the roads entering the network, i.e.\ roads 1, 3 and 5. Indeed, we modify the incoming numerical flux of scheme \eqref{eq:schema2} in the first cell of these roads. Specifically, since the sensor data is given every minute, we interpolate it in a piecewise constant way, thus we define $\qq^{n}_{r,\sens} = Q_{r,\sens}(\hat t)$, where $\hat t=\lfloor 60n\dt\rfloor$ is the minute corresponding to $t^{n}$. Then we compute the density $\rho^{n}_{r,0}$ by applying \eqref{eq:schema2} with $Q^{n}_{r,-1/2} = \qq^{n}_{r,\sens}$. Once computed the density, we estimate $w^{n}_{r,0}$ such that $v(\rho^{n}_{r,0},w^{n}_{r,0})=\vv^{n}_{r,\sens}$, with $V^{n}_{r,\sens} = \vv_{r,\sens}(\lfloor 60 n\dt\rfloor)$ velocity value captured by the sensor. In Figure \ref{fig:sensori} we show the variation in time of the flux of heavy vehicles for the three sensors.

\begin{figure}[h!]
\centering
\subfloat[][Sensor on road 1]{
\includegraphics[width=0.3\columnwidth]{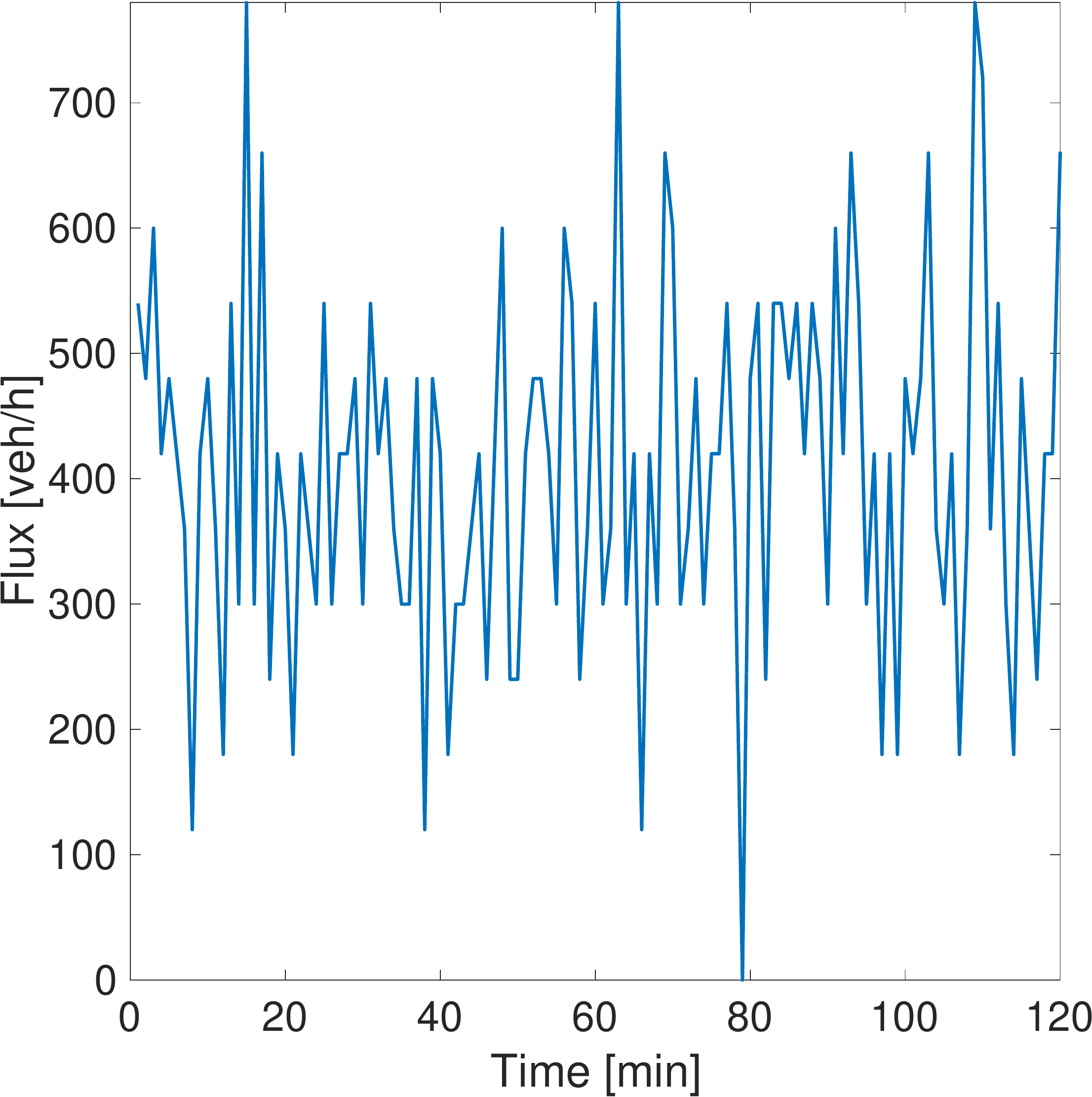}
}
\subfloat[][Sensor on road 3]{
\includegraphics[width=0.3\columnwidth]{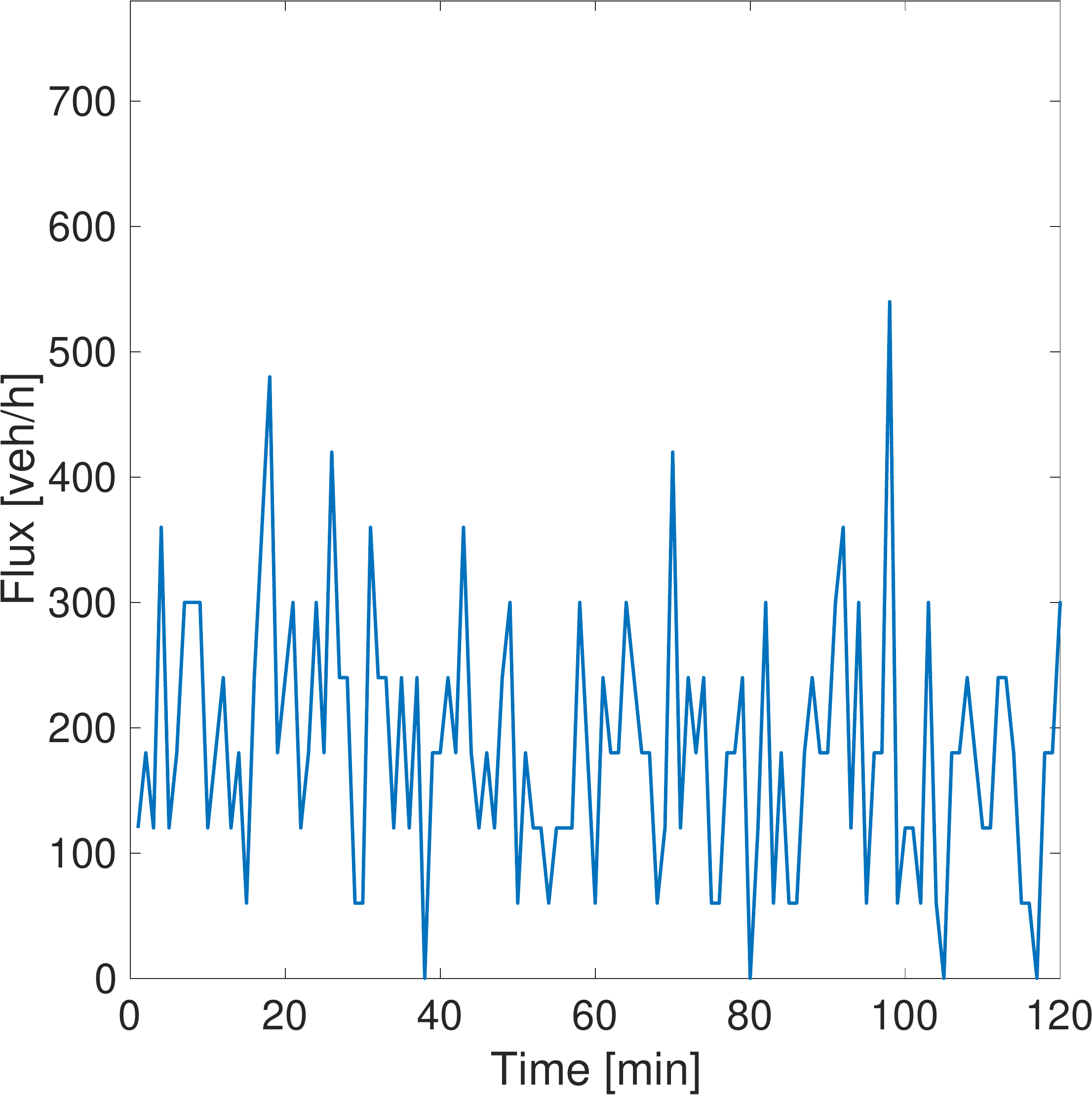}
}
\subfloat[][Sensor on road 5]{
\includegraphics[width=0.3\columnwidth]{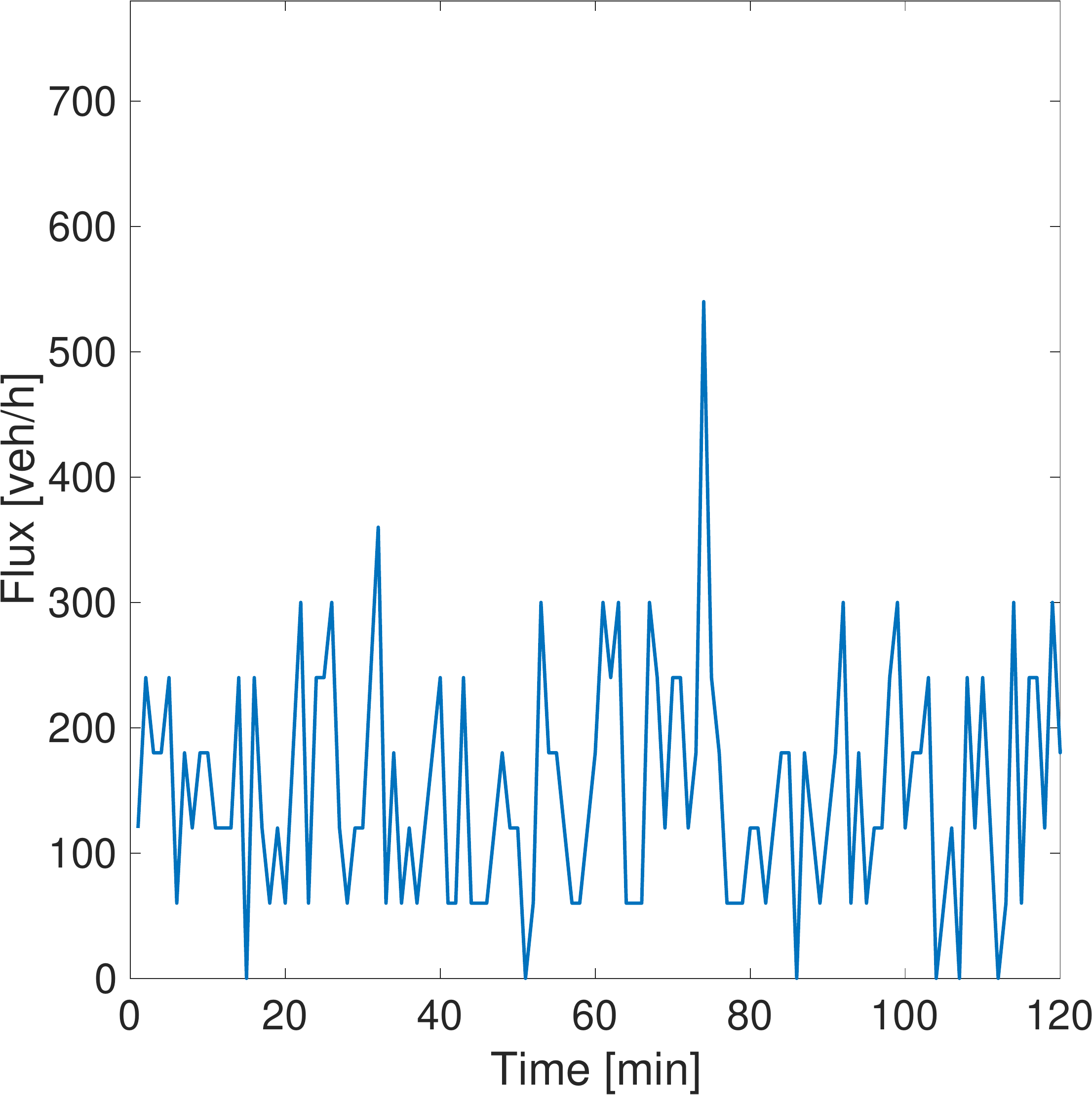}
}
\caption{Section \ref{sec:autovieData} test. Variation in time of the flux per minute of heavy vehicles recorded by the three sensors on 27/08/2021. The data were provided by Autovie Venete S.p.A and are not publicly available.}
\label{fig:sensori}
\end{figure}

\medskip
To simulate the traffic dynamics we proceed as follows: first, we estimate the initial condition starting from an empty network; then, we approximate the dynamics of vehicles by means of model \eqref{eq:modello2}.
To estimate the initial traffic state, we start from an empty network with $w=(\wl+\wr)/2$. As explained above, the traffic state on the incoming boundary of roads entering the network, $(\rho^{n}_{r,0},w^{n}_{r,0},y^{n}_{r,0})$ for $r\in\{1,3,5\}$, is estimated from sensors data. Moreover we assume homogeneous Neumann conditions on the outgoing side of roads 2, 4 and 6, hence vehicles are free to leave the network. The roads are then filled for half an hour through model \eqref{eq:modello2} with both trajectory and sensors data, using the CTM scheme introduced in Section \ref{sec:Ndata}. 
The traffic state after half an hour of simulation is the initial state for model \eqref{eq:modello2}, and we use it to approximate the dynamics and then estimate vehicular emissions via \eqref{eq:emissioni}.

In our test we consider the network depicted in Figure \ref{fig:rete}. We use again the CGARZ model with the flux function defined at the beginning of Section \ref{sec:test2O}. Let us denote by $L_{r}$ the length of road $r$, $r=1,\dots,6$. According to the properties of the highway, we fix $L_{1}=L_{4}=41\,\km$, $L_{2}=L_{3}=L_{4}=L_{5}=36\,\km$, $\beta_{M1}=\beta_{M3}=0.2$, $\beta_{M2}=0.5$, $\vmax=90\,\kmh$ and $\rhomax=56\,\vehkm$. In particular, by assuming that the length of each heavy vehicle is $18\,\meter$ (length + safety distance), the maximum density $\rhomax$ is computed by dividing the total employable space (two lanes) by the space occupied by the vehicles, thus $\rhomax=56\,\vehkm$. Moreover, we set $\rhof=10\,\vehkm$, $\wl=g(\rhof)=739$ and $\wr=g(\rhomax/2) = 1260$, with $g$ defined in \eqref{eq:g}. 

Once estimated the initial state, we consider a simulation with $T=1.5\,\myhour$ influenced by the trajectory data represented in Figure \ref{fig:dati}. In Figure \ref{fig:testAV} we show the density of vehicles along the network at different times, excluding the six junctions and focusing only on the main roads. 
We observe that the inclusion of real trajectory data allows model \eqref{eq:modello2} to capture the formation of small congestions, mainly on the horizontal roads, which could not be observed otherwise.

\begin{figure}[h!]
\centering
\subfloat[][Density $t=0\,\mymin$]{\label{fig:AVdens1}
\includegraphics[width=0.235\columnwidth]{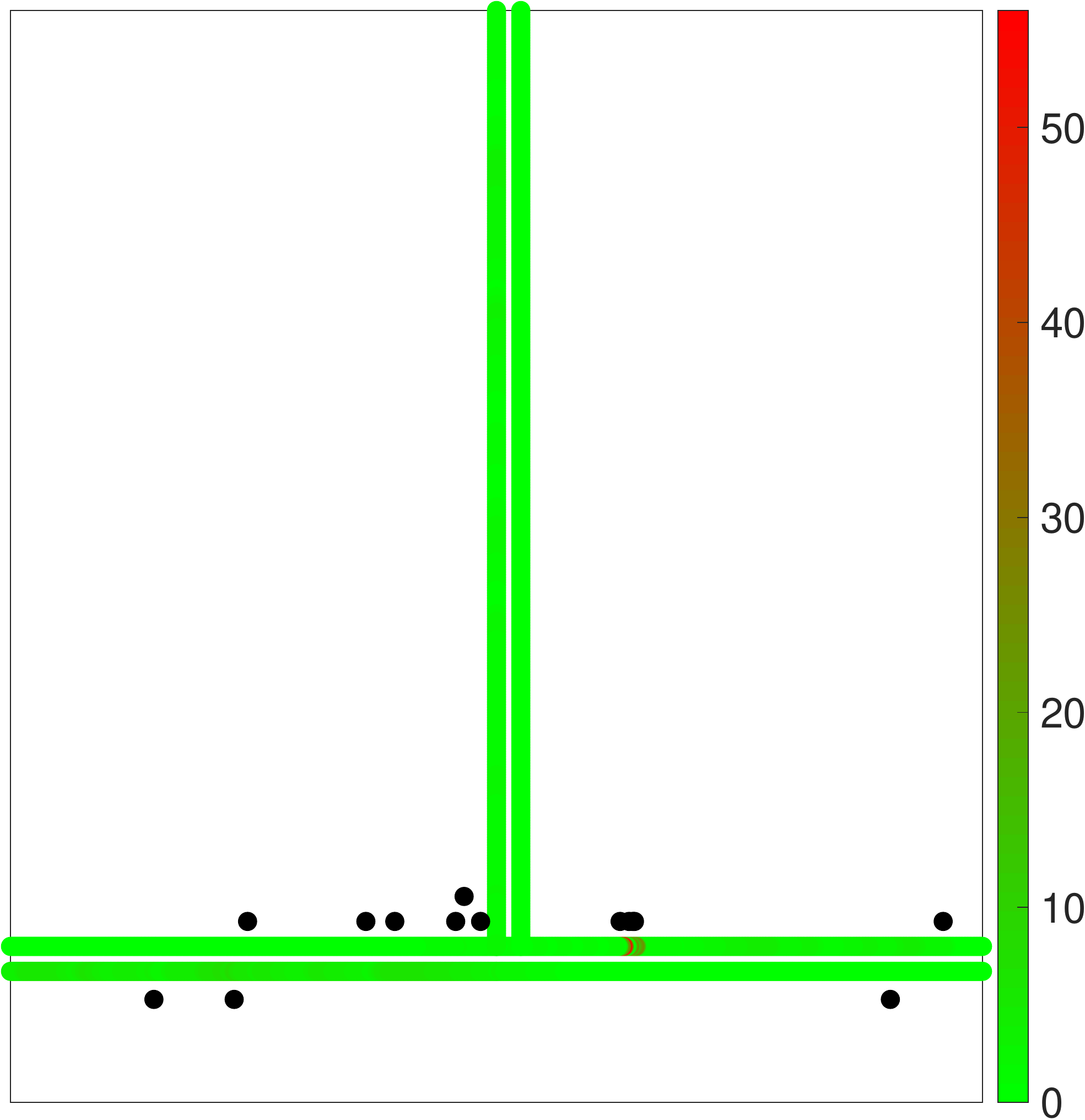}
}
\subfloat[][Density $t=15\,\mymin$]{\label{fig:AVdens2}
\includegraphics[width=0.235\columnwidth]{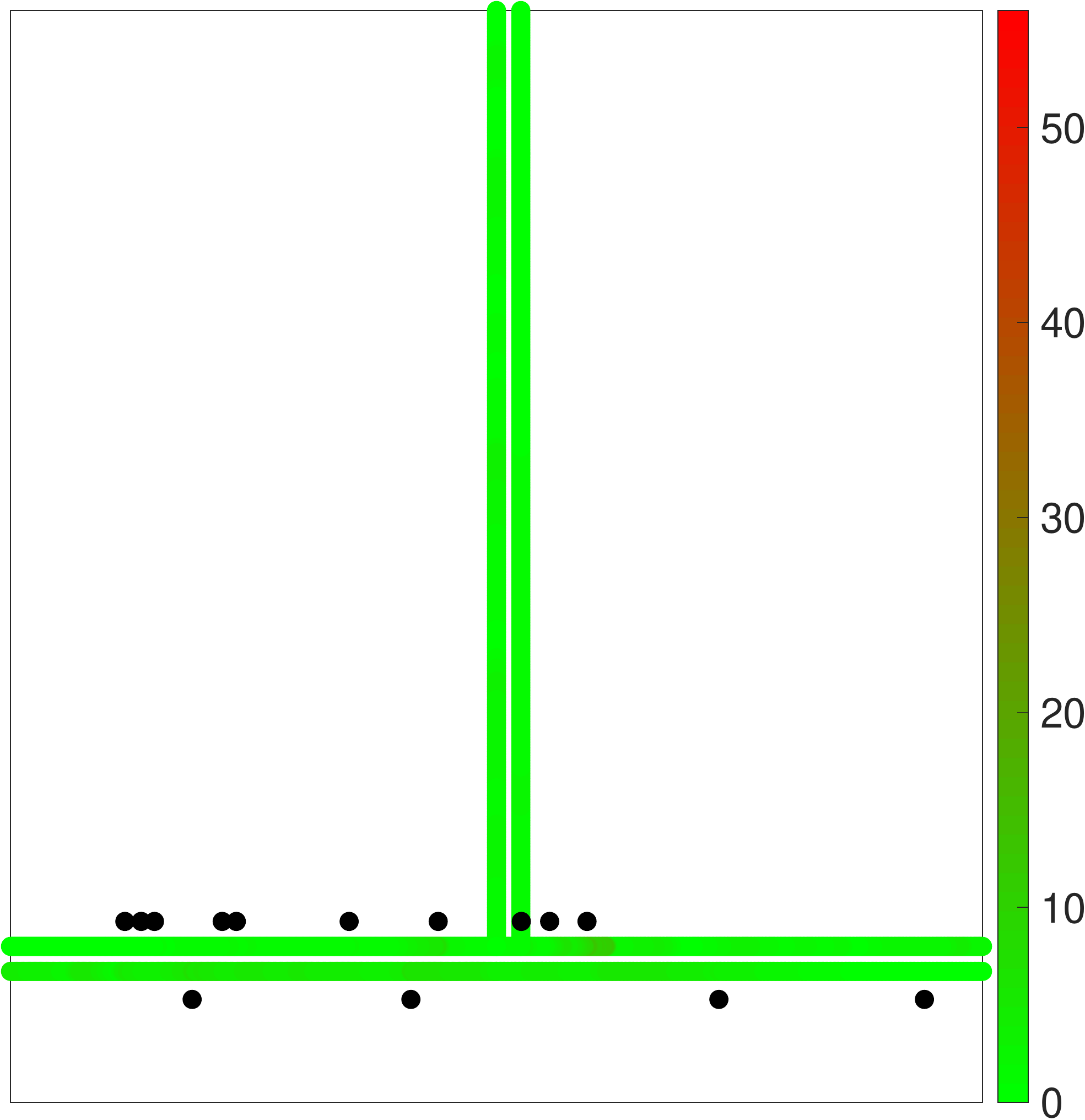}
}
\subfloat[][Density $t=60\,\mymin$]{\label{fig:AVdens3}
\includegraphics[width=0.235\columnwidth]{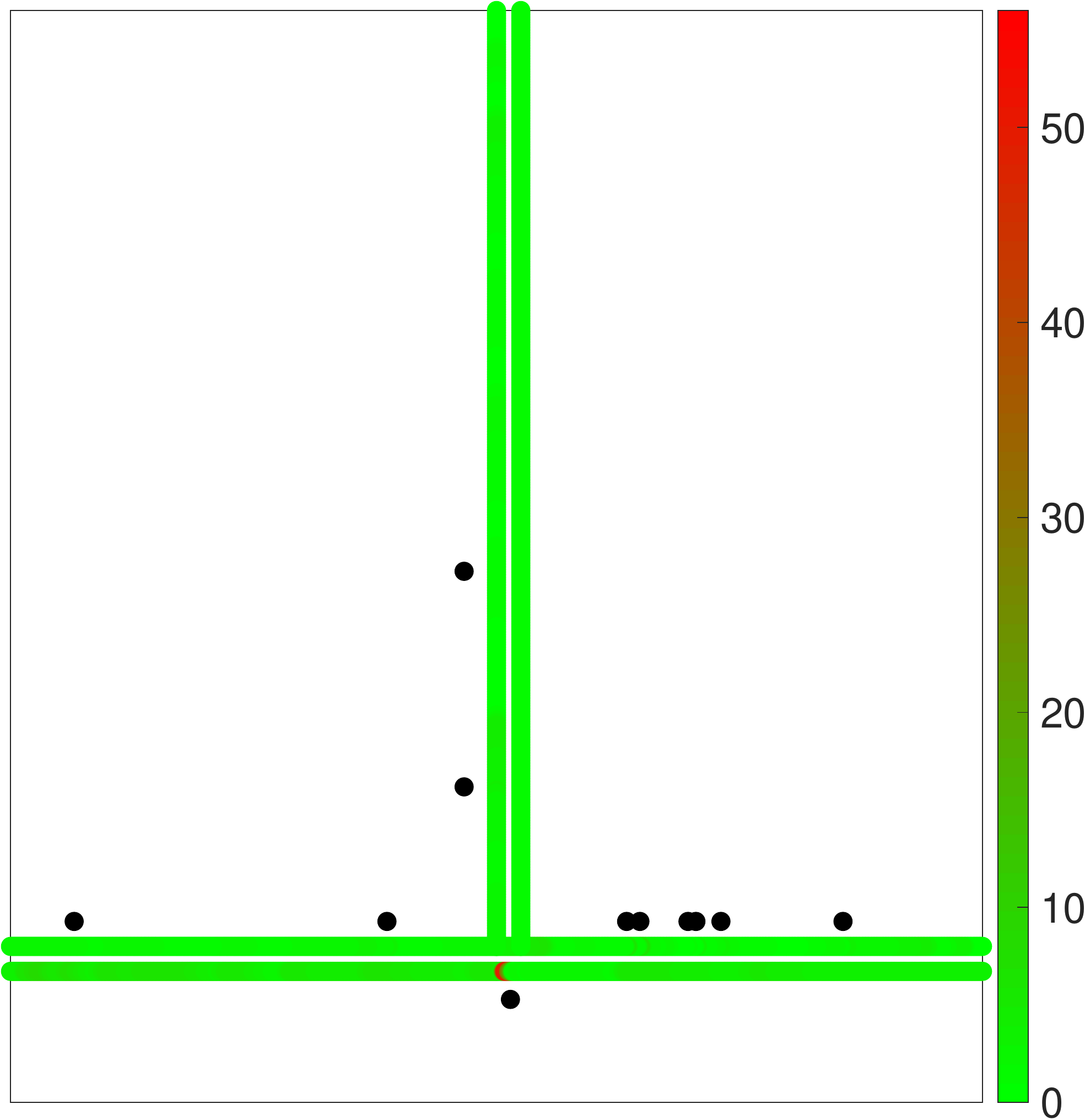}
}
\subfloat[][Density $t=90\,\mymin$]{\label{fig:AVdens4}
\includegraphics[width=0.235\columnwidth]{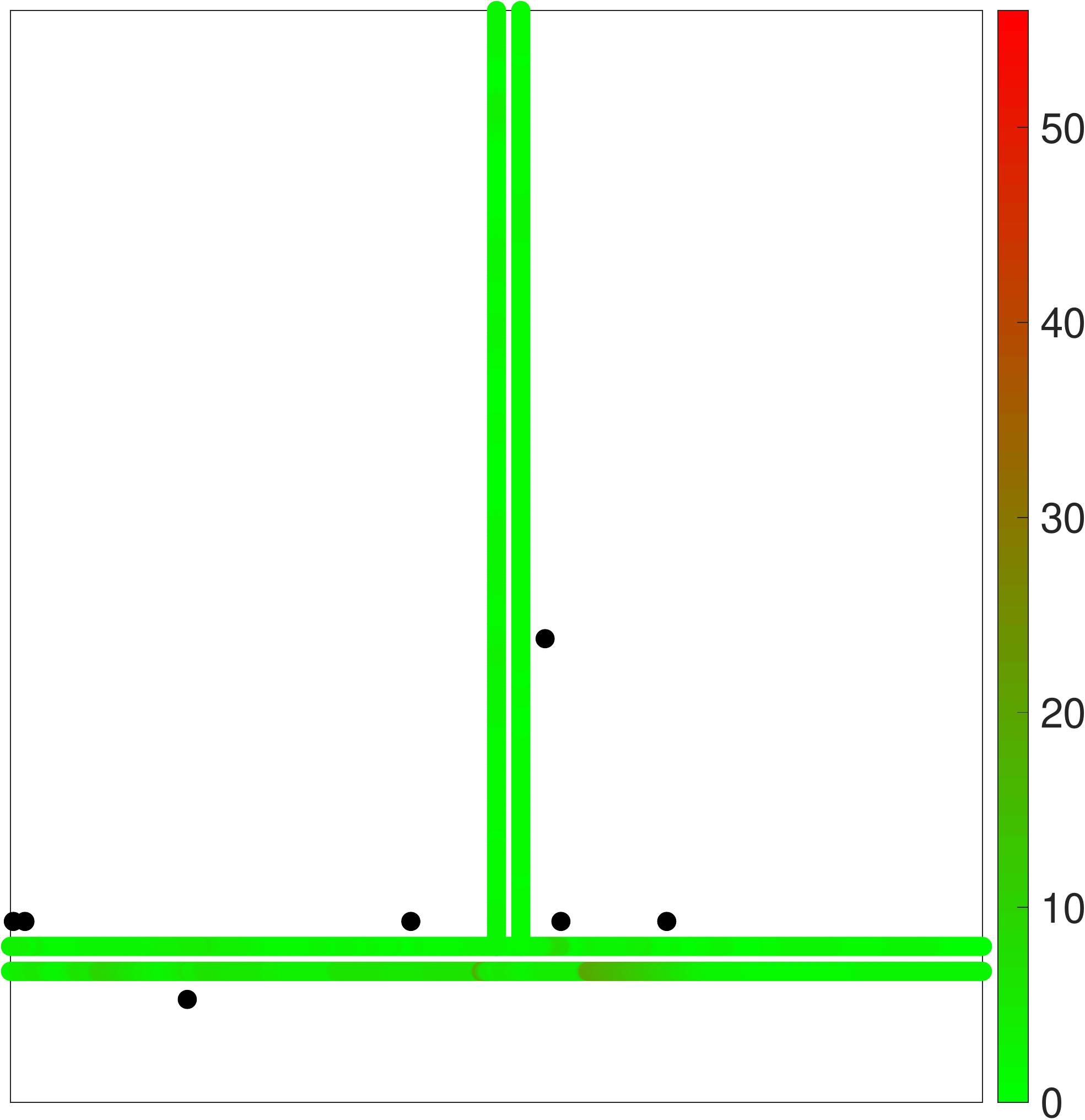}
}
\caption{Section \ref{sec:autovieData} test. Density of vehicles at different times of the simulation.}
\label{fig:testAV}
\end{figure}

In Figure \ref{fig:testAV2} we draw the variation in time of the total emissions produced along roads 2 and 3, where the dynamics are mainly influenced by the presence of GPS data. 
To explain the different trend of the curves, we need to look at the traffic dynamics after $60\,\min$ of simulation, see Figure \ref{fig:testAV}\subref{fig:AVdens3}. Here we observe a slowing vehicle at the end of road 1 that forces the macroscopic dynamics to slow down abruptly. Therefore, few trucks enter road 2, causing the sharp decrease in emissions shown in Figure \ref{fig:testAV2} (blue line). This phenomenon is not captured by the model without GPS data (red line). Then, the congestion at the end of road 1 slowly melts away and the trucks cross the intersection towards road 2, causing the slowdown still visible at the end of the simulation, see Figure \ref{fig:testAV}\subref{fig:AVdens4}. The trend of the two emission curves, calculated with and without GPS data (blue and red line respectively), differ from then on, since the first model is able to detect significant changes in traffic speed and acceleration.
On road 3, instead, the presence of real data produces higher emission values than the model without vehicle trajectories. 

\begin{figure}[h!]
\centering
\includegraphics[width=0.3\columnwidth]{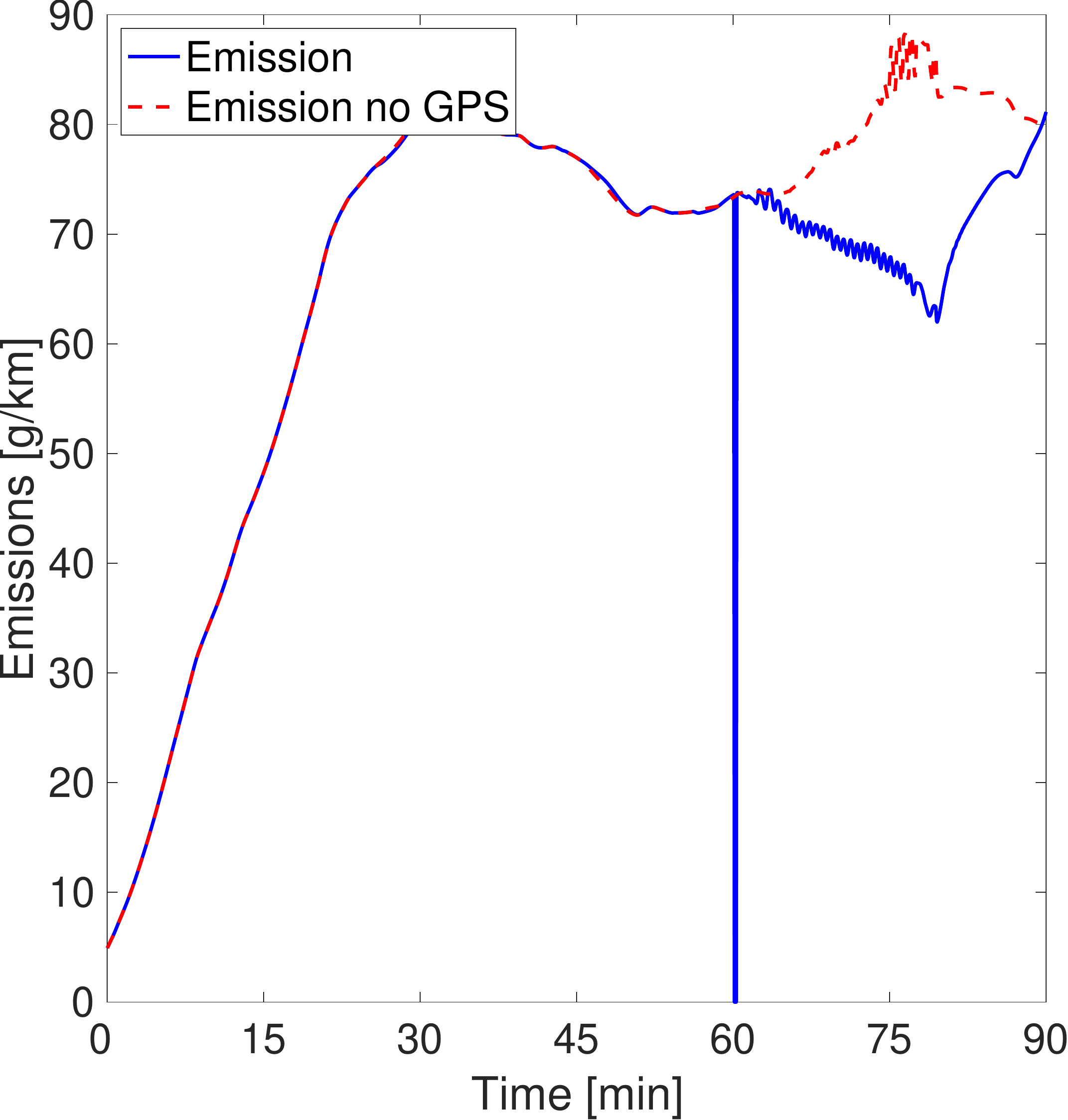}
\quad
\includegraphics[width=0.3\columnwidth]{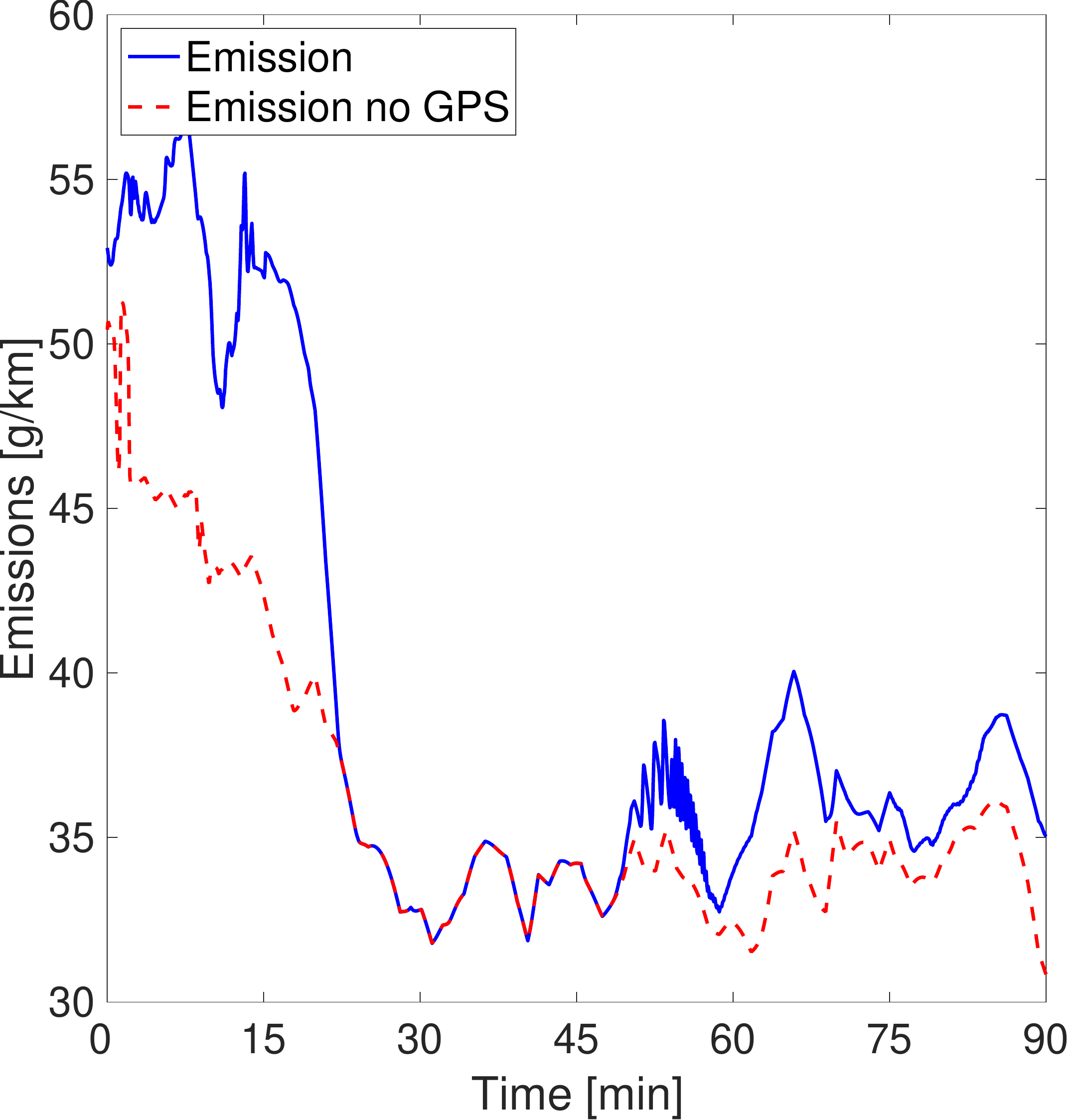}
\caption{Section \ref{sec:autovieData} test. Total emissions on road 2 (left) and road 3 (right).}
\label{fig:testAV2}
\end{figure}

In Figure \ref{fig:strada3}, we further investigate the dynamics on the last 15 km of road 3, drawing the density, speed, acceleration and $\nox$ emissions estimated at different times. Note that the direction of motion is from right to left, according to the network depicted in Figure \ref{fig:rete}. We observe that the presence of real trajectory data is responsible for the sudden changes in speed and acceleration that strongly influence the density and the emissions estimates.

\begin{figure}[h!]
\centering
\subfloat[][Density $t=2\,\min$]{
\includegraphics[width=0.23\columnwidth]{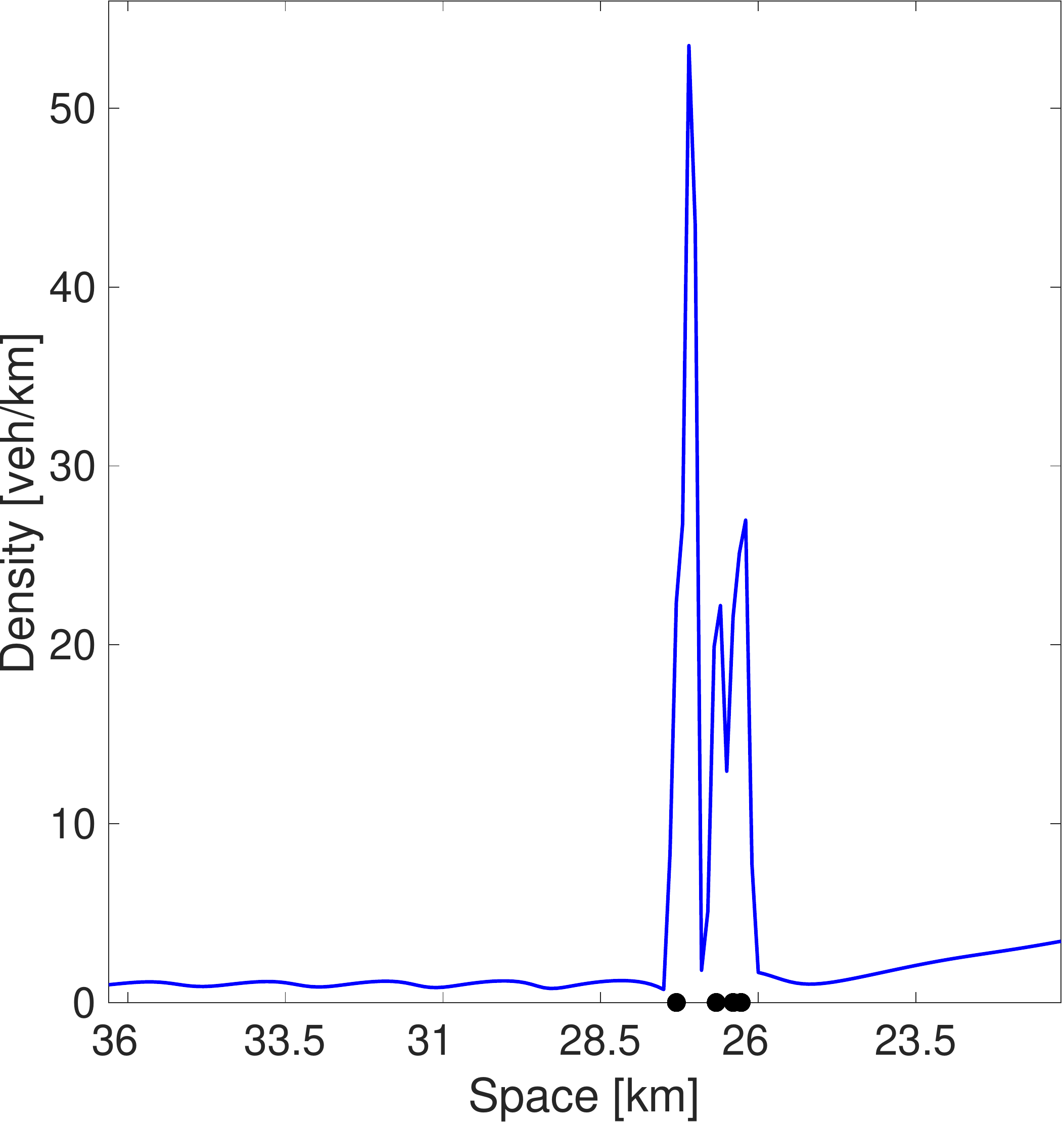}
}
\subfloat[][Speed $t=2\,\min$]{
\includegraphics[width=0.23\columnwidth]{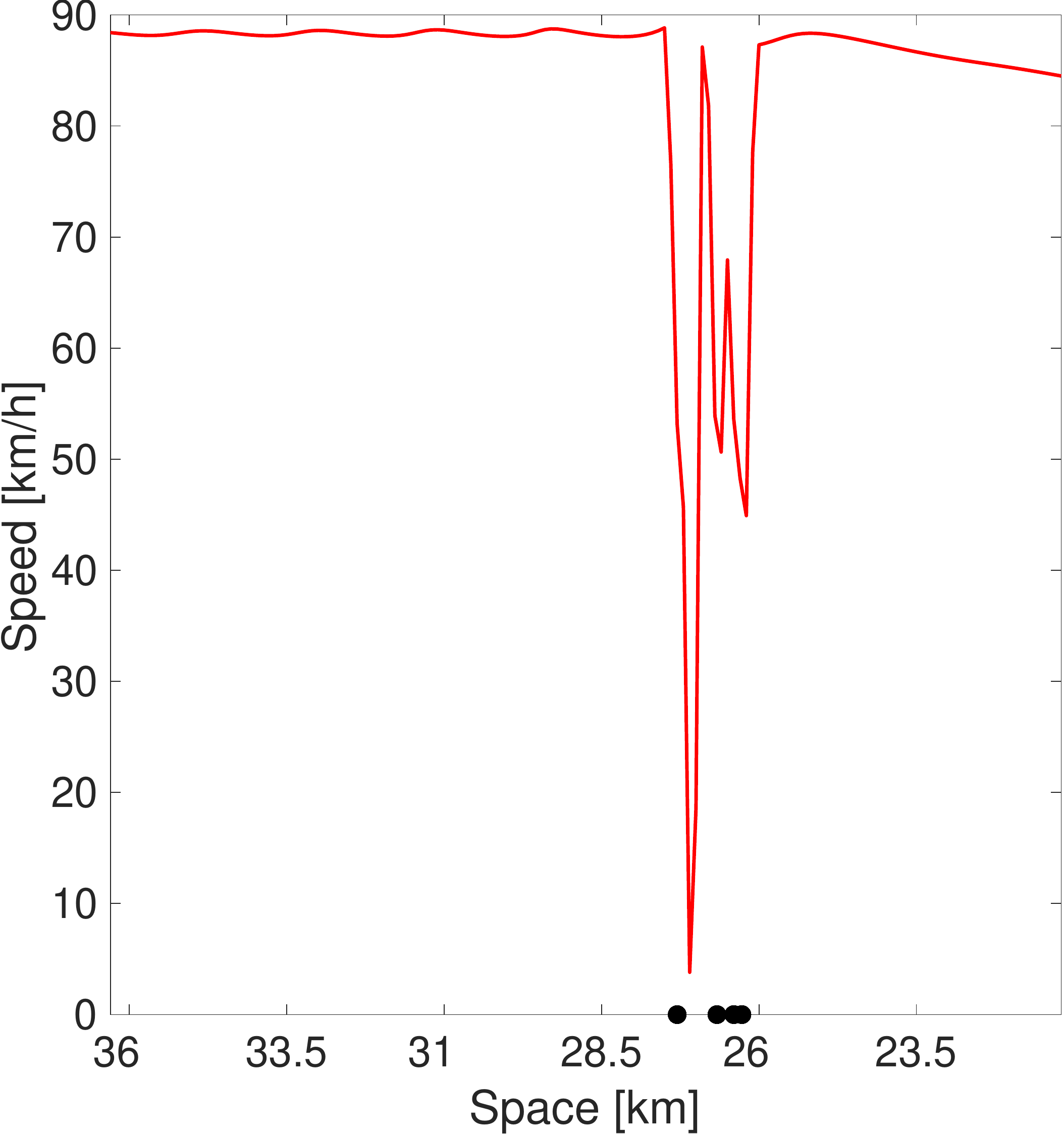}
}
\subfloat[][Accel. $t=2\,\min$]{
\includegraphics[width=0.231\columnwidth]{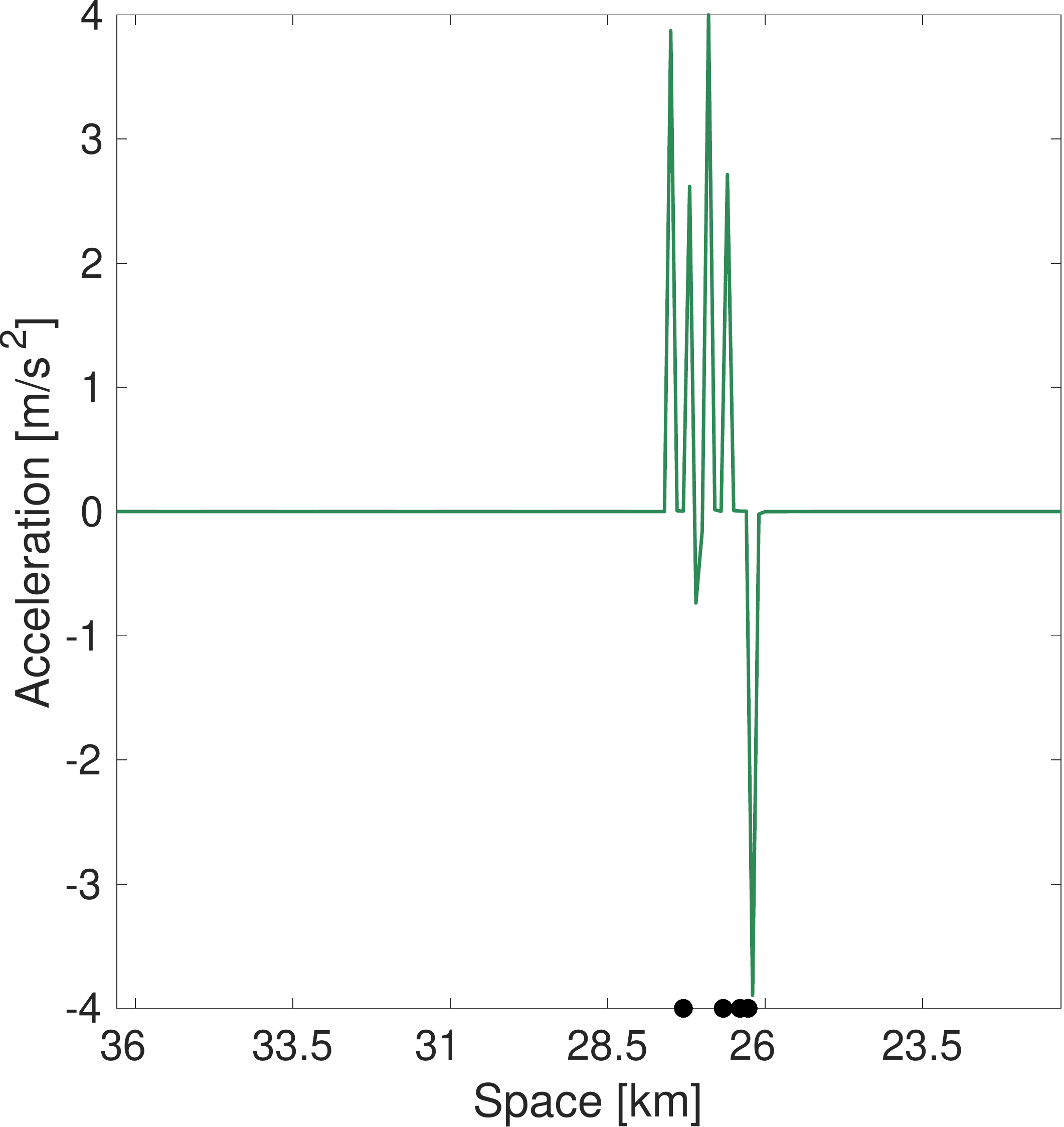}
}
\subfloat[][Emissions $t=2\,\min$]{
\includegraphics[width=0.231\columnwidth]{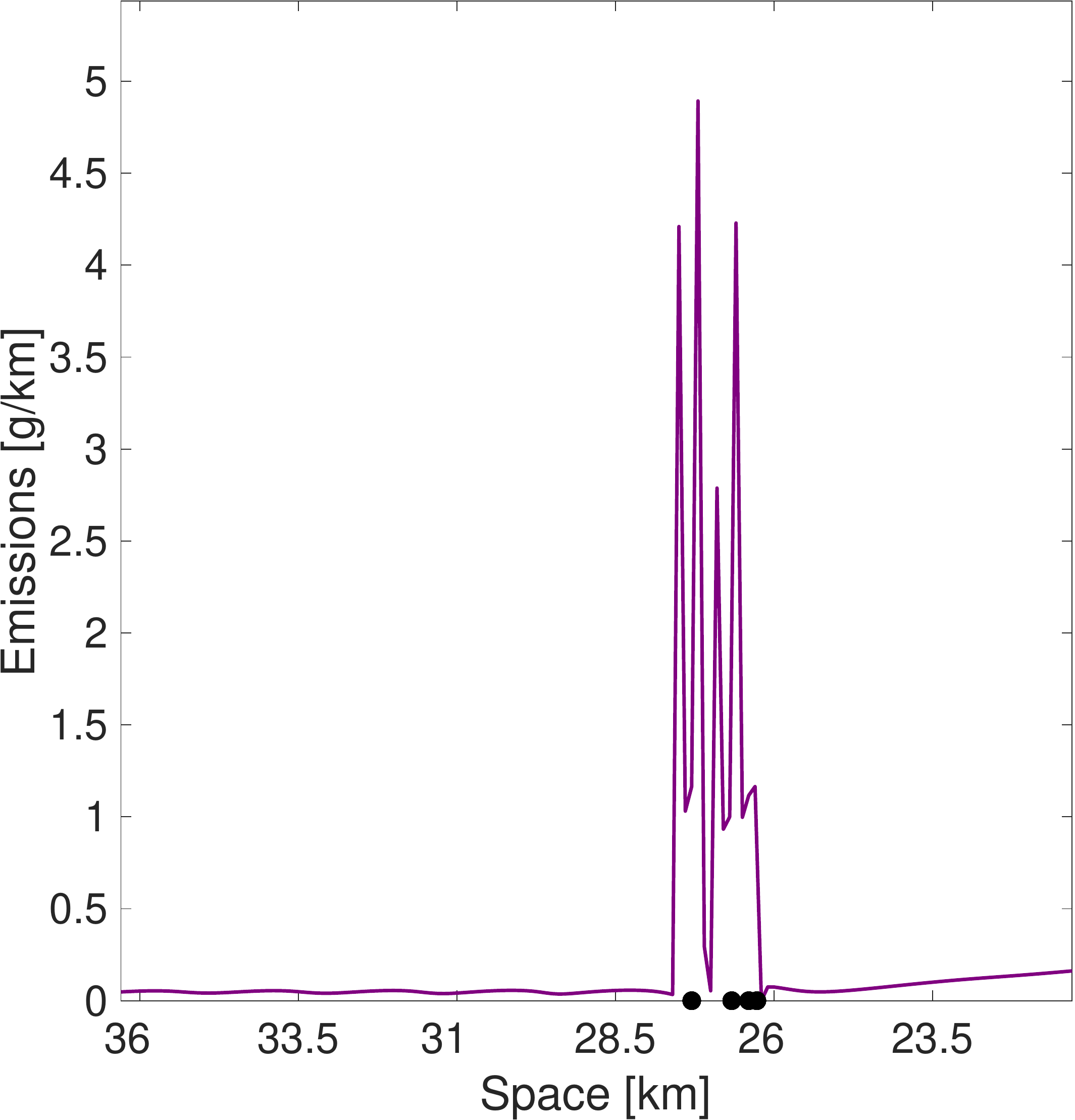}
}\\
\subfloat[][Density $t=67\,\min$]{
\includegraphics[width=0.23\columnwidth]{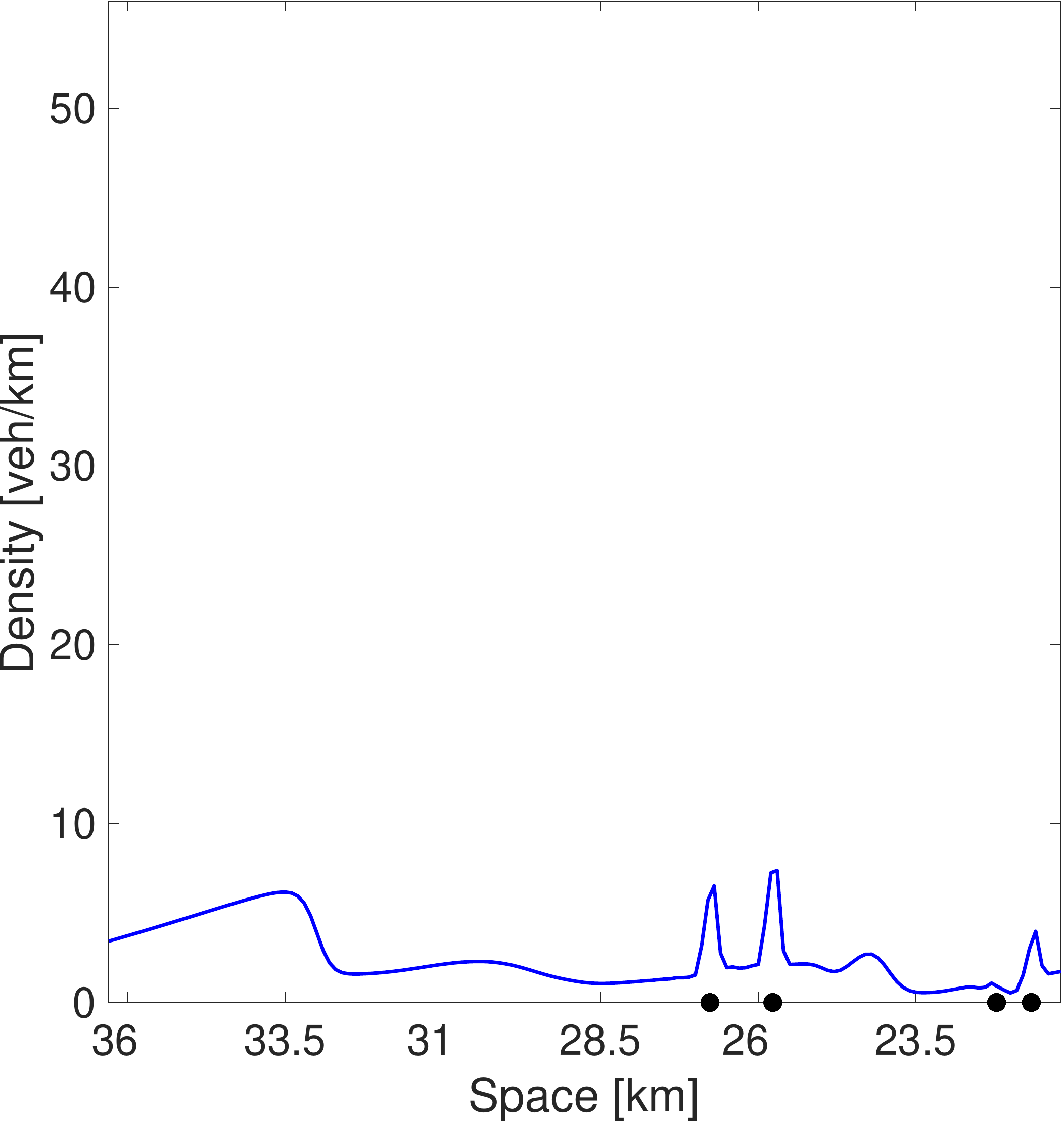}
}
\subfloat[][Speed $t=67\,\min$]{
\includegraphics[width=0.23\columnwidth]{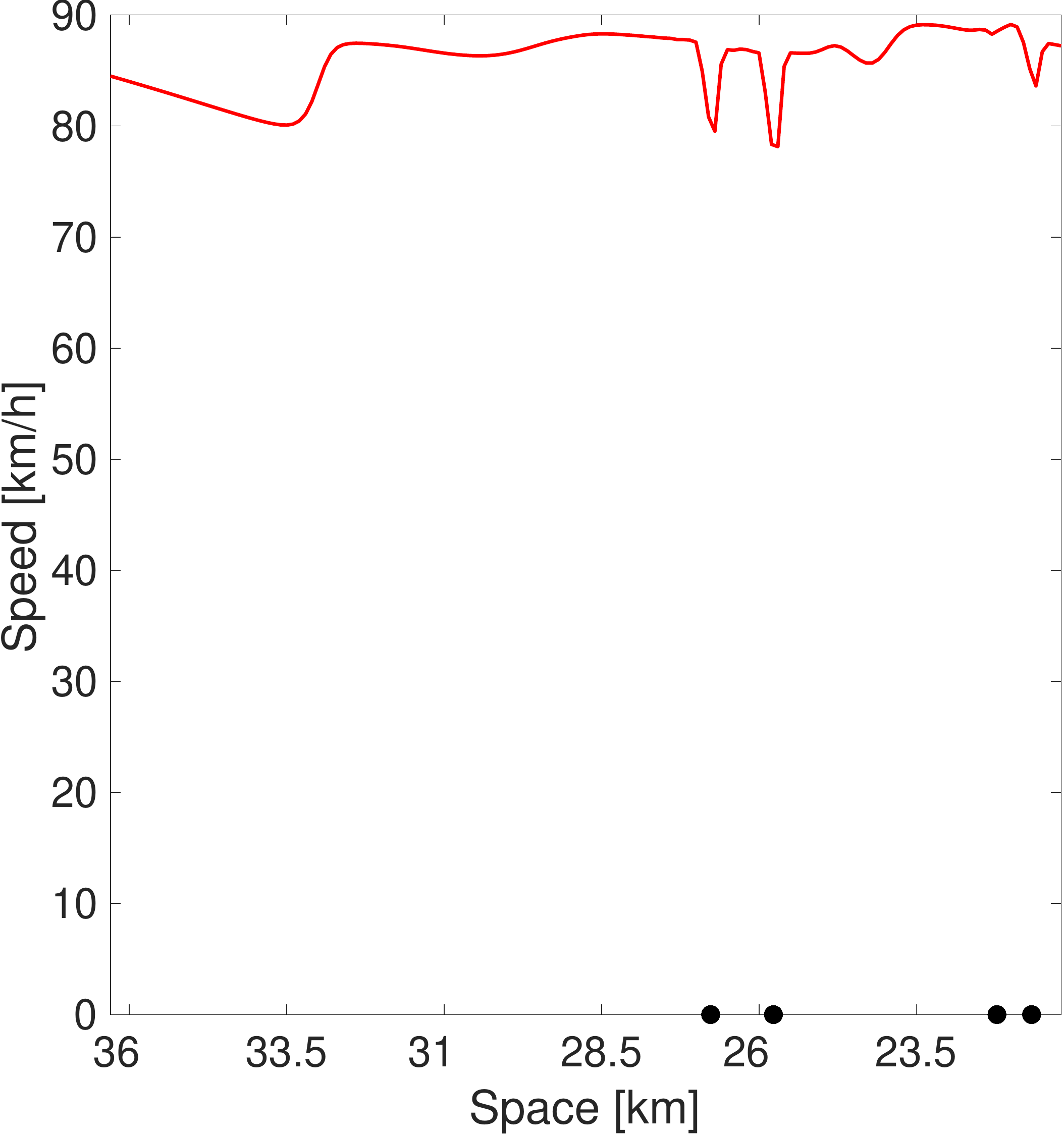}
}
\subfloat[][Accel. $t=67\,\min$]{
\includegraphics[width=0.231\columnwidth]{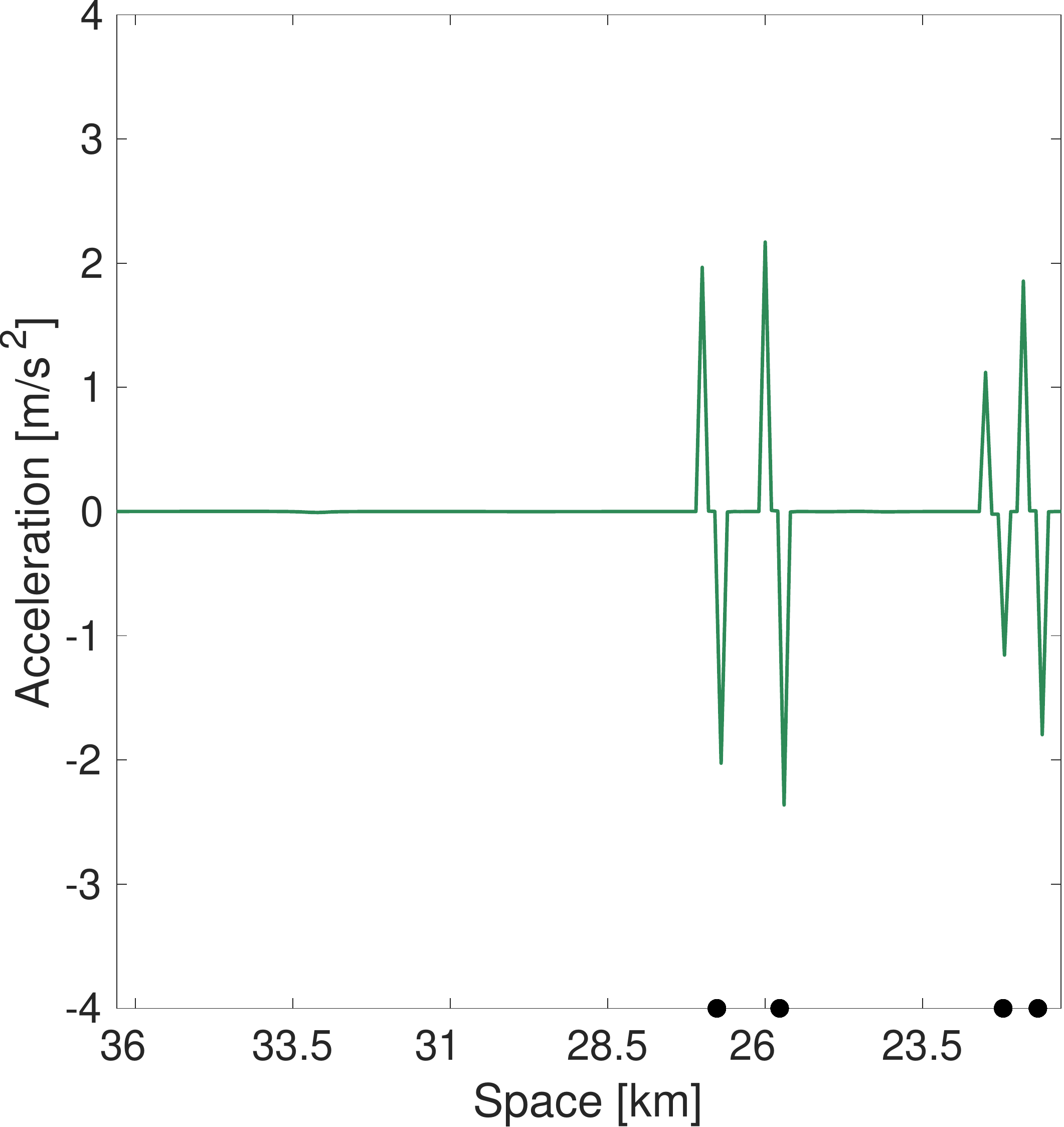}
}
\subfloat[][Emissions $t=67\,\min$]{
\includegraphics[width=0.231\columnwidth]{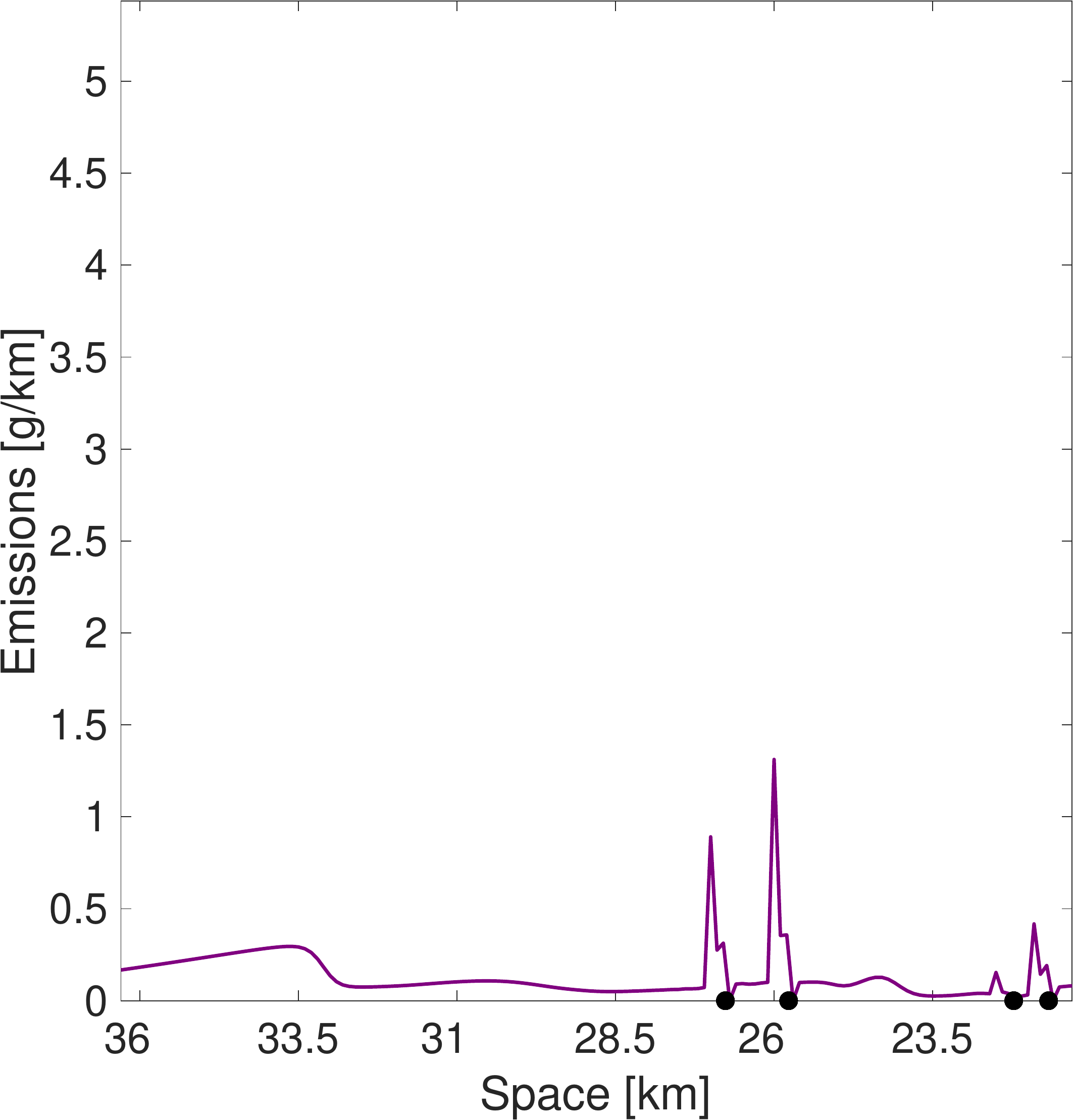}
}\\
\subfloat[][Density $t=85\,\min$]{
\includegraphics[width=0.23\columnwidth]{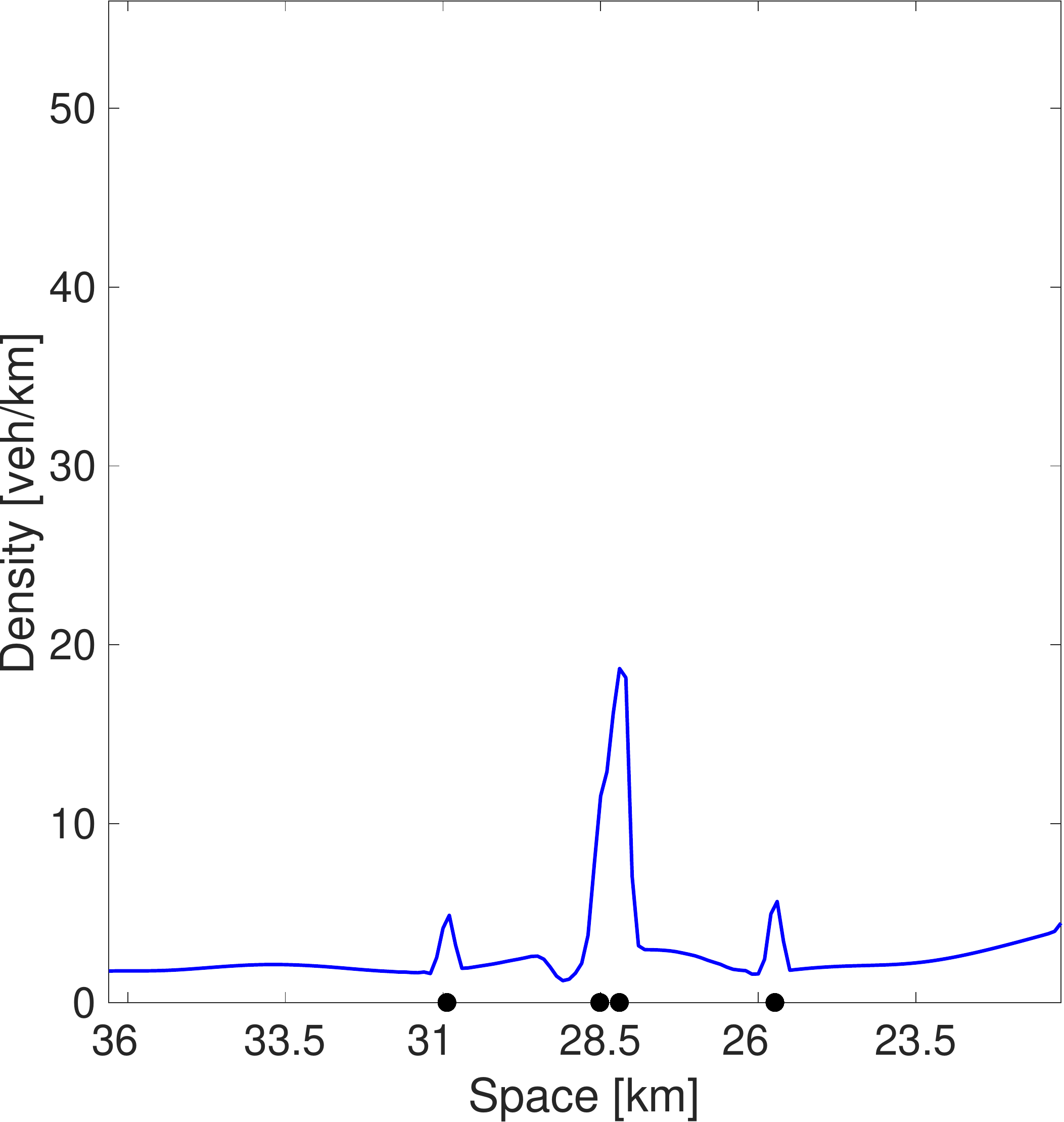}
}
\subfloat[][Speed $t=85\,\min$]{
\includegraphics[width=0.23\columnwidth]{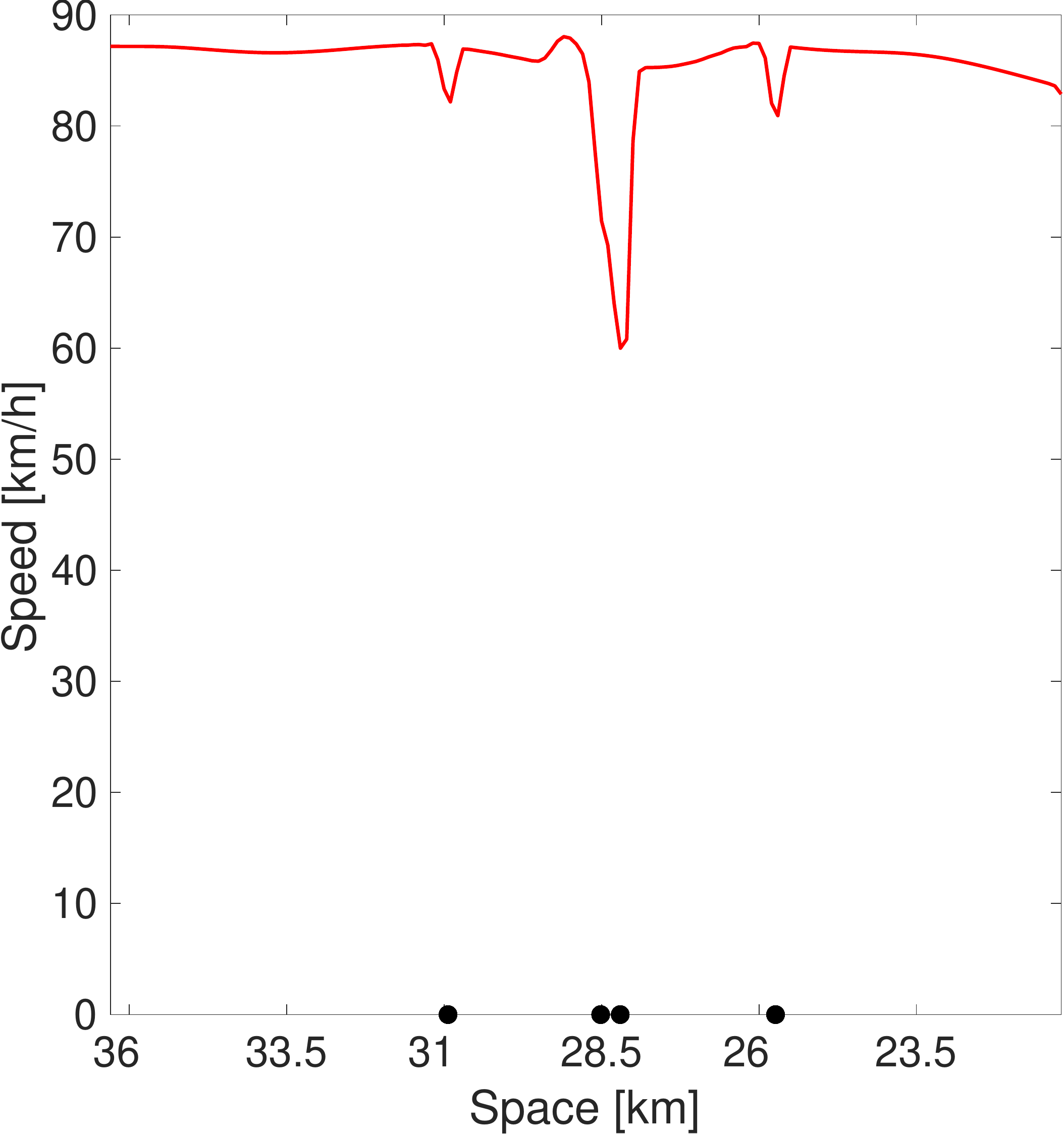}
}
\subfloat[][Accel. $t=85\,\min$]{
\includegraphics[width=0.231\columnwidth]{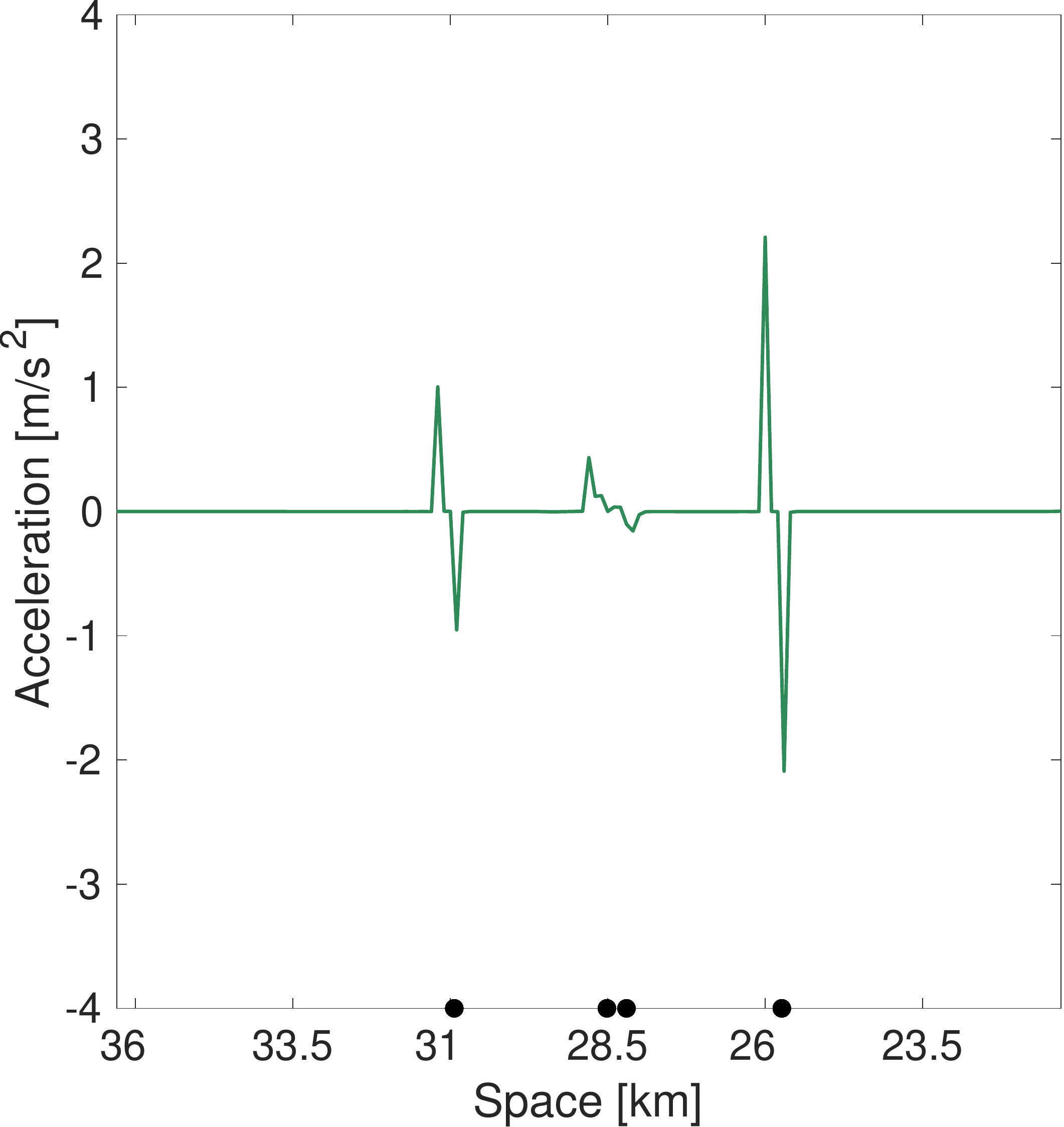}
}
\subfloat[][Emissions $t=85\,\min$]{
\includegraphics[width=0.231\columnwidth]{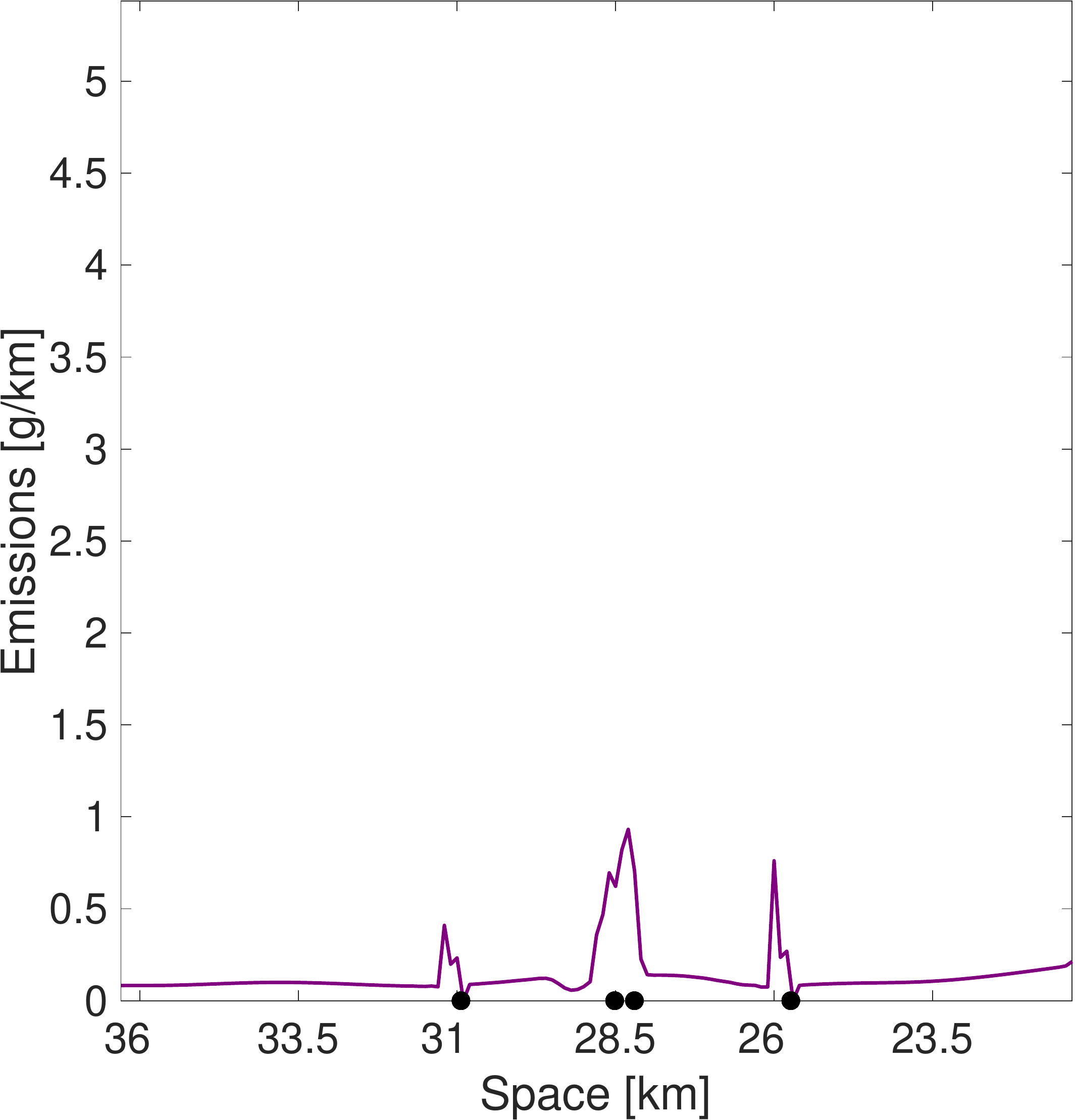}
}
\caption{Section \ref{sec:autovieData} test. Density, speed, acceleration and $\nox$ emissions on the last 15 km of road 3 at different times.}
\label{fig:strada3}
\end{figure}

\medskip

Finally, we analyze the diffusion of $\nox$ pollutants into the air. Let us denote by $\psi$ the concentration of $\nox$ in unit of weight per unit of volume. First of all we assume that $\psi$ is constant along the $z$-axis, in order to reduce to a two-dimensional problem on a domain $\Omega$ in which the network is located. More precisely, we set $\Omega=[-L_{x},L_{x}]\times[-L_{y},L_{y}]$ where $L_{x}=15\,\km$ and $L_{y}=5\,\km$; hence we focus our attention on a rectangle around the junctions that involves only the four horizontal roads, see Figure \ref{fig:scheletro}.
The domain $\Omega$ is discretized through a grid of steps $\dx=\dy=10\,\mymeter$; to do this, we divide each cell of the traffic dynamics ($100\,\mymeter$ long) into 10 smaller cells that inherit the density, velocity, acceleration and emissions previously computed.
Then, we use the emissions obtained from the traffic dynamics as source term of the following diffusion problem
\begin{equation}\label{eq:diffOriz}
	\begin{cases}
		\disp\frac{\de \psi}{\de t}(x,y,t)-\mu\Delta \psi(x,y,t) = S(x,y,t) &\quad\text{in $\Omega\times(0,T]$}\smallskip\\
		\psi(x,y,0) = 0 &\quad\text{in $\Omega$},	
	\end{cases}
\end{equation}
where $\mu$ is the diffusion coefficient, fixed as $\mu=10^{-8}\,\km^{2}/\myhour$ for aerosols \cite{Sportisse2010}. To define the source term $S$ we exploit the emission rates $E_{r}(x,t)$ computed through \eqref{eq:emissioni} on the horizontal roads of the network. Since the traffic model is one-dimensional in space, we trivially extend the emissions into the $y$-axis as
\begin{equation*}
	E(x,y,t) = \begin{cases}
		E_{r}(x,t) &\quad\text{if $x\in[a_{r},b_{r}]$, $y\in[c_{r},d_{r}]$, $r=1,\dots,4$, $t\in[0,T]$}\\
		0&\quad\text{otherwise,}
	\end{cases} 
\end{equation*}
where $[a_{r},b_{r}]$ and $[c_{r},d_{r}]$ are the horizontal and vertical length of the road $r$ in $\Omega$, respectively (in our one-dimensional configuration $d_r-c_r=\dy$). Then, the source term $S$ is defined as $S(x,y,t)=E(x,y,t)/\dx^{3}$.

\begin{figure}[h!]
\centering
\begin{overpic}[width=0.25\columnwidth]{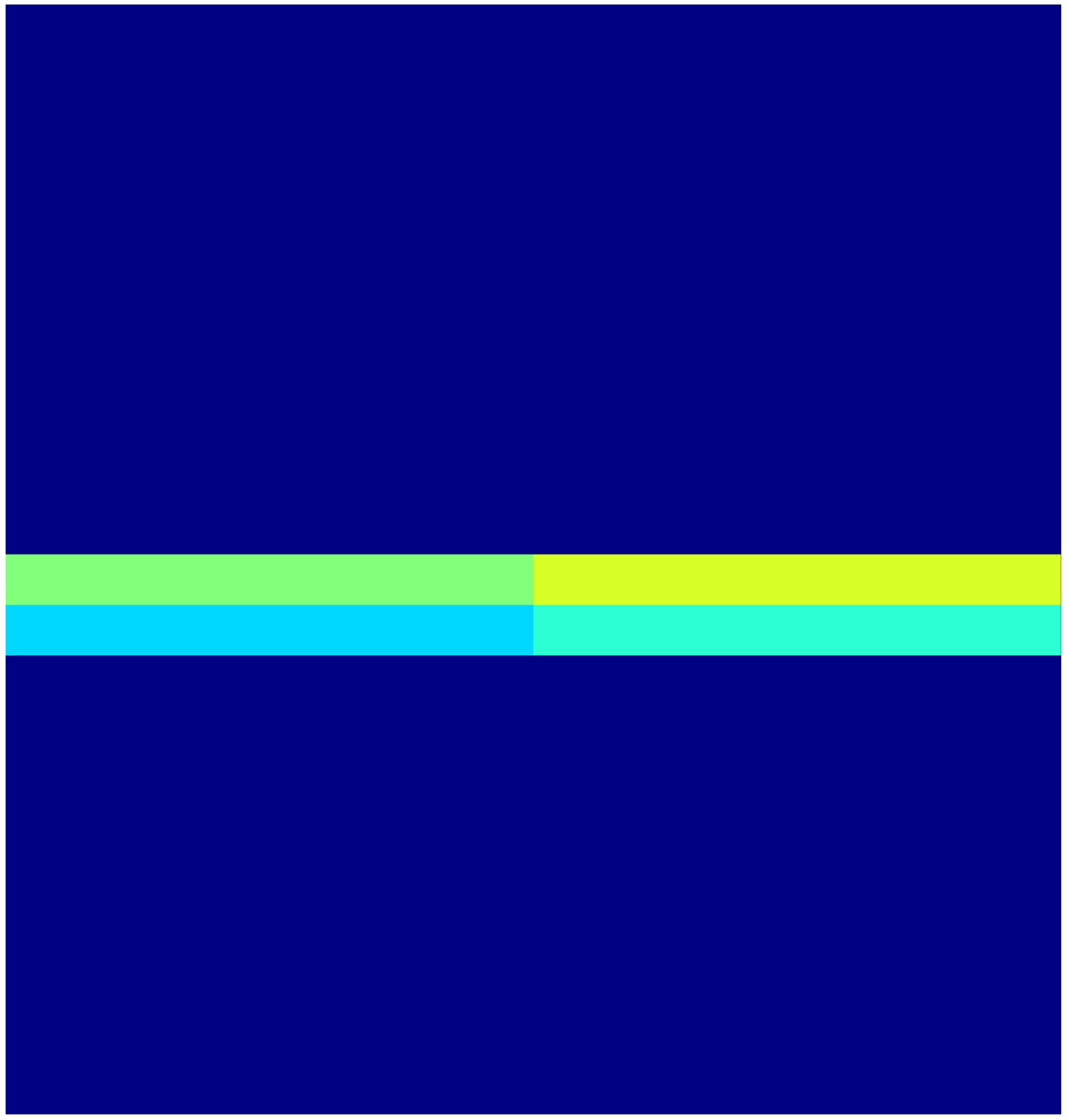}
\put(0.5,46){\color{black}\linethickness{0.02cm}\line(92,0){94.2}}
\put(47.6,41.45){\color{black}\linethickness{0.02cm}\line(0,2){9}}
\put(25,48){\vector(-1,0){10}}\put(27,46.6){\footnotesize 4}
\put(19,41.8){\footnotesize 1} \put(23,43.5){\vector(1,0){10}}
\put(75,48){\vector(-1,0){10}}\put(77,46.6){\footnotesize 3}
\put(69,41.8){\footnotesize 2}\put(73,43.5){\vector(1,0){10}}
\put(2,92){\color{white} $\Omega$}
\end{overpic}
\caption{Section \ref{sec:autovieData} test. Domain $\Omega$ built around the junctions involving the four horizontal roads.}
\label{fig:scheletro}
\end{figure}

The diffusion problem \eqref{eq:diffOriz} is numerically solved with an explicit finite difference scheme.
In Figure 
\ref{fig:diffusione2} we show the source term of $\nox$ emissions and their diffusion in air. In the top plots we observe several red peaks corresponding to high amounts of $\nox$ emissions due to the traffic dynamics. The red peaks on road 3 are connected to the high emission values shown in the last column of Figure \ref{fig:strada3}. The bottom plots show the spread of $\nox$ concentration in $\Omega$. We observe that the diffusion of pollutants is slower than the source term, and is strongly influenced by microscopic information. Indeed, the presence of real data allows us to take into account variations in speed and acceleration that we would not be aware of without the trajectory data.
Therefore, the use of GPS data enables us to approximate the source term well and to reproduce at macroscopic level the emission peaks due to unpredictable traffic dynamics.

\begin{figure}[h!]
\centering
\subfloat[][Source term $t=2\,\mymin$]{\label{fig:source1}
\begin{overpic}[width=0.3\columnwidth,tics=5]{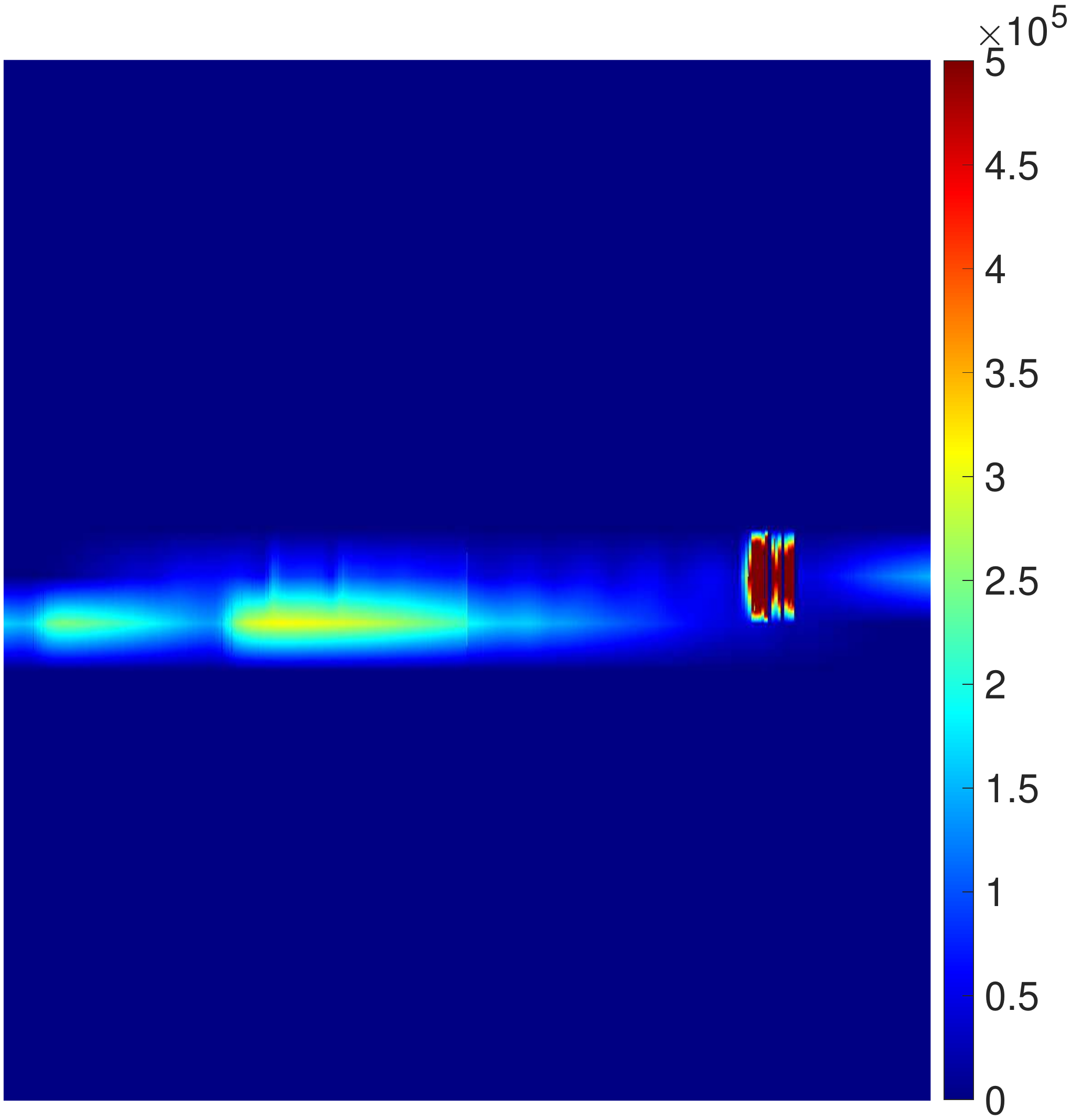}
\put(0.6,45){\tikz \draw[red,dashed,thick] (0,0)--(4,0);}
\put(42,1.9){\tikz \draw[red,dashed,thick] (0,0)--(0,4.48);}
\end{overpic}
}
\subfloat[][Source term $t=67\,\mymin$]{
\begin{overpic}[width=0.3\columnwidth,tics=5]{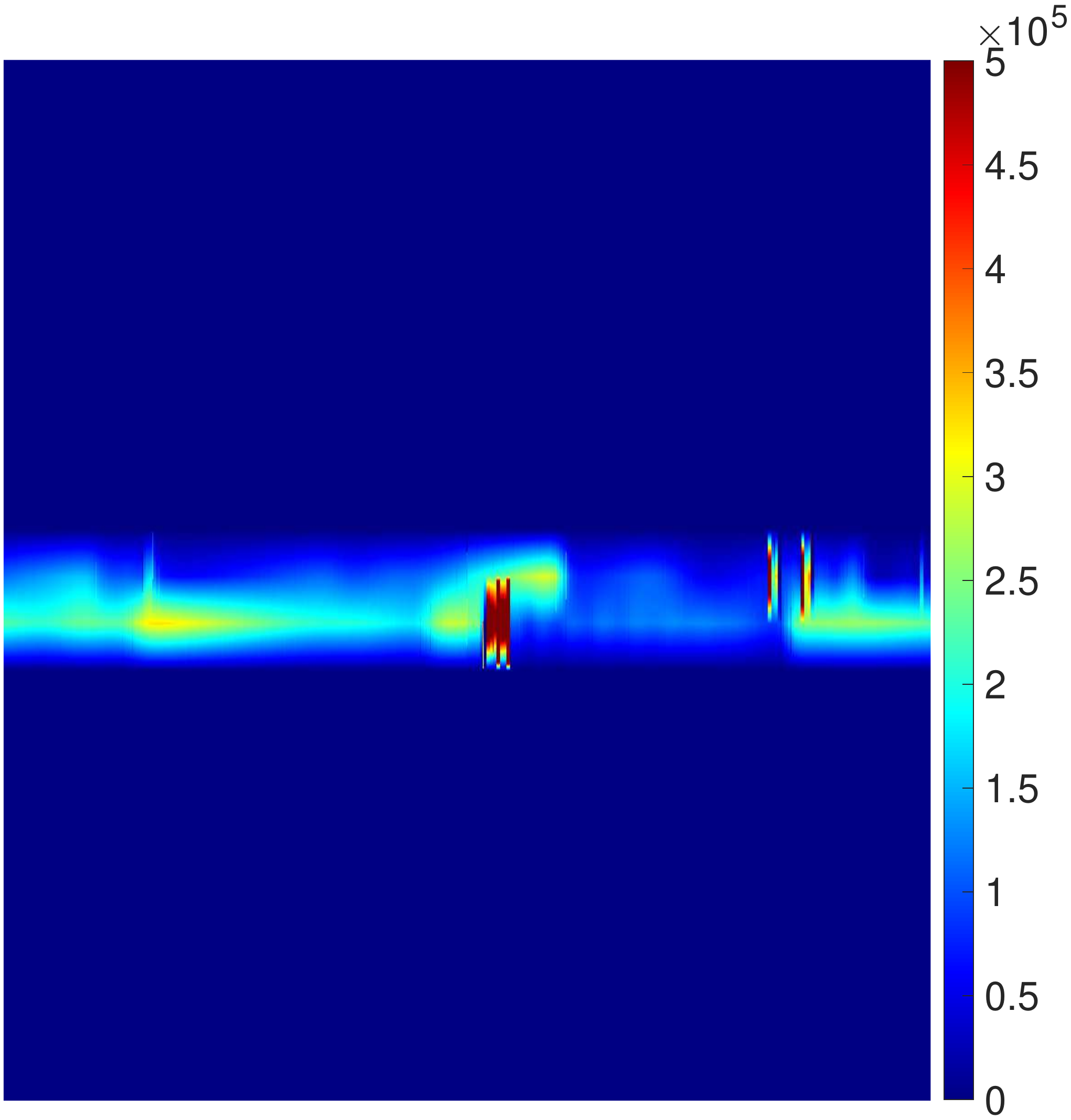}
\put(0.6,45){\tikz \draw[red,dashed,thick] (0,0)--(4,0);}
\put(42,1.9){\tikz \draw[red,dashed,thick] (0,0)--(0,4.48);}
\end{overpic}
}
\subfloat[][Source term $t=85\,\mymin$]{
\begin{overpic}[width=0.3\columnwidth,tics=5]{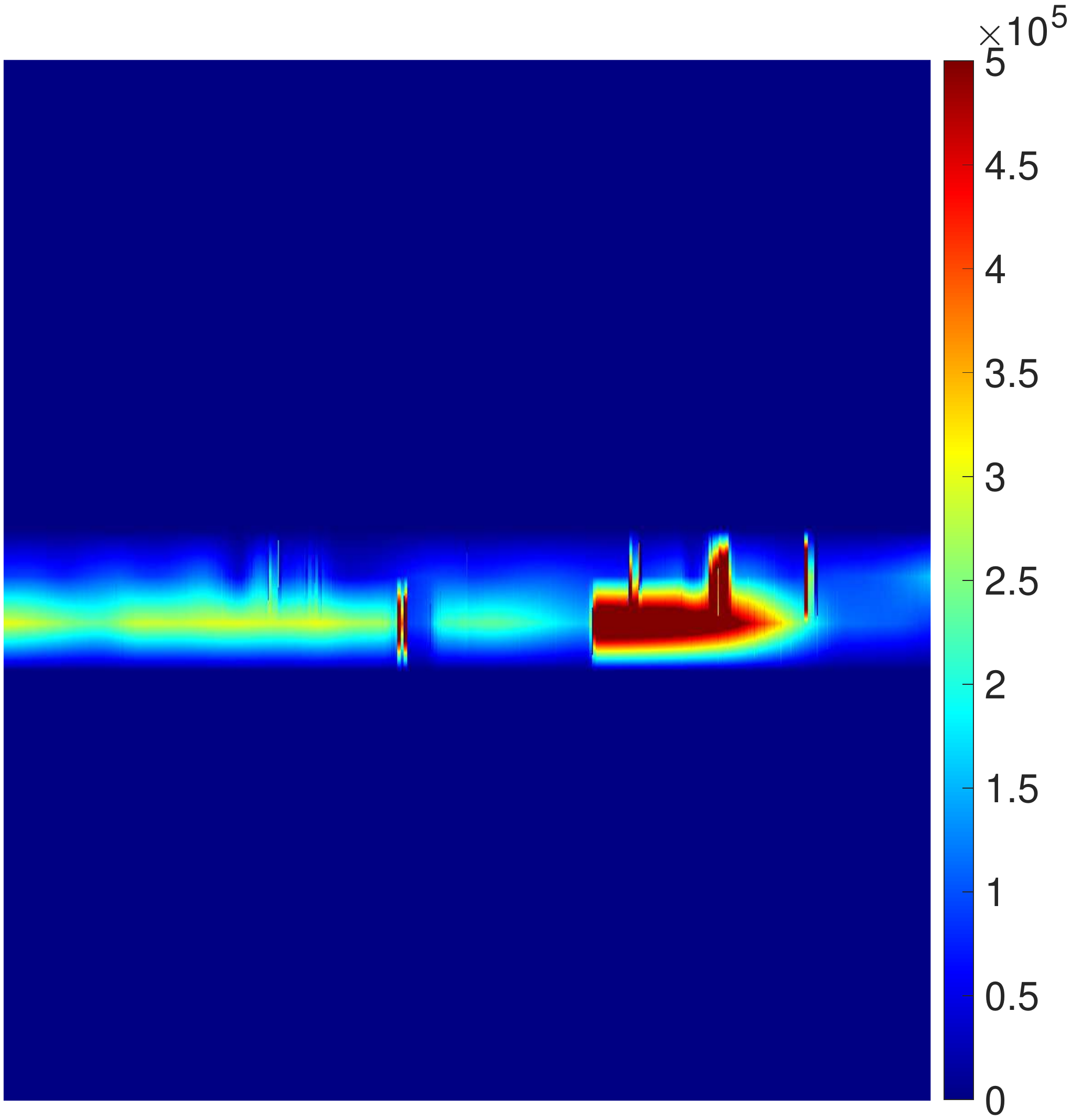}
\put(0.6,45){\tikz \draw[red,dashed,thick] (0,0)--(4,0);}
\put(42,1.9){\tikz \draw[red,dashed,thick] (0,0)--(0,4.48);}
\end{overpic}
}
\\
\subfloat[][Concentration $t=2\min$]{
\begin{overpic}[width=0.3\columnwidth,tics=5]{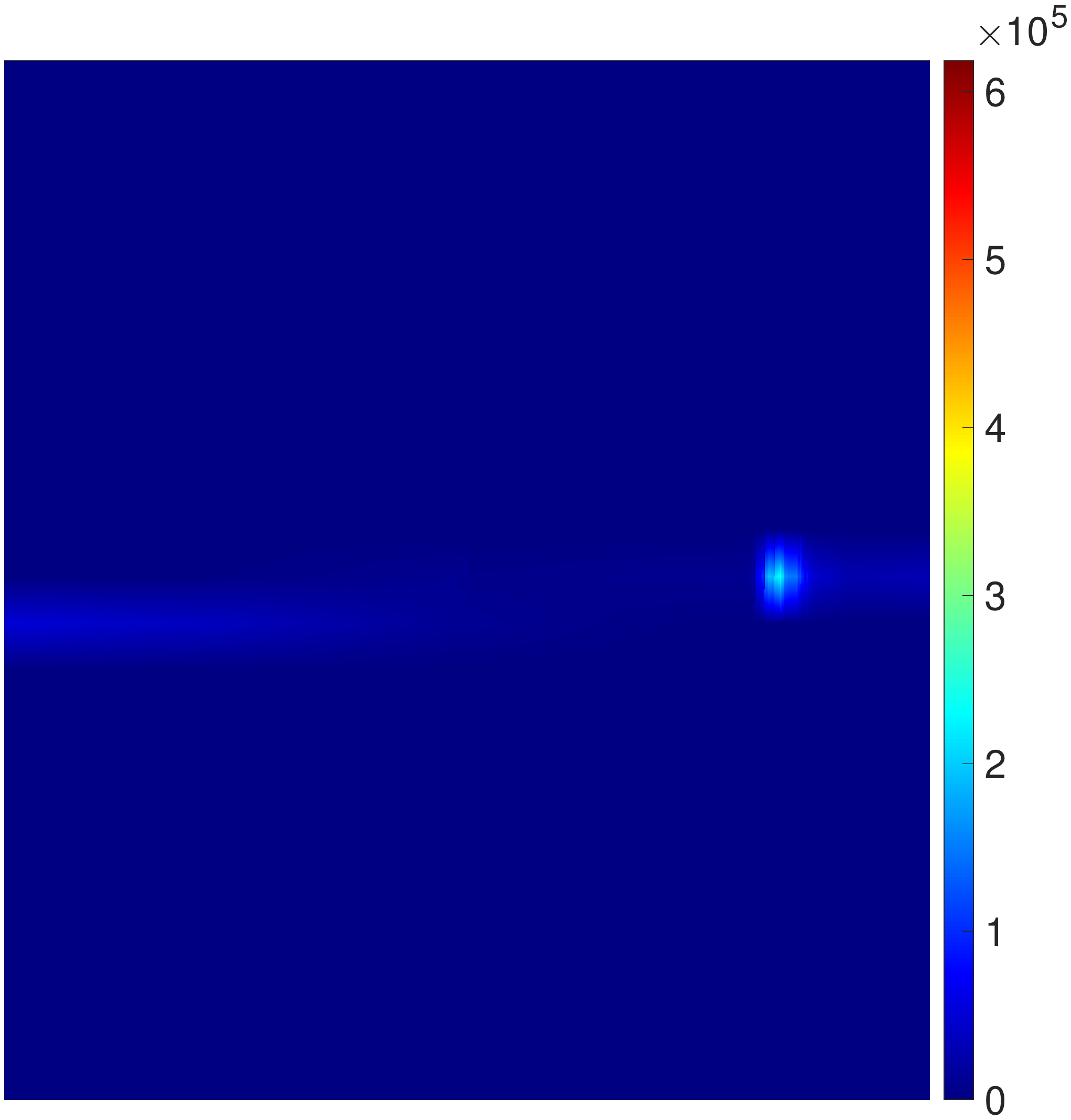}
\put(0.6,45){\tikz \draw[red,dashed,thick] (0,0)--(4,0);}
\put(42,1.9){\tikz \draw[red,dashed,thick] (0,0)--(0,4.48);}
\end{overpic}
}
\subfloat[][Concentration $t=67\min$]{
\begin{overpic}[width=0.3\columnwidth,tics=5]{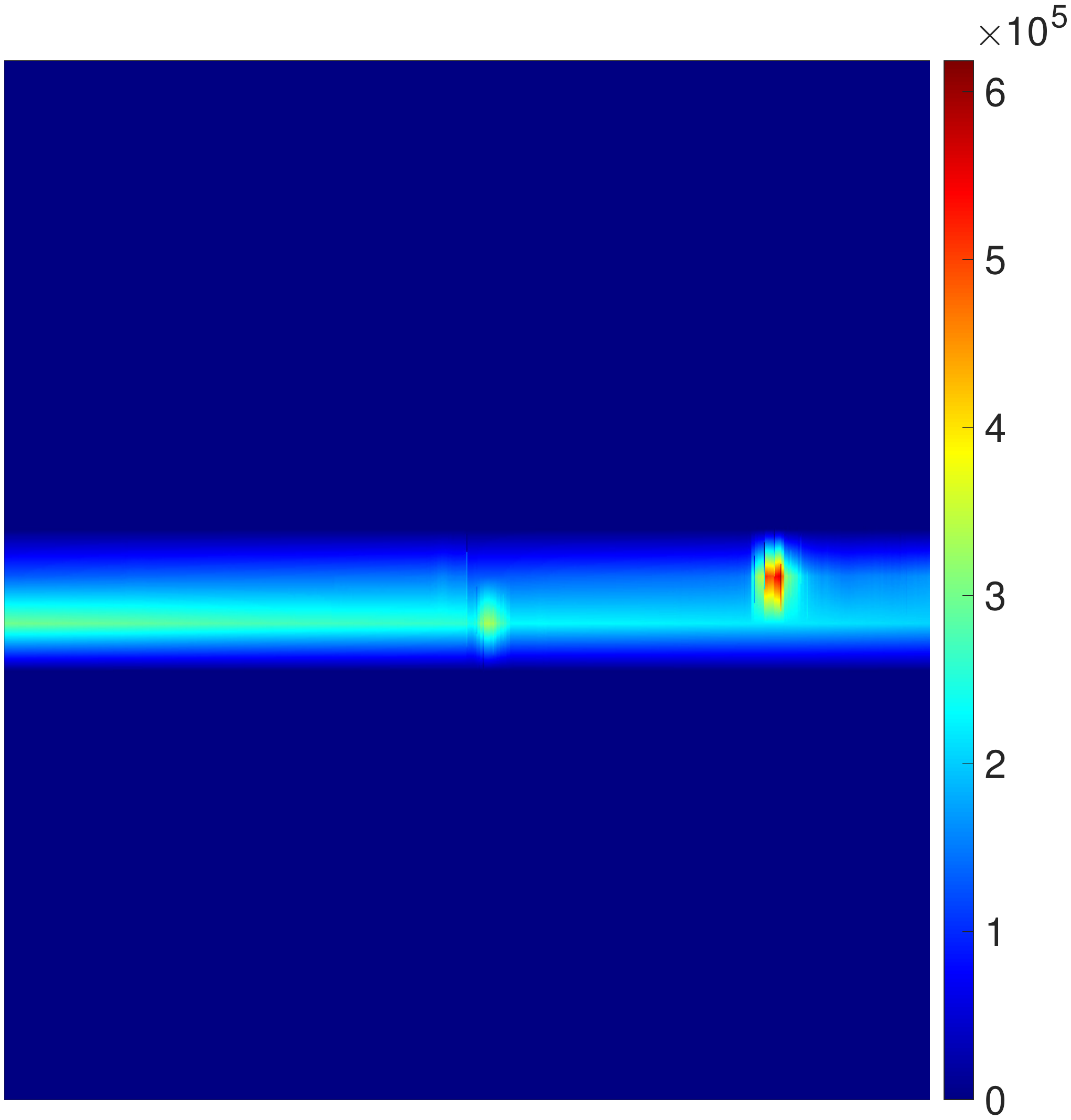}
\put(0.6,45){\tikz \draw[red,dashed,thick] (0,0)--(4,0);}
\put(42,1.9){\tikz \draw[red,dashed,thick] (0,0)--(0,4.48);}
\end{overpic}
}
\subfloat[][Concentration $t=85\min$]{
\begin{overpic}[width=0.3\columnwidth,tics=5]{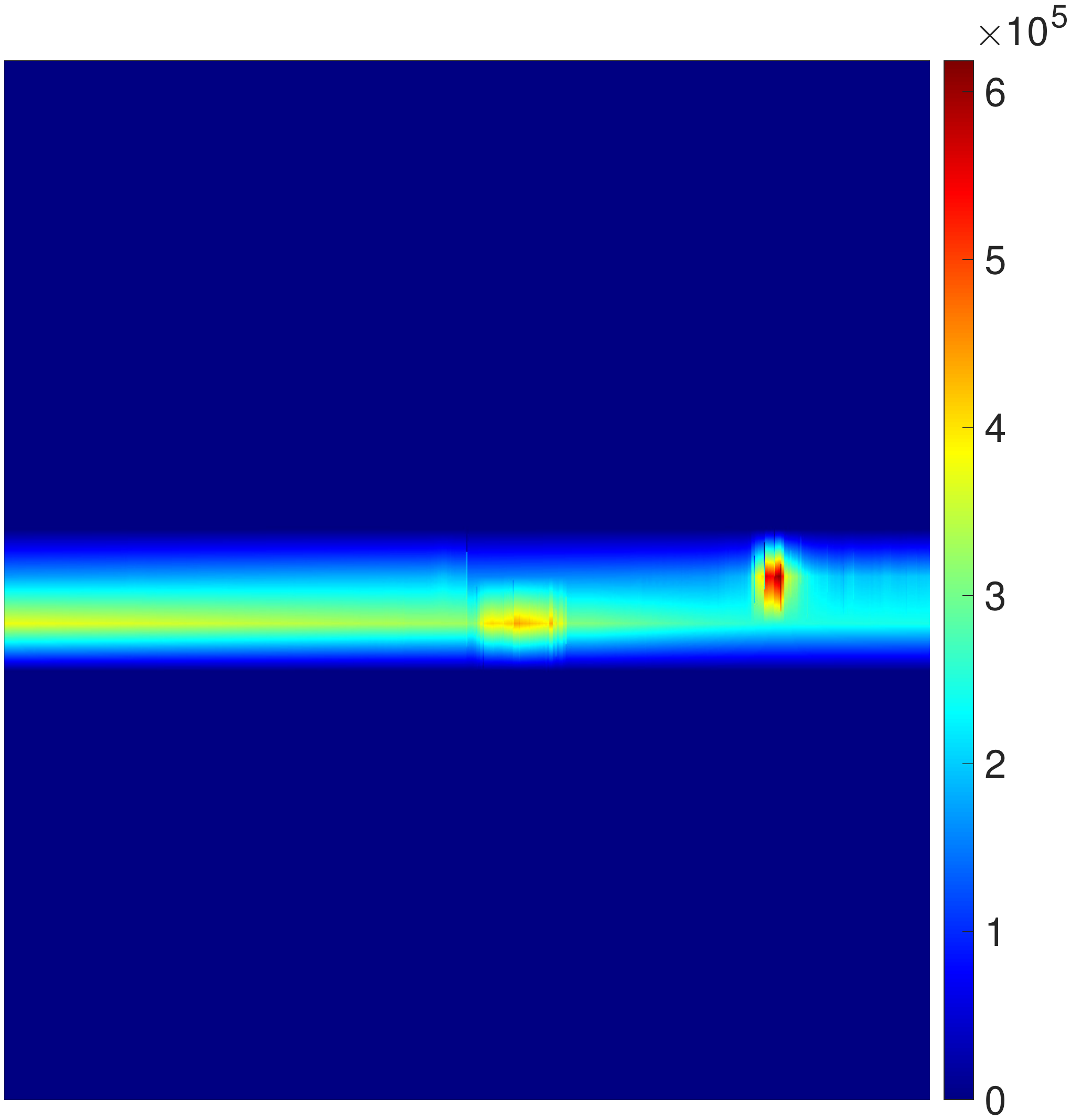}
\put(0.6,45){\tikz \draw[red,dashed,thick] (0,0)--(4,0);}
\put(42,1.9){\tikz \draw[red,dashed,thick] (0,0)--(0,4.48);}
\end{overpic}}
\caption{Section \ref{sec:autovieData} test. Source term of $\nox$ emissions (top) and diffusion of $\nox$ concentration into the air (bottom) at different times. The dashed red lines are used to identify the four roads in $\Omega$.}
\label{fig:diffusione2}
\end{figure}

\section{Conclusions}\label{sec:conclusion}

In this paper, we proposed a second order macroscopic traffic model that integrates multiple trajectory data into the velocity function as a tool to compute traffic quantities for estimating emissions. The combination of a macroscopic model with microscopic data is suggested by the computational efficiency of the former and the high accuracy of the latter.  The proposed numerical tests show that, even when trajectory data are sparse, they make it possible to reproduce variations in speed and acceleration that otherwise would not be observable. As a result, we obtained more accurate approximations of emissions.

In the near future we plan to improve the study by distinguishing between light and heavy vehicles. We will consider macroscopic multi-class models and appropriate emission formulas that are calibrated with respect to the vehicle types.

\appendix
\section{Technical proofs.}\label{sec:proof}

\begin{proof}[Proof of Proposition \ref{prop:monotonia1}]
It is sufficient to prove that under condition \eqref{eq:CFL_flux} the method is monotone. Let  
$$
H(j,n,\boldsymbol\rho^n) = \rho^{n}_{j}-\frac{\dt}{\dx}(F^{n}_{j+1/2}-F^{n}_{j-1/2}).
$$
Below we check that
$$
\frac{\partial}{\partial \rho^n_l}H(j,n,\boldsymbol\rho^n)\geq 0 \quad \mbox{ for all } l,j,\boldsymbol\rho^n.
$$
We have 
$$
\frac{\partial}{\partial \rho^n_l}H(j,n,\boldsymbol\rho^n) =
\left\{\hspace{-0.15cm}\begin{array}{ll}
\disp\frac{\dt}{\dx} \disp\frac{\partial F^n_{j-1/2}}{\partial \rho^n_{j-1}} & \mbox{ if } l=j-1
\medskip\\
1 + \disp\frac{\dt}{\dx} \disp\frac{\partial}{\partial \rho^n_{j}}\Big(F^n_{j-1/2}-F^n_{j+1/2}\Big) & \mbox{ if } l=j
\medskip\\
-\disp\frac{\dt}{\dx} \disp\frac{\partial F^n_{j+1/2}}{\partial \rho^n_{j+1}} & \mbox{ if } l=j+1.
\end{array}\right.
$$
By construction, the sending and receiving function in \eqref{eq:SRfunct} are monotone in $\rho$, i.e.\
$\partial S(\cdot,\cdot,\rho)/\partial\rho \geq 0$ and $\partial R(\cdot,\cdot,\rho)/\partial \rho \leq 0$.

In the following we set $S^n_j = S(x_j,t^n,\rho^n_j)$ and $R^n_j = R(x_j,t^n,\rho^n_j)$.
For $l=j-1$, we have
$$
\disp\frac{\partial F^n_{j-1/2}}{\partial \rho^n_{j-1}} =
\left\{\hspace{-0.15cm}\begin{array}{ll}
0 & \mbox{ if } R^n_j \leq S^n_{j-1}
\medskip\\
\disp\frac{\partial S^n_{j-1}}{\partial \rho^n_{j-1}} \geq 0 
& \mbox{ if } R^n_j > S^n_{j-1}.
\end{array}\right.
$$
Similarly, for $l=j+1$
$$
\disp\frac{\partial F^n_{j+1/2}}{\partial \rho^n_{j+1}} =
\left\{\hspace{-0.15cm}\begin{array}{ll}
0 & \mbox{ if } S^n_j \leq R^n_{j+1}
\medskip\\
\disp\frac{\partial R^n_{j+1}}{\partial \rho^n_{j+1}} \leq 0 
& \mbox{ if } S^n_j > R^n_{j+1}.
\end{array}\right.
$$
Therefore, in both cases $l=j\pm 1$, $\partial H(j,n,\boldsymbol\rho^n)/\partial \rho^n_l \geq 0$.
When $l=j$, we have to check that
\begin{equation}\label{eq:apx_lj}
1+\disp\frac{\dt}{\dx}\disp\frac{\partial}{\partial \rho^n_{j}}\Big(F^n_{j-1/2}-F^n_{j+1/2}\Big) =
1+\disp\frac{\dt}{\dx}\disp\frac{\partial}{\partial \rho^n_{j}}\Big(\min\{S^n_{j-1},R^n_{j}\} - \min\{S^n_{j},R^n_{j+1}\}\Big) \geq 0.
\end{equation}
We analyze the following four cases:
\begin{enumerate}[label=\roman*)]
\item If $S^n_{j-1} \leq R^n_j$ and $S^n_{j} \leq R^n_{j+1}$, then
$$
\disp\frac{\partial}{\partial \rho^n_{j}}\big(\min\{S^n_{j-1},R^n_{j}\} - \min\{S^n_{j},R^n_{j+1}\}\big) = 
\disp\frac{\partial}{\partial \rho^n_{j}}\left(S^n_{j-1} - S^n_j\right)
= -\disp\frac{\partial}{\partial \rho^n_{j}}S^n_j \leq 0.
$$
Thus, to obtain the inequality \eqref{eq:apx_lj}, we assume
\begin{equation*}
1-\disp\frac{\dt}{\dx}\Big|\disp\frac{\partial}{\partial \rho^n_{j}}S^n_j\Big|\geq 0,
\end{equation*}
where $S^n_j = \ff(x_j,t^n,\rho^n_j)$ or $S^n_j=\ff^{\max}(x_j,t^n)$. 
This leads to the condition
\begin{equation*}
1-\disp\frac{\dt}{\dx}\Big|\disp\frac{\partial}{\partial \rho}\ff(x_j,t^n,\rho)\Big|\geq 0.
\end{equation*}

\item If $S^n_{j-1} \leq R^n_j$ and $S^n_{j} > R^n_{j+1}$, then
$$
\disp\frac{\partial}{\partial \rho^n_{j}}\Big(\min\{S^n_{j-1},R^n_{j}\} - \min\{S^n_{j},R^n_{j+1}\}\Big) = 
\disp\frac{\partial}{\partial \rho^n_{j}}\left(S^n_{j-1} - R^n_{j+1}\right)
= 0.
$$
Hence, \eqref{eq:apx_lj} follows.

\item If $S^n_{j-1} > R^n_j$ and $S^n_{j} > R^n_{j+1}$, then
$$
\disp\frac{\partial}{\partial \rho^n_{j}}\Big(\min\{S^n_{j-1},R^n_{j}\} - \min\{S^n_{j},R^n_{j+1}\}\Big) = 
\disp\frac{\partial}{\partial \rho^n_{j}}\left(R^n_{j} - R^n_{j+1}\right)
= \disp\frac{\partial R^n_{j}}{\partial \rho^n_{j}} \leq 0.
$$
Therefore, we assume 
\begin{equation*}
1-\disp\frac{\dt}{\dx}\Big|\disp\frac{\partial}{\partial \rho^n_{j}}R^n_j\Big|\geq 0,
\end{equation*}
where $R^n_j = \ff(x_j,t^n,\rho^n_j)$ or $R^n_j=\ff^{\max}(x_j,t^n)$. 
This leads again to
\begin{equation*}
1-\disp\frac{\dt}{\dx}\Big|\disp\frac{\partial}{\partial \rho}\ff(x_j,t^n,\rho)\Big|\geq 0.
\end{equation*}

\item If $S^n_{j-1} > R^n_j$ and $S^n_{j} \leq R^n_{j+1}$, then
$$
\disp\frac{\partial}{\partial \rho^n_{j}}\Big(\min\{S^n_{j-1},R^n_{j}\} - \min\{S^n_{j},R^n_{j+1}\}\Big) = 
\disp\frac{\partial}{\partial \rho^n_{j}}\left(R^n_{j} - S^n_{j}\right).
$$
Moreover,
\begin{align*}
	\frac{\partial}{\partial \rho^n_{j}}\left(R^n_{j} - S^n_{j}\right)
&=
	\begin{dcases}
		\disp\frac{\partial}{\partial \rho^n_{j}}\Big(\ff^{\max}(x_j,t^n)-\ff(x_j,t^n,\rho^n_j)\Big) &\quad\text{if $\rho^n_j \leq \sigma^n_j$}\\
		\disp-\frac{\partial}{\partial \rho^n_{j}}\Big(\ff^{\max}(x_j,t^n)-\ff(x_j,t^n,\rho^n_j)\Big) &\quad\text{if $\rho^n_j > \sigma^n_j$}
	\end{dcases}
	\\
	& = \begin{dcases}
		-\disp\frac{\partial \ff(x_j,t^n,\rho^n_j)}{\partial \rho^n_j} < 0  &\quad\text{if $\rho^n_j \leq \sigma^n_j$}\\
		\disp\frac{\partial \ff(x_j,t^n,\rho^n_j)}{\partial \rho^n_j} < 0 &\quad\text{if $\rho^n_j > \sigma^n_j$.}
	\end{dcases}
\end{align*}
Hence, we need again
\begin{equation*}
1-\disp\frac{\dt}{\dx}\Big|\disp\frac{\partial \ff(x_j,t^n,\rho)}{\partial \rho}\Big|\geq 0.
\end{equation*}
\end{enumerate}
Summing up, if  
$$
1-\disp\frac{\dt}{\dx}\Big|\frac{\partial \ff(x,t,\rho)}{\partial\rho}\Big|\geq 0 \quad \mbox{ for all } (x,t,\rho)\in\R\times\R^+\times[0,\rhomax],
$$
then the operator $H(j,n,\boldsymbol\rho)$ is non-decreasing with respect to all the components of $\boldsymbol\rho$. Thus, 
\begin{align*}
\mbox{if } 0\leq \rho^n_j \quad \forall j\in\Z \quad &\Rightarrow \quad 0=H(j,n,\boldsymbol 0) \leq H(j,n,\boldsymbol\rho^n) = \rho^{n+1}_j \quad \forall \ j\in\Z, \\
\mbox{if } \rho^n_j\leq \rhomax \quad \forall j\in\Z \quad &\Rightarrow \quad  \rho^{n+1}_j = H(j,n,\boldsymbol \rho^n) \leq H(j,n,\boldsymbol\rhomax) = \rhomax \quad \forall \ j\in\Z, 
\end{align*}
from which the thesis follows.
\end{proof}

\begin{proof}[Proof of Proposition \ref{prop:CFL1}]
The flux function $\ff(x,t,\rho)=\rho\uu(x,t,\rho)$, with $\uu$ as in \eqref{eq:utrue}, satisfies
$\partial_\rho\ff = \uu+\rho\partial_\rho\uu$.
Since $\partial_\rho\uu \leq 0$ by construction, we have
$$
\disp\frac{\partial\ff(x,t,\rho)}{\partial\rho} \leq \max_{(x,t,\rho)}\uu(x,t,\rho) \quad \mbox{ for all } \quad (x,t,\rho)\in\R\times\R^+\times[0,\rhomax]. 
$$
Let us consider the single trajectory $p_\kappa(t)$, with $\kappa=\kappa(x,t)$ defined in \eqref{eq:kvicino}, then 
\begin{align*}
\uu(x,t,\rho) &= 
	\disp\chi(x-p_{\kappa}(t))\frac{2 \dot{p}_{\kappa}(t)u(\rho)}{\dot{p}_{\kappa}(t)+u(\rho)}+\big(1-\chi(x-p_{\kappa}(t))\big)u(\rho) \\
&\leq \max\{\tilde u(x,t,\rho),u(\rho)\},
\end{align*}
where 
\begin{equation*}
\tilde u(x,t,\rho) = \frac{2\dot{p}_\kappa(t) u(\rho)}{\dot{p}_\kappa(t)+u(\rho)} \quad \mbox{ and } \quad u(\rho) \leq \umax=u(0). 
\end{equation*}
By simple computations we have
$$
\tilde u(x,t,\rho) \leq \frac{2\dot{p}_{\kappa}(t)u(\rho)}{\dot{p}_\kappa(t)+u(\rho)} \leq \frac{\dot{p}_{\kappa}(t)+u(\rho)}{2} \leq \max\big\{\sup_{t}\dot{p}_{\kappa}(t), \umax\big\},
$$
which depends on the selected trajectory $p_\kappa(t)$ at each point $(x,t)$.
Therefore, for all $(x,t,\rho)\in\R\times\R^+\times[0,\rhomax]$
\begin{align*}
\frac{\partial\ff(x,t,\rho)}{\partial\rho} \leq \max\big\{\sup_{(x,t)}\dot{p}_{\kappa(x,t)}(t), \umax\big\}
\end{align*}
and thus we can assume the CFL condition
\begin{equation*}
\disp \frac{\dt}{\dx} \leq \disp\frac{1}{\max\big\{\sup_{(x,t)}\dot{p}_{\kappa(x,t)}(t), \umax\big\}}.
\end{equation*}
\end{proof}

\section*{Acknowledgments}
This work was partially funded by the company Autovie Venete S.p.A.
The authors acknowledge the Italian Ministry of University and Research (MUR) to support this research with funds coming from the project ``Innovative numerical methods for evolutionary partial differential equations and applications'' (PRIN Project 2017, No.\ 2017KKJP4X). The work was also carried out within the research project ``SMARTOUR: Intelligent Platform for Tourism'' (No.\ SCN\_00166) funded by the MUR with the Regional Development Fund of European Union (PON Research and Competitiveness 2007-2013).
The authors are members of the INdAM Research group GNCS.

\bibliographystyle{siam}
\bibliography{references_complete.bib}

\end{document}